\newcommand{\N}{{\mathbb N}}
\newcommand{\Z}{{\mathbb Z}}
\newcommand{\Q}{{\mathbb Q}}
\newcommand{\R}{{\mathbb R}}
\newcommand{\C}{{\mathbb C}}
\newcommand{\F}{{\mathcal F}}
\newcommand{\HQ}{{\mathbb H}}
\newcommand{\BQ}{{\mathbb B}}
\newcommand{\h}{{\mathcal{H}}}   % quaternion algebra
\newcommand{\SB}{{\rm SB}}   % iso ball
\newcommand{\vp}{\vspace{0.3cm}}
\newcommand{\bc}{\begin{center}}
\newcommand{\ec}{\end{center}}
\newcommand{\Fe}{{\cal F}}
\newcommand{\U}{{\cal U}}
\newcommand{\GL}{{\rm GL}}
\newcommand{\SL}{{\rm SL}}
\newcommand{\PSL}{{\rm PSL}}
\newcommand{\SU}{{\rm SU}}
\newcommand{\Iso}{{\rm ISO}}
\newcommand{\tr}{{\rm tr}}
\renewcommand{\O}{\mathcal{O}}
\newcommand{\qed}{\enspace\vrule  height6pt  width4pt  depth2pt}
\newenvironment{proof}{\par\noindent{\bf Proof.}}{$\qed$\par\bigskip}
\newtheorem{theorem}{Theorem}[section]
\newtheorem{lemma}[theorem]{Lemma}
\newtheorem{proposition}[theorem]{Proposition}
\newtheorem{remark}[theorem]{Remark}
\begin{document}

 \title{From the Poincar\'e Theorem to generators of the unit group of integral group rings of finite groups\thanks{Mathematics subject Classification Primary [16S34, 16U60]; Secondary [20C05].
Keywords and phrases: Units, Group Ring, Fundamental Domain, Generators.
\newline The first author is supported in part by Onderzoeksraad of
Vrije Universiteit Brussel and Fonds voor
Wetenschappelijk Onderzoek (Flanders). The second author was partially supported by CNPq and FAPESP-Brazil, while visiting the Vrije Universiteit Brussel.  The third  author is supported by Fonds voor
Wetenschappelijk Onderzoek (Flanders)-Belgium.  The fourth author  is supported by FAPESP and CNPq-Brazil.  The last author is supported by FAPESP
(Funda\c c\~ao  de Amparo \`a
Pesquisa do Estado de S\~ao Paulo), Proc. 2008/57930-1.
 }}

\author{E. Jespers \and S. O. Juriaans \and  A. Kiefer \and A. de A. e Silva \and A. C. Souza Filho}
\date{}

\maketitle

\begin{abstract}
We give an algorithm to determine finitely many generators for a subgroup of finite index in the unit group of an integral group ring $\Z G$
of a finite nilpotent group $G$, this provided the rational group algebra $\Q G$ does not have simple components that are division classical quaternion algebras or two-by-two matrices over a classical quaternion algebra with centre $\Q$. The main difficulty is to deal with orders in
quaternion algebras over the rationals or a quadratic imaginary extension of the rationals. In order to deal with these we
give a finite and easy implementable algorithm to compute a fundamental domain in the hyperbolic three space  $\HQ^3$ (respectively hyperbolic two space $\HQ^2$) for a discrete subgroup of $\PSL_2(\C)$ (respectively $\PSL_2(\R)$) of finite covolume.
Our results on group rings are a continuation of earlier work of Ritter and Sehgal,  Jespers and Leal.
 \end{abstract}

% etc
% \thanks is optional - remove next line if not needed
%\thanks{\emph{Present address:} Insert the address
%\date{Received: date / Revised version: date}
% The correct dates will be entered by the editor

\section{Introduction}

The unit group of an order in a finite dimensional semisimple rational algebra $A$  is an
important example of an arithmetic group. Hence it forms a fundamental topic of interest. Recall
that a subring $\O$  of $A$ is said to be an order if $\O$  is a finitely generated $\Z$-module that contains a
$\Q$-basis of $A$. Prominent examples of orders are group rings $RG$ of finite groups $G$ over the ring of
integers $\O_{K}$ of an algebraic number field $K$. The unit group $\U (RG)$ of $RG$ has received a lot of attention
and most of it has been given to the case $R = \Z$; for surveys we refer to \cite{klei-sur,sehmil,seh2,seh3}. It is well known
that the unit group $\U (\O)$  of an order $\O$ is a finitely presented group. However, only for very few
finite non abelian groups $G$ the unit group of $\Z G$ has been described, and even for fewer groups $G$ a
presentation of $\U (\Z G)$ has been obtained. Nevertheless, for many finite groups $G$ a specific finite set
$B$ of generators of a subgroup of finite index in $\Z G$ has been given.

Bass and Milnor in \cite{bas} showed that if $A$ is a finite abelian group then the so called Bass units generate a subgroup
of finite index in $\U (\Z A)$. Recall that a Bass unit in the integral group ring $\Z G$ of a finite group $G$ is a unit of
the form $(1+ g + \cdots + g^{i-1})^{m} + \left( \frac{1-i^{m}}{o(g)} \right)  \widehat{g}$,
where $g \in G,$ $1< i < o(g)$,  $(i,o(g))=1$ and
$m$ is the order of $\ i$ in $\U (\Z_{o (g)})$ (or
one takes a fixed multiple of $\ m$, for example $\varphi
(|G|)$), where  $\varphi$ is the Euler
$\varphi$-function and $\hat{g}=1+g+\cdots +g^{o(g)-1}$.

This result stimulated the search for new units that generate a subgroup of finite index in the unit group of the integral group ring
of a non-commutative finite group $G$.
Finding finitely many generators for $\U (\Z G)$  is one of the important problems in the field (see Problems 17 and 23 in \cite{seh2}).
For this purpose Ritter and Sehgal introduced the so called bicyclic units, these are the
unipotent units of the form $ 1+(1-g)h\widehat{g}$ and $1+\widehat{g}h (1-g) $, with $g,h \in G$.  In a series of papers, Ritter and Sehgal (see for example \cite{rit-seh2,rit-seh1,rit-seh3})
showed that for several classes of finite groups, including nilpotent groups of odd order, the group generated by both the Bass units and the bicyclic units
is of finite index in $\U (\Z G)$. A remarkable result as one only knows two types of generic units.

Jespers and Leal in \cite{jes-leal-manus} extended this result to a much wider class of finite  groups, only excluding  those finite groups $G$ for which
the Wedderburn decomposition of the rational group algebra $\Q G$ has certain simple components of degree one or two over a division algebra
and groups which have a non-abelian fixed point free epimorphic image.
In order to state the precise restrictions and also to clarify the reason for these restrictions, it is convenient to work in the more general
context of orders.

So, let $A$ be a semisimple finite dimensional rational algebra. If $e_{1}, \ldots , e_{n}$ denote the primitive
central idempotents of $A$ then $Ae_{i}=M_{n_i}(D_{i})$, with  $D_{i}$ a skew field and $n_i\in \mathbb{N}$, are the Wedderburn components
of $\ A$. If $\O_{i}$ is an order
in $D_{i}$ then $\prod\limits_{i}M_{n_i}(\O_{i})$ is an order   in $A$. Hence if $\U(\O )$ is the group of
units of an order $\O$ in $A$, then $\U (\O)$ contains a subgroup of finite index of the form $V_1 \times \cdots \times V_n $,
with each  $V_i$  a subgroup of finite index in $\GL_{n_i}(\O_{i}) = \U (M_{n_i}(\O_{i}))$.
Let  $\SL_{n_i}(\O_{i})$  denote  the group of matrices of reduced norm $1$.  For an ideal $\cal{Q}$ of $\O_{i}$,   let
 $E_{n_{i}}({\cal Q} )=\langle  I+qE_{lm}, q\in
{\cal Q}, \, 1\leq l,m\leq n_{i},\, l\neq m  \rangle \leq  \SL_{n_i}(\O_{i})$. The  celebrated
theorems of Bass-Milnor-Serre
\cite{bass-milnor-serre}, Liehl  \cite{Liehl1981}, Vaserstein
\cite{vaserstein}, Bak-Rehmann
\cite{bak-rehman} and   Venkataramana \cite{Venkataramana1994} state that if $n_{i}\geq 3$ or $n_{i}=2$ and $D_{i}$ is different from $\Q$,
a quadratic imaginary extension  of $\Q$ and a totally definite quaternion algebra with center $\Q$ then
$[\SL_{n_{i}}({\cal O}_{i}): E_{n_{i}}({\cal Q})] < \infty$ for any non-zero ideal ${\cal Q}$ of $\O_{i}$. For more details and background
we refer to \cite{rehman,seh2}. Recall that  $D_{i}$ is a totally definite  quaternion algebra if and only if $D_{i}$ is not
commutative and $\U(Z({\cal
O}_{i}))$ has finite index in $\U (\O_{i})$ (see \cite{KleinertTotDef,seh2}).
Because of these results we call an exceptional component of $A$ an epimorphic image of $A$ that is either a non-commutative division algebra other than a
totally definite quaternion algebra, or a two-by-two matrix ring over the rationals, a quadratic imaginary extension of the rationals or a
quaternion algebra
$\h (a,b,\Q )$ with $a$ and $b$ negative integers (see section 2 for the notation).

The proof of the  main result in \cite{jes-leal-manus} shows that if $G$ is a finite group so that $\Q G$ does not have exceptional simple components and
$G$ has non-commutative
fixed point free images then the group generated by the Bass and bicyclic units is of finite index in $\U (\Z G)$. Note that in \cite{jes-leal-manus}
one also excluded simple components of $\Q G$ of the type $M_{2}(D)$, with $D$ an arbitrary non-commutative division algebra; however because of the
 results in \cite{Venkataramana1994} one only has to exclude division algebras of the type $\h (a,b,\Q)$. It follows from the proof that
the bicyclic units generate a group so that it contains a subgroup of finite index in $1-e_{i}+\SL_{n_{i}}(\O_{i})$
for every simple component $\Q G e_{i}=M_{n_{i}}(D_{i})$
of $\Q G$ with $n_{i}\geq 2$ and $\O_{i}$ an order in the   non-commutative division algebra $D_{i}$. To show this one proves that, because $Ge_{i}$ is not fixed point free by assumption, there exists $g\in G$ so that
$f_{i}=\frac{1}{o(g)} \hat{g}e_{i}$ is a non-central idempotent of $\Q Ge_{i}$ and then one shows that the group generated by bicyclic units built on $g$ contains all matrices of the
type $1-e_{i} +q E_{kl}$ for $q$ in  a nonzero ideal of $\O_{i}$ and $1\leq k,l\leq n_{i}$ with $k\neq l$. It then follows from the above mentioned
theorem that this generates a subgroup of finite index in $\SL_{n_{i}}(\O_{i})$ (crucial is to have described  a non-
central idempotent $f_{i}\in \Q Ge_{i}$ and consider all the generators of the type $1+n_{f_{i}}^{2}(1-f_{i}) h f_{i},
\; 1+n_{f_{i}}^{2}f_{i}h(1-f_{i})$ where $h\in G$ and $n_{f_{i}}$ is a positive integer so that $n_{f_{i}}f_{i}\in \Z 
G$). We identify $\O_{i}$ with the scalar matrices with entries in $\O_{i}$. It is well known that  $\U 
(\mathcal{Z} (O_{i})) \, \SL_{n_{i}}(\O_{i}) $ is of finite index in $\GL_{n_{i}} (\O_{i} )$.
The Bass cyclic units (together with the bicylic units) are then ``used'' to
generate a subgroup of finite index in $1-e_{i} +\U (\mathcal{Z} (\O_{i}))$ (note that Bass units are not necessarily central). The proof for this  makes use of a result of Bass \cite{bas} on $K_{1}(\Z G)$. Of course, ultimately this makes use of the Dirichlet Unit theorem on the description of the unit group of the ring of integers $\O_{K}$ in a number field $K$. In \cite{jes-par-seh} the use of the Bass units is made
very clear in case $G$ is a finite nilpotent group. Indeed, in this case, it is shown that the group generated by the Bass units contains a subgroup of finite index in $\mathcal{Z} (\U (\Z G))$, i.e. it contains a subgroup of finite index in $1-e_{i}+e_{i}\U (\mathcal{Z} (\O_{i}))$ for every $i$.
To state the precise result, we recall some notation. We denote by $Z_{i}$ the $i$-th centre of $G$. For  $g\in G$ and a Bass
unit $b\in \Z \langle g\rangle $ put $b_{(1)}=b$, and, for $2\leq i\leq n$, put $b_{(i)}=\prod_{x\in Z_{i}} b_{(i-1)}^{x}$, where
$\alpha^{x}=x^{-1}\alpha x$ for $\alpha \in \Z G$. If $n$ is the nilpotency class of $G$, then the group $\langle b_{(n)}\ |\ b \ \mbox{a Bass cyclic unit}\rangle$ is of finite index in $\mathcal{Z} (\U (\Z G))$.

If one considers group rings $RG$ of finite groups over say a commutative order $R$ in a number field $K$ that is larger than $\Z$, then often
the group algebra $KG$ has less exceptional components (or none at all if for example $K=\Z [\xi ]$ with $\xi$ a primitive root of unity
of order $G$) and one can also apply the method explained to obtain that the group generated by the (generalized) Bass and bicyclic units
generates a subgroup of finite index in $\U (RG)$. We refer the reader to \cite{jes-leal-manus} for examples and more details.
Note that in  \cite{jes-leal-deg12} a  method is described to obtain generators for a subgroup of finite index in
$\U (\Z G)$ for an arbitrary nilpotent finite group. These generators are determined  in function of knowing generators of unit groups $\U (\Z H)$ of
some special epimorphic images $H$  of $G$ that determine exceptional simple components of $\Q G$. However, ultimately one still has to
deal with unit groups of orders in exceptional simple components.

So the remaining problem for describing a finite set of generators, up to finite index,
of the unit group of $\Z G$ of an arbitrary finite group $G$, is  dealing with orders in exceptional components and with groups $G$ that
are non-abelian and fixed point free. The latter groups are precisely  the Frobenius complements  (see \cite[Theorem 18.1.iv]{passman-perm}).
In particular, the non-abelian nilpotent finite fixed point free groups are
$F=Q_{2^{m}}\times C_{n}$, with  $Q_{2^{m}}$  a generalized quaternion group of order $2^{m}\geq 8$ and  $C_{n}$  a cyclic group of odd order $n$.
Suppose now that $G$ is a nilpotent finite group so that $Ge=F$.
By $\xi_{k}$ we denote a primitive root of unity of order $k$.
If $n=1$ then
$\Q G e =\h (-1,-1, \Q(\xi_{2^{m-1}}+\xi_{2^{m-1}}^{-1}))$, a totally definite quaternion algebra, and hence the central units of $\Z G$ contain a subgroup of finite index in the unit group of an order in this ring. If   $n$ is an odd positive integer so that the multiplicative order of $2$ modulo $n$ is odd, then $\Q Fe =\h (-1,-1, \Q(\xi_{2^{m-1}}+\xi_{2{m-1}}^{-1}, \xi_{n}) ) $ is a division algebra, otherwise it is a matrix algebra that is only exceptional if $m=3$ and $n=3$. In the latter case one can describe non-central idempotents and hence units that generate a subgroup of finite index in an order of the algebra.
For details on this we refer to \cite{jes-olt-rio} and the last section.
A minimal example of the former case is $G=Q_{8}\times C_{7}$. Here  $\Q G$ has only one non-commutative simple component, the division algebra
$\h (-1,-1,\Q(\xi_{7}))$. So far no methods have been discovered in order to deal with such exceptional components, i.e. to deal with components that
are non-commutative division algebras that are not totally definite quaternion algebras (see \cite{kle-boo}). However, a first step  was made in \cite{jesetall}, where an  algorithm  is given to compute the unit group of the  order $\h (-1,-1,\O_{K})$  in a division  algebra $\h (-1,-1, K)$ over an imaginary quadratic field extension $K$ of the rationals.  The algorithm computes a fundamental polyhedron for the cocompact discrete group $\U(\h (-1,-1, \O_{K}))$ that acts on the hyperbolic three space (and thus determines generators by making use of the Poincar\'{e} theorem). Next the algorithm is  applied to the unit group of $\h (-1,-1, \Z [\frac{1+\sqrt{-7}}{2}])$  providing  the first example of a description of  a finite set of generators of the unit group of a  group ring $RG$ with an exceptional component that is a division algebra which is not a totally definite quaternion algebra.  Note that $\h (-1,-1,\Q(\sqrt{-7})) \subseteq \h (-1,-1,\Q (\xi_{7}))$. Subsequently, in \cite{jpsf}, an explicit method was given to construct units in $\h(\mathbb{Z}[\frac{1+\sqrt{-d}}{2}])$ for all positive $d\equiv 7\ \mbox{mod}\ 8$ and it was shown that a unit group of any order of such algebras is a hyperbolic group.

In \cite{dooms-jespers-konovalov}, Dooms, Jespers and Konovalov introduced a method (also by computing a fundamental polyhedron of a discrete group
of finite covolume) to deal with exceptional simple components of the type
$M_2(\Q)$. New generators are introduced, using Farey
symbols, which are in one to one correspondence with fundamental polygons
of congruence subgroups of $PSL_2(\Z)$.

In \cite{jeskiefjuretall} Jespers, Juriaans, Kiefer,  de Andrade e Silva  and Souza Filho,  obtained explicit formulas for the bisectors in the Poincar\'e Theory of hyperbolic $2$ and $3$ space. These formulas, and their consequences, were used to revisit the work of  Swan on Bianchi groups, give an easy verifiable criteria for non-compactness, a geometric criteria to determine the type  of a M�bius transformation and will here be used  to revisit the works of \cite{jesetall, katok, riley}.

In this paper we give a method that deals with all exceptional simple components that are matrix algebras over fields and we also can handle division algebras that are quaternion algebras with centre either the rationals or a quadratic imaginary extension of the rationals.
We do this via actions on hyperbolic spaces. Describing generators and relations of groups acting on hyperbolic spaces was started in
the nineteenth century. The main difficulty one encounters is the construction of a fundamental domain. This problem was considered by  Ford, Poincar\'{e}, Serre, Swan, Thurston and many others. Only in the case of a Ford domain explicit formulas are known. Computer aided methods also exist.  For   Fuchsian groups we refer to  \cite{johansson,katok}, for  Bianchi  groups we refer to  \cite{riley} and for  cocompact groups   we refer to \cite{jesetall}.
Our method is based on the authors' work \cite{jeskiefjuretall}, where   explicit descriptions of the bisectors in the Poincar\'e Theory (see \cite{ratcliffe}) in $\HQ^n$, with $n\in \{2,3\}$, are given. For convenience and completeness' sake, we recall in Section 2 the necessary background on the Poincar\'{e} theorem and in the first part of Section 3 we recall some formulas and results proved in \cite{jeskiefjuretall}. Since the latter mainly deals with the geometry of fundamental domains and to make the present paper independent and complete, we will reprove these results. Equipped with these tools, we turn to describing generators, up to finite index, of  a discrete group $\Gamma$ acting on a hyperbolic $2$ or $3$-space  and having finite covolume (coarea).  In Section 3 we describe our algorithm, called the Dirichlet  Algorithm of Finite Covolume (DAFC for short), to obtain generators for a subgroup of finite index of $\Gamma$. The algorithm DAFC is very easy to implement and reduces  much the computational time of earlier known examples.  In Section 4 we give several examples.
First we treat examples coming from division algebras and hence ameliorate \cite{jesetall}. Next we apply our method to matrix algebras and hence show that our method
does not require the condition of cocompactness, which was required in \cite{jesetall}. In Section 5 we then give applications to group rings and we focus on integral
group rings of finite nilpotent groups although applications for arbitrary finite groups also can be given. 
%It follows that for almost all finite groups $G$ one obtains finitely many generators of a subgroup of finite index in $\U (\Z G)$.

%*************************Background***************************************************

\section{Preliminaries}
\label{secback}

In this section we introduce the notation and recall some basic facts on hyperbolic spaces and quaternion algebras. Standard references on hyperbolic geometry are \cite{beardon, bridson,
elstrodt, gromov, mac, ratcliffe}. Let $\HQ^n$ (respectively $\BQ^n$) denote the upper half space (respectively the ball) model of hyperbolic $n$-space. So $\HQ^{3}= \C \times ]0, \infty [$ and we shall often think of $\ \HQ^{3}$ as a subset of the classical real quaternion algebra $\h=\h(-1,-1,{\R})$
by identifying  $\HQ^{3}$ with the subset  $  \{z+rj
\in \h : z \in \C, r \in \R^{+}\}\subseteq \h$. The
ball model $\BQ^{3}$ may be identified in the same way  with
$\{z+rj\in \C +\R j\ | \  |z|^2+r^2<1\}\subseteq
\h$.  Denote by $\mbox{Iso} (\HQ^{3})$ the group of isometries of $\ \HQ^{3}$. The group of
orientation preserving isometries is denoted by $\mbox{Iso}^{+}(\HQ^{3})$.
It is well known (see for instance \cite{elstrodt}) that $\mbox{Iso}^{+}(\HQ^{3})$ is isomorphic with
$\PSL_2(\C)$ and  $\mbox{Iso}(\HQ^{3})$ is isomorphic with
$\PSL_2(\C)\times C_2$, where by $C_n$ we denote the cyclic group of order $n$. More concretely, the action of $\ \PSL_2(\C)$  on  $\HQ^{3}$  is given
by
\begin{eqnarray*}
\begin{pmatrix}
a & b\\
c & d
\end{pmatrix} (P) = (aP+b)(cP+d)^{-1},
\end{eqnarray*}
where $(aP+b)(cP+d)^{-1}$  is evaluated in the algebra $\h$. Explicitly, if $P=z+rj$ and $\gamma = \begin{pmatrix}
a & b\\
c & d
\end{pmatrix}$ then $(aP+b)(cP+d)^{-1} = \frac{(aP+b)(\overline{P}\overline{c}+\overline{d})}{|cz+d|^{2}+|c|^{2}r^{2}}$, where $\overline{}$ denotes the classical involution on the algebra $\h$. Now the numerator may be written as $(az+b)(\overline{c}\overline{z}+\overline{d})+a\overline{c}r^{2}+arj(\overline{c}\overline{z}+\overline{d})-(az+b)rj\overline{c}$, where the last two terms may be written as $a(cz+d)rj-(az+b)crj$. Thus the $k$-component disappears and we obtain
\begin{eqnarray*}
\gamma (P)=\frac{(az+b)(\overline{c}\overline{z}+\overline{d})+a
\overline{c}r^{2}}{|cz+d|^{2}+|c|^{2}r^{2}} + (\frac{r}{|cz+d|^{2}+|c|^{2}r^{2}})j.
\end{eqnarray*}
This action may be extended to $\widehat{\HQ}^3=\HQ^3 \cup \partial \HQ^3 \cup \lbrace \infty \rbrace$ in the following way: if $P \in \partial \HQ^3$ and $P \neq -\frac{d}{c}$, then $\begin{pmatrix} a & b \\ c & d \end{pmatrix}(P)=(aP+b)(cP+d)^{-1}$, where the latter is simply evaluated in $\C$, if $P=-\frac{d}{c}$, then $\begin{pmatrix} a & b \\ c & d \end{pmatrix}(P)=\infty$ and $\begin{pmatrix} a & b \\ c & d \end{pmatrix}(\infty)=\frac{a}{c}$.
\begin{remark}
Note that throughout the article we write $a=a(\gamma )$, $b=b(\gamma )$, $c=c(\gamma )$ and $d=d(\gamma )$, for the entries of $\gamma =\begin{pmatrix}
a & b\\
c & d
\end{pmatrix}\in M_2(\C)$, when it is necessary to stress the dependence of the entries on the matrix $\gamma$.
\end{remark}

We will now analyse the orientation preserving isometries of the ball model $\BQ^3$. Therefore let $u=u_0+u_1i+u_2j+u_3k \in \h$ and define $\overline{u}$ to be $u_0-u_1i-u_2j-u_3k$, the conjugate of $u$. Moreover let $u'=u_0-u_1i-u_2j+u_3k$ and $u^{*}=u_0+u_1i+u_2j-u_3k$.
Define
\begin{equation}
\SB_{2} (\h )=\left\{
\left(
\begin{array}{cc}
a & b \\ c & d
\end{array}
\right) \in M_2(\h) |\  d=a^{\prime},\;  b=c^{\prime},\;  a\overline{a}-c\overline{c}=1\right\}.
\end{equation}
Note that if $f= \left(
\begin{array}{rr}
a & c' \\ c & a'
\end{array}
\right) \in \SB_{2} (\h )$ then $f^{-1}=  \begin{pmatrix}
\overline{a} & -\overline{c}\\
-c^* & a^*
\end{pmatrix}$. The following proposition gives the relation between the upper half space model $\HQ^3$ and the ball model $\BQ^3$ and gives the group of orientation preserving isometries of the latter space, $\mbox{Iso}^{+}
(\BQ^3 )$.

\begin{proposition}\label{propels}\cite[Proposition I.2.3]{elstrodt}
\begin{enumerate}[(i)]
\item For $P \in \HQ^3$, the quaternion $-jP+1$ is invertible in $\h$ and the  map $\eta_0 :
\HQ^3\longrightarrow \BQ^3$, given by
$ \eta_0(P)=(P-j)(-jP+1)^{-1}$, is an isometry. More precisely $\eta_0=\mu\pi$, where $\pi$ is the reflection in the border of $\HQ^3$ and $\mu$ is the reflection in the Euclidean sphere with centre $j$ and radius $\sqrt{2}$.
\item Let $g=\frac{1}{\sqrt{2}}\left(
\begin{array}{ll}
1 & j \\ j & 1
\end{array}
\right) \in M_{2} (\h )$. The map
 $\Psi : \SL_2(\C) \rightarrow SB_{2}(\h )$ given by  $\Psi ( \gamma ) =
\overline{g}\gamma g $
is a group   isomorphism.
\item For $u \in \BQ^3$ and $f=\begin{pmatrix} a & c' \\ c & a' \end{pmatrix} \in SB_2(\h)$ the quaternion $cu+a'$ is invertible in $\h$ and the transformations $f:\BQ^3 \longrightarrow \BQ^3$, defined by $f(u)=(au+c')(cu+a')^{-1}$ are isometries of $\BQ^3$ and define an action of $SB_2(\h)$ on $\BQ^3$. Again this action may be extended to the closure of $\BQ^3$, which we denote by $\overline{\BQ^3}$.
\item The group $\mbox{Iso}^{+}(\BQ^3 )$ is isomorphic with $SB_2(\h)/ \lbrace 1,-1 \rbrace$.
\item The map $\eta_0$ is equivariant with respect to $\Psi$, that is $\eta_{0} (MP)=\psi (M) \eta_{0}(P)$, for $P\in \HQ^{3}$ and $M\in \SL_2(\C)$ .
\end{enumerate}
\end{proposition}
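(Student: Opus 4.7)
The plan is to verify the five assertions by direct quaternionic calculation, organised around the unified Möbius action $\begin{pmatrix} \alpha & \beta \\ \gamma & \delta \end{pmatrix} \cdot q = (\alpha q + \beta)(\gamma q + \delta)^{-1}$ of $M_{2}(\h)$ on $\h \cup \{\infty\}$, so that both the $\PSL_{2}(\C)$-action on $\HQ^{3}$ and the $SB_{2}(\h)$-action on $\BQ^{3}$ arise as restrictions of one formula. I would first dispose of (ii), which is purely algebraic. A direct matrix multiplication using $j^{2}=-1$ gives $g\bar g = I$, hence $\bar g = g^{-1}$ in $M_{2}(\h)$ and $\Psi(\gamma)=g^{-1}\gamma g$ is tautologically a group homomorphism. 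Writing $\gamma=\begin{pmatrix}a&b\\c&d\end{pmatrix}\in\SL_{2}(\C)$, expanding the four entries of $\bar g\gamma g$ with the rule $wj=j\bar w$ for $w\in\C$, and splitting each entry in the form $z_{1}+z_{2}j$ with $z_{i}\in\C$, one checks that the $(2,1)$-entry equals $(1,2)^{\prime}$ and the $(2,2)$-entry equals $(1,1)^{\prime}$. The norm identity $a\bar a-c\bar c=1$ (for the new entries) reduces, via $u\bar u=|z_{1}|^{2}+|z_{2}|^{2}$ and $|x+y|^{2}-|x-y|^{2}=4\,\mathrm{Re}(x\bar y)$, to $\mathrm{Re}(ad-bc)=1$, which holds since $\det\gamma=1$. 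Bijectivity is witnessed by the explicit inverse $\Phi(f)=gf\bar g$, whose entries fall in $\C$ with unit determinant by the same bookkeeping.

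For (i), with $P=z+rj$ and $r>0$ one finds $-jP+1=(1+r)-\bar z\,j$, whose real part $1+r$ is nonzero, so the quaternion is invertible. Direct multiplication yields
\begin{equation*}
\eta_{0}(P)=\frac{2z+(|z|^{2}+r^{2}-1)\,j}{(1+r)^{2}+|z|^{2}}
\end{equation*}
together with the key identity
\begin{equation*}
1-|\eta_{0}(P)|^{2}=\frac{4r}{(1+r)^{2}+|z|^{2}}>0,
\end{equation*}
so $\eta_{0}(P)\in\BQ^{3}$. The identification $\eta_{0}=\mu\pi$ is obtained by applying $\pi(P)=z-rj$ and the Euclidean inversion $\mu(Q)=j+2(Q-j)/|Q-j|^{2}$, and simplifying to the same explicit expression. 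The isometry claim then reduces to showing $\eta_{0}^{\ast}\bigl(4|du|^{2}/(1-|u|^{2})^{2}\bigr)=|dP|^{2}/r^{2}$; using the displayed identity for $1-|\eta_{0}(P)|^{2}$ together with the conformal scaling $|d\eta_{0}(P)|^{2}=4|dP|^{2}/((1+r)^{2}+|z|^{2})^{2}$, which is forced on $\eta_{0}=\mu\pi$ as a product of two conformal Möbius maps of extended Euclidean $3$-space, this is an elementary algebraic identity.

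Equivariance (v) becomes transparent once one recognises that $\eta_{0}(P)=\bar g\cdot P$ in the Möbius sense: the $1/\sqrt{2}$ factors cancel and the formula collapses to $(P-j)(-jP+1)^{-1}$. Noting $\Psi(M)\bar g=\bar g M g\bar g=\bar g M$ (by $g\bar g=I$), associativity of the Möbius action gives
\begin{equation*}
\eta_{0}(M\cdot P)=\bar g\cdot(M\cdot P)=(\bar g M)\cdot P=(\Psi(M)\bar g)\cdot P=\Psi(M)\cdot(\bar g\cdot P)=\Psi(M)\cdot\eta_{0}(P).
\end{equation*}
For (iii), writing any $f\in SB_{2}(\h)$ as $\Psi(M)$ with $M\in\SL_{2}(\C)$ and invoking (v) yields $f\cdot u=\eta_{0}(M\cdot\eta_{0}^{-1}(u))\in\BQ^{3}$, which at once forces $cu+a^{\prime}\ne 0$ and shows that $(au+c^{\prime})(cu+a^{\prime})^{-1}$ defines an isometry of $\BQ^{3}$; the group-action axiom is inherited from that of $\SL_{2}(\C)$ on $\HQ^{3}$ together with $\Psi$ being a homomorphism. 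Finally, the known identification $\mathrm{Iso}^{+}(\HQ^{3})\cong\PSL_{2}(\C)$ recalled in the preliminaries combined with conjugation by $\eta_{0}$ yields $\mathrm{Iso}^{+}(\BQ^{3})\cong\PSL_{2}(\C)\cong SB_{2}(\h)/\{\pm I\}$, proving (iv). The main obstacle throughout is notational rather than conceptual: keeping the three involutions $\overline{\phantom{x}}$, ${}^{\prime}$, ${}^{\ast}$ and the commutation rule $wj=j\bar w$ straight across all expansions, so that $SB_{2}(\h)$ membership, the invertibility of $cu+a^{\prime}$, and the Möbius associativities all line up correctly.
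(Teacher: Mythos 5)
Your proposal is correct, but note that the paper does not actually prove this proposition: it is imported wholesale from the cited reference (Elstrodt--Grunewald--Mennicke, Proposition I.2.3), so there is no in-paper argument to compare against. What you have written is essentially the standard proof from that source, and the computations you display check out: $g\bar g=I$ so that $\Psi$ is a conjugation; the expansion $-jP+1=(1+r)-\bar z j$ and the identities $\eta_0(P)=\bigl(2z+(|z|^2+r^2-1)j\bigr)/\bigl((1+r)^2+|z|^2\bigr)$ and $1-|\eta_0(P)|^2=4r/\bigl((1+r)^2+|z|^2\bigr)$ are right, as is the factorization $\eta_0=\mu\pi$ and the resulting conformal factor $2/\bigl((1+r)^2+|z|^2\bigr)$; and the reduction of $A\bar A-C\bar C=1$ to $\mathrm{Re}(ad-bc)=1$ is exactly the required bookkeeping. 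Two points are asserted rather than carried out and deserve a line each if you write this up: (a) surjectivity of $\Psi$, i.e.\ that $gf\bar g$ really has entries in $\C$ for every $f\in \SB_2(\h)$ (injectivity is free from conjugation, but the image being all of $\SB_2(\h)$ is a genuine computation); and (b) the associativity $(AB)\cdot q=A\cdot(B\cdot q)$ of the quaternionic M\"obius action, which does hold via $(XY^{-1})(ZY^{-1})^{-1}=XZ^{-1}$ but only once one knows all intermediate denominators are invertible --- and it is precisely this factorization of denominators that yields the invertibility of $cu+a'$ in (iii), so the logic there is slightly circular as phrased and should be ordered carefully. Neither issue is a genuine gap; both are routine.
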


Note that item (i) clearly shows that the map $\eta_0$ is a M\"obius transformation. Also note that an explicit formula for  Proposition \ref{propels} (ii) is
\begin{equation}\label{psigamma}
\Psi (\gamma)= \frac{1}{2}
\begin{pmatrix}
a +\overline{d} + (b-\overline{c})j&
b+\overline{c}+ (a-\overline{d})j \\
c+\overline{b}+(d-\overline{a})j & \overline{a}+d
+ (c-\overline{b})j ,
\end{pmatrix}
\end{equation} for $\gamma=\begin{pmatrix} a & b \\ c & d \end{pmatrix} \in \SL_2(\C)$. Hence $\Vert \Psi(\gamma) \Vert^2 = \Vert \gamma \Vert^2$, where the matrix norm $\Vert \gamma \Vert^2$ is defined as $\vert a \vert^2+\vert b \vert^2+\vert c \vert^2+\vert d \vert^2$.

%***********

The theory above shows how discrete subgroups of $\SL_2(\C)$ act discontinuously on $\HQ^3$ (respectively $\HQ^2)$.
Recall from the introduction that, for our purposes, we mainly are interested in such discrete  subgroups  that are determined by orders in the exceptional components
of rational group algebras $\Q G$, and the aim is to find explicit generators for them. We now make the link between quaternion algebras over number fields $K$ and a discontinuous action on hyperbolic space.
Let $K$ be an algebraic  number field and let
$\mathfrak o_K$ be an order in $K$. For $a$ and $b$ nonzero elements of $\ K$, we denote by
$\h(a,b,K)$  the generalized
quaternion algebra over $K$, that is
 $\h(a,b, K )=K [i,\,j:i^{2}=a,\,j^{2}=b,\,ji=-ij]$. In the particular case in which $a=b=-1$, we simply denote this algebra as $\h (K)$ and as $\h$ if furthermore $K=\R$. The set $\{1,\,i,\,j,\,k=i j \}$ is
an additive $K$-basis of $\ \h(a,b,K)$.
If $a,b\in \mathfrak{o}_{K}$ then we denote by $\h (a,\,b, \mathfrak{o}_{K})={\mathfrak{o}_K }
 +{\mathfrak{o}_{K}}i+ {\mathfrak{o}_{K}}j+{\mathfrak{o}_{K}}k$, a subring of $\h(a,b,K)$.   By $N$ we denote the usual norm on $\h (a,b, K)$, that is $N(u_{0}+u_{1}i +u_{2}j+u_{3}k) =u_{0}^{2}-au_{1}^{2}-bu_{2}^{2}+abu_{3}^{2} $. In the special case of $\h$ we simply denote $N(x)$ as $\vert x \vert^2$.
Denote by  $\SL_{1}(\h(a,\,b, \mathfrak{o}_K))$  the multiplicative group $\{x \in \h(a,b,\mathfrak {o}_K)\ |\ N(x)=1\}$. It is well known that the unit group $\U(\h(a,b,\mathfrak{o}_k))$ of $\h(a,b,\mathfrak{o}_k)$ is commensurable (i.e. has a common subgroup of finite index) with $\U(\mathfrak{o}_k)\cdot \SL_1(\h(a,b,\mathfrak{o}_k))$. Since the Dirichlet unit theorem deals with the structure of $\U(\mathfrak{o}_k)$, and as explained in the introduction the central units of integral group rings are "under control", we need to investigate $\SL_1(\h(a,b,\mathfrak{o}_k))$.

Similar as in the case of a classical quaternion algebra, given $u=u_0+u_1i+u_2j+u_3k\in \h(a,b,K)$, let $\overline{u}=
u_0-u_1i-u_2j-u_3k$,
$u^{\prime}= k^{-1}u k$ and let $u^{*}=
u_0+u_1i+u_2j-u_3k,$. The mapping $u\mapsto u'$ defines an
algebra isomorphism of $\h(a,b,K)$ and both $u\mapsto u^{*}$ and $ \ u\mapsto \overline{u}$ define  involutions of $\ \h(a,b,K)$.  The map
\begin{eqnarray*}
\h(a,b,K) \rightarrow M_2(\C)
\end{eqnarray*}
given by
\begin{eqnarray*}
u=u_0+u_1i+u_2j+u_3k\mapsto \gamma_u= \begin{pmatrix}
u_0+u_1\sqrt{a} & u_2\sqrt{b}+u_3\sqrt{ab}\\
u_2\sqrt{b}-u_3\sqrt{ab} & u_0-u_1\sqrt{a}
\end{pmatrix},
\end{eqnarray*}
is a monomorphism of algebras (see \cite[Chapter X]{elstrodt}).
The group $\SL_{1}(\h(a,\,b, \mathfrak{o}_K))$ acts as orientation preserving isometries  on $\HQ^3$ via this embedding $u\mapsto \gamma_u$. The kernel of this action is  ${\rm{I}}(a,b,K)=\SL_{1}(\h(a,\,b, \mathfrak{o}_K))\cap \U (\mathfrak{o}_K)$, a finite group. We denote by $\PSL_{1}(\h(a,\,b, \mathfrak{o}_K)) =\SL_{1}(\h(a,\,b, \mathfrak{o}_K))/{\rm{I}}(a,b,K)$.  Hence finding a set of generators for (a subgroup of finite index in) $\SL_{1}(\h(a,\,b, \mathfrak{o}_K))$ amounts to finding  a set of generators for $\PSL_{1}(\h(a,\,b, \mathfrak{o}_K))$.

Let $\Gamma$ be a discrete subgroup of $PSL_2(\C)$. The Poincar\'e method  can be used to give a presentation of $\Gamma$ (see for example \cite[Chapter 6]{ratcliffe}). In particular the following corollary of Poincar\'e's theorem gives generators for $\Gamma$. Recall that a convex polyhedron in a metric space is a nonempty, closed, convex subset of the space such that the collection of its sides is locally finite, where a side is defined as a maximal convex subset of the border of the polyhedron.

\begin{theorem}\label{Poincare}\cite[Theorem 6.8.3]{ratcliffe}
Let $\F$ be a convex, fundamental polyhedron for a discrete group $\Gamma$. Moreover suppose that for every side $S$ of $\F$, there exists an element $\gamma \in \Gamma$, such that $S=\F \cap \gamma(\F)$, i.e. $\F$ is exact. Then $\Gamma$ is generated by the set
\begin{equation*}
\lbrace \gamma \in \Gamma \mid \F \cap \gamma(\F) \textrm{ is a side of } \F \rbrace.
\end{equation*}
The elements $\gamma$ of the generating set are called side-pairing transformations.
\end{theorem}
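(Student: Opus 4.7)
The plan is to let $\Gamma_0$ denote the subgroup of $\Gamma$ generated by the side-pairing transformations and to prove $\Gamma_0 = \Gamma$. The strategy is a classical tiling/connectedness argument: form the union
\[
\mathcal{U} \;=\; \bigcup_{\beta \in \Gamma_0} \beta(\F)
\]
of all $\Gamma_0$-translates of the fundamental polyhedron, and show that $\mathcal{U}$ is both open and closed in the ambient hyperbolic space. Since $\HQ^n$ is connected and $\F \subseteq \mathcal{U}$, this forces $\mathcal{U} = \HQ^n$, from which the theorem follows.

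Showing $\mathcal{U}$ is open requires a local analysis around each $x \in \mathcal{U}$. If $x$ lies in the interior of some tile $\beta(\F)$, openness is immediate. If $x$ lies in the relative interior of a side $S$ of $\beta(\F)$, then $\beta^{-1}(S)$ is a side of $\F$, and the exactness hypothesis supplies a side-pairing $\sigma \in \Gamma_0$ such that $\beta^{-1}(S) = \F \cap \sigma(\F)$. A small neighborhood of $x$ is then contained in $\beta(\F) \cup (\beta\sigma^{-1})(\F) \subseteq \mathcal{U}$. The remaining and delicate case is when $x$ lies on a lower-dimensional face (edge, vertex, or higher-codimension facet): here one uses local finiteness of the tiling (guaranteed because $\F$ is convex and $\Gamma$ is discrete) to enumerate the finitely many tiles meeting $x$, and then walks around $x$ by composing successive side-pairings, verifying at each step that the next adjacent tile is also a $\Gamma_0$-translate of $\F$.

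To show $\mathcal{U}$ is closed, one proves that its complement is a union of tiles disjoint from $\mathcal{U}$, and that the same local argument makes this complement open. Concretely, if $y \in \HQ^n \setminus \mathcal{U}$, then $y \in \alpha(\F)$ for some $\alpha \in \Gamma \setminus \Gamma_0$; the same walk-around-faces argument shows that every tile meeting a small neighborhood of $y$ has the form $\alpha'(\F)$ with $\alpha' \notin \Gamma_0$ (otherwise $\alpha' = \alpha \sigma^{\pm 1}$ for some composition $\sigma$ of side-pairings, placing $\alpha \in \Gamma_0$, a contradiction). Connectedness of $\HQ^n$ then yields $\mathcal{U} = \HQ^n$. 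To finish, given any $\alpha \in \Gamma$, choose a point $x$ in the interior of $\alpha(\F)$; by the previous step $x \in \beta(\F)$ for some $\beta \in \Gamma_0$, and disjointness of the interiors of distinct $\Gamma$-translates of $\F$ (a property of fundamental polyhedra) forces $\alpha = \beta \in \Gamma_0$.

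The main obstacle is the face-cycling argument in the codimension $\geq 2$ case: one must verify that traversing the finite cycle of tiles incident to an edge or vertex by side-pairings stays inside $\Gamma_0$. In the \emph{presentation} version of the Poincaré theorem this forces the well-known edge-cycle relations; here, however, only generation is claimed, so it is enough to know that each such cycle closes up at all, which follows from local finiteness and the fact that $\F$ is an exact convex fundamental polyhedron. Discreteness of $\Gamma$ ensures the cycles are finite and that distinct translates have disjoint interiors, which is the only input used beyond the topological connectedness of $\HQ^n$.
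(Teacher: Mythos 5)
The paper does not prove this statement at all: it is quoted verbatim from Ratcliffe \cite[Theorem 6.8.3]{ratcliffe} and used as a black box. Your open--closed tiling argument is essentially the standard (and Ratcliffe's own) proof --- let $\Gamma_0$ be generated by the side-pairings, show $\bigcup_{\beta\in\Gamma_0}\beta(\F)$ and its complementary union of tiles are both closed (local finiteness) and disjoint (the chain-of-sides lemma at points of codimension $\ge 2$), and conclude by connectedness --- so it is correct in approach; the only place where full rigour is still owed is the step you yourself flag, namely that any two tiles sharing a point on a face of codimension $\ge 2$ are joined by a finite chain of tiles through that point, consecutive ones sharing a side, which follows because the locally finite union of codimension-$\ge 2$ faces does not locally disconnect $\HQ^n$.
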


Recall that a set $\F$ is a fundamental domain for a discrete group $\Gamma$ acting on a metric space $X$ if $\F$ is closed and connected, the members of $\lbrace \gamma(\F)^{\circ} \mid \gamma \in \Gamma \rbrace$ are mutually disjoint and  $X=\bigcup_{\gamma \in \Gamma} \gamma(\F)$. A polyhedron that is a fundamental domain is called a fundamental polyhedron.

Recall that the hyperbolic distance $\rho$ in $\HQ^3$ is determined by
\begin{equation}
\cosh \rho(P,P')= \delta(P,P') = 1+\frac{d(P,P')^2}{2rr'},
\end{equation}
where $d$ is the Euclidean distance and $P=z+rj$ and $P'=z'+r'j$
are two elements of $\ \HQ^3$. We now recall the definition of Dirichlet fundamental polyhedron for a discrete subgroup $\Gamma$ of $\PSL_2(\C)$. Let $\gamma \in \Gamma$ and $P \in \HQ^3$ a point which is not fixed by $\gamma$. Then let
\begin{equation}\label{defbis}
D_{\gamma}(P)=\{u\in \HQ^3 \mid \rho(P,u)\leq \rho (u,\gamma (P))\},
\end{equation}
the half space containing $P$.
The bisector of $P$ and $\gamma(P)$ is the border of $D_{\gamma}(P)$, that is the set $\{u\in \HQ^3 \mid \rho(P,u)= \rho (u,\gamma (P))\}$. If $P \in \HQ^3$ is a point which is not fixed by any non-trivial element of $\Gamma$, then
\begin{equation*}
\F = \cap _{1 \neq \gamma \in \Gamma} D_{\gamma}(P)
\end{equation*}
is known as a Dirichlet fundamental polyhedron of $\Gamma$ with centre $P$. It is well known that the Dirichlet polyhedron is convex, exact and locally finite. If $\Gamma_P$, the stabilizer of $P$ in $\Gamma$, is not trivial and if $\F_P$ is a fundamental domain for the group $\Gamma_p$, then
\begin{equation}\label{fundwithstab}
\F = \F_P \cap ( \cap_{\gamma \in \Gamma \setminus \Gamma_P} D_{\gamma}(P))
\end{equation}
is a fundamental domain of $\Gamma$. A proof of this can be found for example in \cite[Proof of Proposition 3.2]{jesetall}.  So theoretically one has a method to compute generators for $\Gamma$. However for concrete classes of groups,
such as $\U(\h(a,b,\mathfrak{o}_k)$ and $\SL_2(\mathfrak{o}_k)$,
one would like to obtain an algorithm that determines in  finitely many steps  the above intersection and the side-pairing transformations.

%**********************Isometric Spheres*****************************************

\section{Towards an algorithm for computing a fundamental domain}
\label{seciso}

The main purpose of this section is to give a finite algorithm to compute a fundamental domain in $\HQ^3$ (respectively $\HQ^2$) and generators for a given discrete subgroup $\Gamma$ of $\PSL_2(\C)$ (respectively $\PSL_2(\R)$) that is of finite covolume. To do so we need explicit formulas for the bisectors defining the Dirichlet polyhedron associated to a discrete group in two and three dimensional hyperbolic space.  These formulas were obtained in \cite{jeskiefjuretall} but, for the readers convenience, we reproduce the crucial results needed to  obtain them. We then give some lemmas that simplify the algorithm. Finally we give an explicit criterion that determines the finite number of steps the algorithm has to go through. Calculations are done in dimension $3$. Standard facts about the theory of hyperbolic geometry will be used freely  (see for example \cite{beardon, bridson, elstrodt, gromov, ratcliffe}). In \cite{jesetall}
an algorithm was obtained
in the case $\Gamma$ is cocompact. Because of the explicit formulas obtained in \cite{jeskiefjuretall}, our algorithm is a refinement. Furthermore, our algorithm also applies to
the non-cocompact case.

Let $0\in \BQ^3$ be the origin and $\gamma =
\left(
\begin{array}{ll}
a & b \\ c & d
\end{array}
\right)\in \SL_2(\C )$ and  $\Psi (\gamma ) =
\left(
\begin{array}{ll}
A & C^{\prime} \\ C & A^{\prime}
\end{array}
\right) \in \SB_2(\h)$ (see Proposition \ref{propels}).
Recall that the isometric sphere associated to the transformation $\gamma$, respectively $\Psi(\gamma)$, is the unique sphere on
which $\gamma$, respectively $\Psi(\gamma)$ acts as a Euclidean isometry. It is well known (see for instance
\cite{beardon, ratcliffe}), that if $\gamma = \begin{pmatrix} a & b \\ c & d \end{pmatrix}$ with $c \neq 0$, then the isometric sphere of
$\gamma$ has centre $-\frac{d}{c}$ and radius $\frac{1}{\vert c \vert}$. An independent proof is given in \cite{jeskiefjuretall}, where it is also proved that if $\Psi(\gamma)=\begin{pmatrix} A &
C' \\ C & A' \end{pmatrix}$, then the centre and the radius of the isometric sphere of $\psi(\gamma)$ are respectively $-
C^{-1}A'$ and $\frac{1}{\vert C \vert}$. In the ball model we denote this isometric sphere by $\Sigma_{\Psi(\gamma)}$, its centre by $P_{\Psi (\gamma )}$ and its radius by
$R_{\Psi (\gamma )}$. Note that in fact the isometric sphere of the ball model is strictly speaking only the part of the Euclidean sphere $\Sigma_{\Psi(\gamma)}$ which intersects the ball model, i.e. $\BQ^3 \cap \Sigma_{\Psi(\gamma)}$. However throughout the paper we make some abuse of notation and denote the Euclidean sphere with centre $-C^{-1}A'$ and radius $\frac{1}{\vert C \vert}$ as well as the isometric sphere by $\Sigma_{\Psi(\gamma)}$. The following theorem shows that in the ball model the concepts of isometric
sphere of $\Psi(\gamma)$ and bisector of $0$ and $\Psi(\gamma^{-1})(0)$ are the same. Therefore let $\SU_2(\C)$ be the group of unitary matrices. Note that $\gamma \notin \SU_{2}(\C)$ if and only if $\gamma (j)=j$ (see \cite[Theorem 4.2.2]{beardon}) or, equivalently, $\Psi (\gamma )(0)=0$. In the latter case the bisector of $0$ and $\Psi(\gamma^{-1})(0)$ does not exist as both are the same points. Also $\Psi (\gamma )(0)=0$ if and only if $C=0$ and hence the isometric sphere does not exist neither.
Therefore in the following theorem we exclude the case $\gamma \in \SU_2(\C)$. Also, recall that another equivalent condition for $\gamma \notin \SU_{2}(\C )$ is $\| \gamma \|^2 \neq 2$ and vice-versa.

\begin{theorem}\cite{jeskiefjuretall}\label{isosphereB3}
Let $\gamma \in \SL_2(\C )$ with $ \gamma\notin \SU_2(\C)$.
Then, in the ball model, the isometric sphere associated to $\Psi(\gamma)$ equals the bisector of the geodesic segment linking $0$ and $\Psi(\gamma^{-1})(0)$. Moreover the centre $P_{\Psi(\gamma )}$ of $\Sigma_{\Psi(\gamma)}$ and $\Psi (\gamma^{-1})(0)$ are inverse points with respect to $S^2=\partial {\mathbb{B}}$.
\end{theorem}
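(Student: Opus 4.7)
The plan is to split the proof into two parts: first, a direct computation in $SB_2(\h)$ showing that $P_{\Psi(\gamma)}$ and $\Psi(\gamma^{-1})(0)$ are inverse points with respect to $S^2$, and then an identification of the equation of the isometric sphere with that of the hyperbolic bisector in the ball model.

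First I would use the explicit inversion formula given earlier: if $\Psi(\gamma)=\bigl(\begin{smallmatrix}A & C' \\ C & A'\end{smallmatrix}\bigr)$, then $\Psi(\gamma^{-1})=\Psi(\gamma)^{-1}=\bigl(\begin{smallmatrix}\bar A & -\bar C \\ -C^* & A^*\end{smallmatrix}\bigr)$. Evaluating the action $f(u)=(au+c')(cu+a')^{-1}$ of $SB_2(\h)$ on $\BQ^3$ at $u=0$ yields $\Psi(\gamma^{-1})(0)=-\bar C(A^*)^{-1}$. A direct coordinate check on quaternions gives $\overline{A^*}=A'$ and $|A^*|=|A|$, so $(A^*)^{-1}=A'/|A|^2$ and consequently
\[
\Psi(\gamma^{-1})(0)=-\frac{\bar C A'}{|A|^2}.
\]
Since $C^{-1}=\bar C/|C|^2$, one also has $P_{\Psi(\gamma)}=-\bar C A'/|C|^2$. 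Thus $\Psi(\gamma^{-1})(0)=(|C|^2/|A|^2)\,P_{\Psi(\gamma)}$, a strictly positive real multiple of $P_{\Psi(\gamma)}$. Taking norms and using $|A|^2-|C|^2=1$ (the defining condition for $\Psi(\gamma)\in SB_2(\h)$) gives $|P_{\Psi(\gamma)}|\cdot|\Psi(\gamma^{-1})(0)|=(|A|/|C|)\cdot(|C|/|A|)=1$, so the two points lie on the same ray from the origin with reciprocal Euclidean norms, which is the defining condition for being inverse points with respect to $S^2$. This proves the second assertion of the theorem.

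For the first assertion, I would use the standard Poincar\'e ball-model distance formula
\[
\cosh\rho(u,v)=1+\frac{2|u-v|^2}{(1-|u|^2)(1-|v|^2)}.
\]
Writing $p=\Psi(\gamma^{-1})(0)$, the equation $\rho(0,u)=\rho(u,p)$ simplifies after cross-multiplication to $2\,\Re(u\bar p)=|p|^2(1+|u|^2)$. Meanwhile, the isometric sphere is $|u-P_{\Psi(\gamma)}|^2=R_{\Psi(\gamma)}^2$; expanding and invoking $|P_{\Psi(\gamma)}|^2-R_{\Psi(\gamma)}^2=(|A|^2-1)/|C|^2=1$, this reduces to $2\,\Re(u\,\overline{P_{\Psi(\gamma)}})=1+|u|^2$. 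Finally, substituting the relation $p=P_{\Psi(\gamma)}/|P_{\Psi(\gamma)}|^2$ from the previous step into the bisector equation produces exactly the isometric-sphere equation, so the two loci coincide.

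The main obstacle is the careful bookkeeping of the noncommutative quaternionic operations: verifying $\overline{A^*}=A'$ from the coordinate definitions of the three involutions, keeping the correct left/right order when writing $(A^*)^{-1}=\overline{A^*}/|A^*|^2$ and $C^{-1}=\bar C/|C|^2$, and respecting noncommutativity in expressions such as $(au+c')(cu+a')^{-1}$. Once these quaternionic identities are secured, the remaining calculations reduce to elementary real algebra based solely on the single scalar relation $|A|^2-|C|^2=1$ and the linearity of $\Re$.
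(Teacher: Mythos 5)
Your proof is correct, and its two halves relate to the paper's argument differently. For the inverse-point claim your computation is essentially the paper's: the paper checks that $P_{\Psi(\gamma)}\cdot(\Psi(\gamma^{-1})(0))^{-1}$ is a positive real number and that the product of the Euclidean norms equals $1$, which is the same content as your explicit identity $\Psi(\gamma^{-1})(0)=(|C|^2/|A|^2)\,P_{\Psi(\gamma)}$ combined with $|A|^2-|C|^2=1$. For the main assertion, however, you take a genuinely different route. The paper argues geometrically: it locates the point $M$ where the ray through $P_{\Psi(\gamma)}$ meets $\Sigma_{\Psi(\gamma)}$, uses $\rho(0,u)=\ln\frac{1+\|u\|}{1-\|u\|}$ to show $2\rho(0,M)=\rho(0,\Psi(\gamma^{-1})(0))$, and then concludes from the orthogonality of the ray to the sphere that the sphere is the perpendicular bisector. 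You instead write the bisector as the explicit locus $2\,\Re(u\bar p)=|p|^2(1+|u|^2)$ obtained from the ball-model distance formula and match it, via $p=P_{\Psi(\gamma)}/|P_{\Psi(\gamma)}|^2$ and $|P_{\Psi(\gamma)}|^2-R_{\Psi(\gamma)}^2=(|A|^2-1)/|C|^2=1$, against the expanded equation of the Euclidean sphere. Your version buys a fully algebraic identification of the two loci as point sets; in particular it does not need the fact that $\Sigma_{\Psi(\gamma)}$ is orthogonal to $S^2$ and hence a totally geodesic hyperbolic plane, a hypothesis the midpoint-plus-orthogonality argument quietly relies on and which the paper only verifies in the subsequent proposition. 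The price is that you must import the ball-model distance formula $\cosh\rho(u,v)=1+2|u-v|^2/\bigl((1-|u|^2)(1-|v|^2)\bigr)$, which the paper states only in the upper half-space model; that formula is standard, and your quaternionic bookkeeping ($\overline{A^*}=A'$, $|A^*|=|A|$, $|u-p|^2=|u|^2-2\,\Re(u\bar p)+|p|^2$) is sound, with the cross-multiplication legitimate since $|p|=|C|/|A|<1$.
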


\begin{proof}
Let $\Psi(\gamma)=\begin{pmatrix} A & C' \\ C & A' \end{pmatrix}$. So $P_{\Psi (\gamma )}= -C^{-1}A'$.
It is easily seen that $0$, $P_{\Psi (\gamma )}$ and $\Psi (\gamma^{-1})(0)$
are collinear. Indeed , one may easily show that $P_{\Psi (\gamma )} \cdot (\Psi(\gamma^{-1})(0))^{-1} = \vert A \vert ^2 \vert C \vert ^{-2} \in {\mathbb{R}}$ (as $C \neq 0$ by the assumption that $\gamma \not\in \SU_2(\C)$).
Moreover $\Vert P_{\Psi (\gamma )} \Vert \cdot \Vert (\Psi(\gamma^{-1})(0))^{-1} \Vert
= \vert -C^{-1}A^{\prime }\vert \cdot \vert -\overline{C}A^{*-1} \vert = 1$, where $\Vert V \Vert$ denotes the Euclidean norm of a vector $V$, and thus
$P_{\Psi(\gamma )}$ and $\Psi (\gamma^{-1})(0)$ are inverse points with
respect to $S^2=\partial {\mathbb{B}}$.

Now let $r$ be the ray through $P_{\Psi (\gamma )}$ and put $M= r\cap
\Sigma_{\Psi (\gamma )}$. Clearly $\|M\|=\frac{|A|-1}{|C|}$. Since the
hyperbolic metric, $\rho$, in ${\mathbb{B}^3}$ satisfies $\rho (0, u)=\ln(
\frac{1+\|u\|}{1-\|u\|})$ (see for instance \cite[Chapter 7.2]{beardon}), we have
\begin{eqnarray*}
\rho(0,M) =\ln(\frac{1+\frac{\vert A \vert -1}{\vert C \vert}}{1-\frac{\vert
A \vert -1}{\vert C \vert}}) = \ln(\frac{\vert C \vert + \vert A \vert -1}{
\vert C \vert - \vert A \vert +1}),
\end{eqnarray*}
and
\begin{eqnarray*}
\rho(0,\Psi(\gamma^{-1})(0)) =\ln(
\frac{\vert A \vert + \vert C \vert}{\vert A \vert - \vert C \vert}).
\end{eqnarray*}
Moreover, using the fact that $\vert A \vert^2 - \vert C \vert^2=1$, one easily calculates that $2\cdot  \ln(\frac{\vert C \vert + \vert A \vert -1}{
\vert C \vert - \vert A \vert +1})=\ln(\frac{\vert A \vert + \vert C \vert}{\vert A \vert - \vert C \vert})$, and hence $2\rho(0,M)=\rho(0,\Psi(\gamma^{-1})(0))$. Hence $M$ is the midpoint of the ray from $0$ to $\Psi (\gamma^{-1}) (0)$.
The ray $r$ being orthogonal to $\Sigma_{\Psi(\gamma)} $, it follows
that $\Sigma_{\Psi(\gamma)}$ is the bisector of the geodesic segment linking $0$ and $
\Psi(\gamma^{-1})(0)$.
\end{proof}

In the upper half space model, the result from the above theorem is not necessarily true. The bisector of the geodesic segment linking $0$ and $\Psi(\gamma^{-1})(0)$ being a pure hyperbolic notion, its projection $\eta_0^{-1}({\Sigma_{\Psi (\gamma)}})$ to the upper half space model $\HQ^3$ is still a bisector. In fact it is the bisector of the geodesic segment linking $\eta_0^{-1}(0)=j$ and $\eta_0^{-1}(\Psi(\gamma^{-1})(0))=\gamma^{-1}(j)$. Note that the latter equality comes from Proposition \ref{propels}.(v). However the isometric sphere associated to a transformation $\gamma$ is a purely Euclidean concept and hence the projection of the isometric sphere of $\Psi(\gamma)$ by $\eta_0^{-1}$ is no longer an isometric sphere. So, if we denote the isometric sphere in $\HQ^3$ associated to $\gamma$ by $\Iso_{\gamma}$, then in general we do not have that $\Iso_{\gamma}=\eta_0^{-1}({\Sigma_{\Psi (\gamma
)}})$. We put
$\Sigma_{\gamma}= \eta_0^{-1}({\Sigma_{\Psi
(\gamma )}})$. In fact, $\Sigma_{\gamma} \cap \HQ^3$ is nothing else then $D_{\gamma^{-1}}(j)$ (see the definition in (\ref{defbis})). Note that $\eta_0$ being an isometry yields that $\Sigma_{\gamma}$ is either a Euclidean sphere with centre in $\partial \HQ^3$ or a vertical plane. In case it is an Euclidean sphere,  we  denote its center by
$P_{\gamma}$  and its radius by $R_{\gamma}$.

\begin{proposition}\cite{jeskiefjuretall}
\label{isogammaupmodel}

Let $\gamma =
\left(
\begin{array}{ll}
a & b \\ c & d
\end{array}
\right) \in \SL_2(\C)$ and $\gamma \not \in \SU_2(\C)$.
\begin{enumerate}
\item
$\Sigma_{\gamma} $ is an Euclidean sphere if and
only if $|a|^2+|c|^2 \neq 1$. In this case, its
center  and its radius are respectively given by
$P_{\gamma}=\frac{-(\overline{a}b+\overline{c}d)}{|a|^2+|c|^2-1}$ and
$R^2_{\gamma}=\frac{1+\|P_{\gamma}\|^2}{|a|^2+|c|^2}$.
\item
$\Sigma_{\gamma}$ is a plane if and only if
$|a|^2+|c|^2 = 1$. In this case
$Re(\overline{v}z)+\frac{|v|^2}{2}=0$, $z\in \C$ is
a  defining equation of $\Sigma_{\gamma}$,  where
$v=\overline{a}b+\overline{c}d$.
\end{enumerate}
\end{proposition}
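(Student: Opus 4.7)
The plan is to compute $\Sigma_{\gamma}$ directly inside $\HQ^{3}$ using the characterization $\Sigma_{\gamma}\cap\HQ^{3}=\partial D_{\gamma^{-1}}(j)$, i.e.\ as the set of $u=z+rj\in\HQ^{3}$ equidistant, in the hyperbolic metric, from $j$ and from $\gamma^{-1}(j)$. An alternative route would be to transport Theorem \ref{isosphereB3} back through $\eta_{0}^{-1}$, but that forces one to unpack the quaternionic expressions for $A$ and $C$ coming from (\ref{psigamma}); working directly with the bisector equation in $\HQ^{3}$ is noticeably cleaner.

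First I would apply the displayed action formula of $\SL_{2}(\C)$ on $\HQ^{3}$, with $\gamma^{-1}=\begin{pmatrix}d&-b\\-c&a\end{pmatrix}$ and $P=j$, to obtain
\[
\gamma^{-1}(j)=z_{0}+r_{0}j,\qquad z_{0}=\frac{-(\overline{a}b+\overline{c}d)}{|a|^{2}+|c|^{2}},\qquad r_{0}=\frac{1}{|a|^{2}+|c|^{2}}.
\]
The hypothesis $\gamma\notin\SU_{2}(\C)$, which is equivalent to $\gamma(j)\neq j$, guarantees $j\neq\gamma^{-1}(j)$, so the bisector is well defined. Using $\cosh\rho(P,P')=1+\frac{d(P,P')^{2}}{2rr'}$, the bisector condition $\rho(j,u)=\rho(u,\gamma^{-1}(j))$ is equivalent to the Euclidean identity $r_{0}\,d(j,u)^{2}=d(u,\gamma^{-1}(j))^{2}$. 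Expanding both squared distances with $u=z+rj$ and noting that the linear-in-$r$ cross-terms cancel, this reduces to
\[
(r_{0}-1)(|z|^{2}+r^{2})+2\,\mathrm{Re}(z\,\overline{z_{0}})+r_{0}(1-r_{0})-|z_{0}|^{2}=0.
\]

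The proof then splits according to whether $r_{0}=1$, which is precisely the dichotomy $|a|^{2}+|c|^{2}=1$ versus $|a|^{2}+|c|^{2}\neq 1$. When $r_{0}\neq 1$, I would divide by $r_{0}-1$, complete the square in the complex variable $z$, and read off a Euclidean sphere centred at $P_{\gamma}=-z_{0}/(r_{0}-1)$ with square-radius $r_{0}(1+\|P_{\gamma}\|^{2})$; substituting the expressions above for $z_{0}$ and $r_{0}$ recovers the claimed formulas $P_{\gamma}=\tfrac{-(\overline{a}b+\overline{c}d)}{|a|^{2}+|c|^{2}-1}$ and $R_{\gamma}^{2}=\tfrac{1+\|P_{\gamma}\|^{2}}{|a|^{2}+|c|^{2}}$ of (i). When $r_{0}=1$ the quadratic terms disappear and the relation collapses to $2\,\mathrm{Re}(z\overline{z_{0}})-|z_{0}|^{2}=0$; since in this case $z_{0}=-v$ with $v=\overline{a}b+\overline{c}d$, this is exactly the vertical plane $\mathrm{Re}(\overline{v}z)+\tfrac{|v|^{2}}{2}=0$ asserted in (ii).

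The only real obstacle is careful bookkeeping: one must apply the action formula to $\gamma^{-1}$ (not to $\gamma$) in order for the expressions $|a|^{2}+|c|^{2}$ and $\overline{a}b+\overline{c}d$ to emerge with the correct signs, and one must track coefficients accurately when completing the square so that the shape $r_{0}(1+\|P_{\gamma}\|^{2})$ comes out on the right. No further conceptual input beyond the distance formula and the definition of $\Sigma_{\gamma}$ is required; in particular, the result of Theorem \ref{isosphereB3} and the isomorphism $\Psi$ need not be invoked.
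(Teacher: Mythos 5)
Your proof is correct, but it takes a genuinely different route from the paper's. The paper argues synthetically in the ball model: it invokes Theorem \ref{isosphereB3}, shows by Pythagoras that $\Sigma_{\Psi(\gamma)}$ is orthogonal to $S^2$, deduces that $0$ and $\Psi(\gamma^{-1})(0)$ are inverse points with respect to $\Sigma_{\Psi(\gamma)}$, transports this through the M\"obius map $\eta_0$ to conclude that $j$ and $\gamma^{-1}(j)$ are inverse points with respect to $\Sigma_{\gamma}$, and then obtains $P_{\gamma}$ as the intersection of the line through $j$ and $\gamma^{-1}(j)$ with $\partial \HQ^3$ and $R^2_{\gamma}$ from the inverse-point identity $R^2_{\gamma}=\|j-P_{\gamma}\|\cdot\|\gamma^{-1}(j)-P_{\gamma}\|$. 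You instead expand the bisector equation $\rho(j,u)=\rho(u,\gamma^{-1}(j))$ directly in $\HQ^3$ via the formula $\cosh\rho=1+\frac{d^2}{2rr'}$ and complete the square; I checked the algebra (the reduction to $(r_0-1)(|z|^2+r^2)+2\mathrm{Re}(z\overline{z_0})+r_0(1-r_0)-|z_0|^2=0$, the identity $\frac{|z_0|^2}{(r_0-1)^2}+r_0+\frac{|z_0|^2}{r_0-1}=r_0\bigl(1+\|P_\gamma\|^2\bigr)$, and the collapse to $\mathrm{Re}(\overline{v}z)+\frac{|v|^2}{2}=0$ when $r_0=1$) and it all works out, with the dichotomy sphere/plane falling out cleanly as the vanishing or not of the quadratic coefficient $r_0-1$ --- arguably cleaner than the paper's collinearity argument for the ``only if'' direction. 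One small caveat: your closing claim that Theorem \ref{isosphereB3} ``need not be invoked'' is not quite fair, since $\Sigma_{\gamma}$ is \emph{defined} as $\eta_0^{-1}(\Sigma_{\Psi(\gamma)})$, and its identification with the bisector $\partial D_{\gamma^{-1}}(j)$ --- the starting point of your computation --- is precisely Theorem \ref{isosphereB3} pushed through the equivariance of $\eta_0$; you are using that theorem implicitly, just not its quantitative content. What your approach buys is the elimination of the inverse-point and orthogonality machinery in favour of elementary algebra; what the paper's buys is a geometric explanation of \emph{why} the centre lies on the line through $j$ and $\gamma^{-1}(j)$.
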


\begin{proof}  Suppose first that $\Sigma_{\gamma}$ is a Euclidean sphere. If we denote the inverse point of a point $P$ with respect to $S^2$ by $P^{*}$, then by Theorem \ref{isosphereB3}, $P_{\Psi (\gamma )}^* =
\Psi (\gamma )^{-1}(0)$. Consider the two spheres $\Sigma_{\Psi(\gamma)}$ and $S^2$. As
\begin{eqnarray*}
1 + \frac{1}{\vert C \vert^2} = \frac{1+\vert C \vert ^2}{\vert C \vert ^2}
= \frac{\vert A \vert ^2}{\vert C \vert ^2}= \vert P_{\Psi(\gamma )} \vert ^2,
\end{eqnarray*}
by Pythagoras' Theorem, the two spheres are orthogonal. Because of this orthogonality, $P_{\Psi (\gamma )}^* =
\Psi (\gamma )^{-1}(0)$ implies that $0$ and $\Psi(\gamma^{-1})(0)$ are inverse points with respect to $\Sigma_{\Psi(\gamma)}$.  By Proposition \ref{propels},  $\eta_0$ is a
 M\"{o}bius transformation. Hence it follows that
$j=\eta_0^{-1}(0)$ and
$\gamma^{-1}(j)=\eta_0^{-1}(\Psi (\gamma )^{-1}(0))$ are inverse points with respect to $\Sigma_{\gamma}=\eta_0^{-1}(\Sigma_{\Psi(\gamma)})$ (for details see \cite[Theorem 3.2.5]{beardon}).
So if, $\ \Sigma_{\gamma }$ is not a vertical
plane, then $j$, $\gamma^{-1}(j)$ and
$P_{\gamma}$ are collinear points. The exact expression of  $\gamma^{-1}(j)$ is
$-\frac{(\overline{a}b+\overline{c}d)}{|a|^2+|c|^2}+
\frac{1}{|a|^2+|c|^2}j$ and hence for the three points to be collinear we must have that $|a|^2+|c|^2 \neq 1$.  In that case it follows that
$P_{\gamma}= l\cap \partial \HQ^3$, where $l$ is
the Euclidean line determined by $j$ and $\gamma^{-1}(j)$.
A simple calculation
gives the formula of $\ P_{\gamma}$. Since $j$ and $\gamma^{-1}(j)$ are inverse points with respect to $\Sigma_{\gamma}$, $R^2_{\gamma}=\|j-P_{\gamma}\|\cdot
\| \gamma^{-1}(j)-P_{\gamma}\|$. This gives the formula of $R^2_{\gamma}$ and proves the first item.

If $|a|^2+|c|^2=1$, the line $l$ determined by $j$ and $\gamma^{-1}(j)$ is parallel to the border of the upper half space $\partial\HQ^3$. So $\Sigma_{\gamma}$ cannot be a Euclidean sphere and hence is a vertical plane. Conversely, if $\Sigma_{\gamma}$ is a vertical plane, $j$ and $\gamma^{-1}(j)$ have to be at the same height and hence $|a|^2+|c|^2=1$. In this case,
$\gamma^{-1}(j)=
-(\overline{a}b+\overline{c}d)+j$ and hence
$v=j - \gamma^{-1}(j)= \overline{a}b+\overline{c}d$ is orthogonal to
$\Sigma_{\gamma}$. From this one obtains the
mentioned defining equation of $\ \Sigma_{\gamma}$, hence the second item.
\end{proof}

The next proposition gives some more information on the bisectors in the ball model.

\begin{proposition}\cite{jeskiefjuretall}
\label{isogammaballmodel}
Let $\gamma = \left(
\begin{array}{ll}
a & b \\ c & d
\end{array}
\right)\in \SL_2(\C )$ and  $\Psi (\gamma ) =
\left(
\begin{array}{ll}
A & C^{\prime} \\ C & A^{\prime}
\end{array}
\right)$. Suppose that $\gamma \notin \SU_{2}(\C)$.
Then the following properties hold.
\begin{enumerate}
\item $|A|^2=\frac{2+\|\gamma\|^2}{4}$ and $|C|^2=\frac{\|\gamma\|^2-2}{4}$
\item
$P_{\Psi (\gamma )}=
\frac{1}{-2+\|\gamma\|^2}\cdot [\
-2(\overline{a}b+\overline{c}d) +
[(|b|^2+|d|^2)-(|a|^2+|c|^2)]j\ ]$
\item
$\Psi (\gamma^{-1})(0)= P_{\Psi (\gamma )}^*=
\frac{1}{2+\|\gamma\|^2}\cdot [\
-2(\overline{a}b+\overline{c}d) +
[(|b|^2+|d|^2)-(|a|^2+|c|^2)]j\ ]$ (notation of
inverse point w.r.t. $S^2$).
\item
$\|P_{\Psi (\gamma )}\|^2 = \frac{2+\|\gamma\|^2}{-2+\|\gamma\|^2}$
\item
$ R_{\Psi (\gamma )}^2= \frac{4}{-2+\|\gamma\|^2}$
\end{enumerate}
\end{proposition}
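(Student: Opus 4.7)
The plan is to prove the five parts in the order (1), (2), (4), (3), (5), reducing each to the explicit matrix formula \eqref{psigamma} for $\Psi(\gamma)$ together with the quaternion relations $jz = \bar z j$ for $z \in \C$ and $j^{2}=-1$, and to the orthogonality/inverse-point data already extracted in the proof of Theorem \ref{isosphereB3}. Throughout I would write every quaternion in the normal form $z_{1}+z_{2}j$ with $z_{1},z_{2}\in\C$, using $|z_{1}+z_{2}j|^{2}=|z_{1}|^{2}+|z_{2}|^{2}$.

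For (1), I would read off from \eqref{psigamma} that $A=\tfrac12((a+\bar d)+(b-\bar c)j)$ and $C=\tfrac12((c+\bar b)+(d-\bar a)j)$. Expanding $|a+\bar d|^{2}+|b-\bar c|^{2}$ (respectively $|c+\bar b|^{2}+|d-\bar a|^{2}$) gives $\|\gamma\|^{2}\pm 2\,\mathrm{Re}(ad-bc)$; since $\det\gamma=ad-bc=1\in\R$ the real part equals $1$, yielding $4|A|^{2}=\|\gamma\|^{2}+2$ and $4|C|^{2}=\|\gamma\|^{2}-2$. For (2), I would start from $P_{\Psi(\gamma)}=-C^{-1}A'$ (the centre of the isometric sphere in the ball model, as recalled before Theorem \ref{isosphereB3}) and write $C^{-1}=\bar C/|C|^{2}$ with $\bar C=\tfrac12((\bar c+b)+(\bar a-d)j)$ in normal form. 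Expanding the product $\bar C A'$ and collecting the $\C$-part and the $j$-part using $jz=\bar z j$ and $j^{2}=-1$ gives, after the standard cancellations (the $\C$-part reduces to $2(\bar a b+\bar c d)$ using $\mathrm{Re}(ad-bc)=1$, and the $j$-part reduces to $|a|^{2}+|c|^{2}-|b|^{2}-|d|^{2}$ using $\mathrm{Im}(ad-bc)=0$), precisely the formula claimed once one divides by $|C|^{2}$ from (1) and negates.

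Parts (3) and (4) then follow almost for free. For (4), the norm is multiplicative on $\h$, so $\|P_{\Psi(\gamma)}\|^{2}=|A'|^{2}/|C|^{2}$; since $u\mapsto u'$ is an algebra automorphism and $|k|=1$, we have $|A'|=|A|$, and (1) gives $\|P_{\Psi(\gamma)}\|^{2}=(\|\gamma\|^{2}+2)/(\|\gamma\|^{2}-2)$. For (3), Theorem \ref{isosphereB3} already identifies $\Psi(\gamma^{-1})(0)$ with the inverse point $P_{\Psi(\gamma)}^{*}=P_{\Psi(\gamma)}/\|P_{\Psi(\gamma)}\|^{2}$; dividing the formula in (2) by (4) produces the stated expression because the factor $(\|\gamma\|^{2}-2)$ cancels and leaves $1/(\|\gamma\|^{2}+2)$. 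For (5) I would invoke the orthogonality of $\Sigma_{\Psi(\gamma)}$ and $S^{2}$ established in the proof of Theorem \ref{isosphereB3} via Pythagoras, giving $R_{\Psi(\gamma)}^{2}=\|P_{\Psi(\gamma)}\|^{2}-1$ (equivalently $R_{\Psi(\gamma)}=1/|C|$); with (1) and (4) this immediately yields $R_{\Psi(\gamma)}^{2}=4/(\|\gamma\|^{2}-2)$.

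The main obstacle is the quaternion bookkeeping in step (2): one must multiply $\bar C A'$ carefully, flipping $j$ past complex scalars via $jz=\bar z j$, and then recognise that the only terms that fail to cancel are precisely the combinations that collapse to $\bar a b+\bar c d$ and to $|a|^{2}+|c|^{2}-|b|^{2}-|d|^{2}$ once $ad-bc=1$ is used to kill both $\mathrm{Re}(ad-bc)-1$ and $\mathrm{Im}(ad-bc)$. Every other step is either immediate algebra or a direct appeal to results already in the excerpt.
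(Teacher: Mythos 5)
Your proposal is correct and follows essentially the same route as the paper, which simply declares the five items "straightforward" from the explicit formula \eqref{psigamma} for $\Psi(\gamma)$ together with $P_{\Psi(\gamma)}=-C^{-1}A'$; you have merely carried out the quaternion bookkeeping the paper omits, and your reductions of (3) and (5) to Theorem \ref{isosphereB3} are consistent with how the paper uses that theorem elsewhere (e.g.\ in the proof of Proposition \ref{isogammaupmodel}). The computations check out, so no changes are needed.
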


\begin{proof}
The proof of the five items  is straightforward using the explicit
formulas for $\Psi (\gamma )$, $A$ and $C$ given by equation (\ref{psigamma}) and knowing that $P_{\Psi
(\gamma )}= C^{-1}A^{\prime}$.
\end{proof}

We will now give a lemma which will be crucial in the implementation of the algorithm. Let $\gamma \in \SL_2(\C )$ and $\gamma \notin \SU_2(\C)$ and let $r$ be  the
ray through the center of $\Sigma_{\Psi (\gamma
)}$. Denote by $M$ and $N$,  respectively,  the
intersection of $\ r$ with $\Sigma_{\Psi (\gamma
)}$ and $S^2$. Denote the Euclidean distance from $M$ to $N$ by
$\rho_{\gamma}$. Explicitly we have
that  $\rho_{\gamma} =1+R_{\Psi (\gamma
)}-\|P_{\Psi (\gamma )}\|$. Our next result
shows that  $\rho_{\gamma}$ is a strictly
decreasing function of $\|\gamma\|$. The proof of this result may be found in \cite[proof of Lemma 3.7]{jeskiefjuretall} ,
%and hence we omit the proof here. 
but for the convenience of the reader, we reproduce it here.
Note that the Euclidean  volume of the intersection of the interior of $\Sigma_{\Psi (\gamma )}$  with $\BQ^3$ is a function of $\rho_{\gamma}$.

\begin{lemma}\cite{jeskiefjuretall}
\label{volume}
Let $\Gamma$  be a discrete subgroup of $\PSL_2(\C)$ acting on $\BQ^3$.  Then $\rho_{\gamma}$ is a strictly decreasing function of $\ \|\gamma\|^2$  on $\Gamma \setminus \SU_2(\C)$.
\end{lemma}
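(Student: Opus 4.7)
The plan is to turn the claim into a one-variable calculus exercise: express $\rho_\gamma$ explicitly as a function of the single real variable $t := \|\gamma\|^2$, and then show that this function is strictly decreasing on its natural domain $(2,\infty)$.

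First I would combine the formulas of Proposition \ref{isogammaballmodel}. Since $\gamma \notin \SU_2(\C)$ is equivalent to $\|\gamma\|^2 \neq 2$, and since in any case $\|\gamma\|^2 \geq 2$ (with equality exactly on $\SU_2(\C)$), the domain of definition is $t > 2$. Items (4) and (5) of Proposition \ref{isogammaballmodel} give
\begin{equation*}
R_{\Psi(\gamma)} = \frac{2}{\sqrt{t-2}}, \qquad \|P_{\Psi(\gamma)}\| = \sqrt{\frac{t+2}{t-2}},
\end{equation*}
so from the definition $\rho_\gamma = 1 + R_{\Psi(\gamma)} - \|P_{\Psi(\gamma)}\|$ I obtain
\begin{equation*}
\rho_\gamma = 1 + \frac{2 - \sqrt{t+2}}{\sqrt{t-2}} =: 1 + f(t).
\end{equation*}

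Next I would differentiate $f$ on $(2,\infty)$. A direct computation (quotient rule, then clearing the common denominator $2\sqrt{t-2}\sqrt{t+2}$) yields, after simplification,
\begin{equation*}
f'(t) = \frac{2 - \sqrt{t+2}}{(t-2)^{3/2}\sqrt{t+2}}.
\end{equation*}
For $t > 2$ we have $\sqrt{t+2} > 2$, so the numerator is strictly negative and the denominator is strictly positive; hence $f'(t) < 0$ throughout $(2,\infty)$. Since $\rho_\gamma$ differs from $f$ by a constant, this proves that $\rho_\gamma$ is a strictly decreasing function of $t = \|\gamma\|^2$ on $\Gamma \setminus \SU_2(\C)$.

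I do not foresee a serious obstacle: the only thing that requires a little care is the sign analysis in the derivative, in particular remembering that on the relevant domain $\sqrt{t+2}>2$ (equivalently $t>2$) so that the numerator has a definite sign. Discreteness of $\Gamma$ plays no role in the calculation itself; it is merely what makes the quantity $\rho_\gamma$ interesting, since in the algorithm one will want to bound how many elements $\gamma \in \Gamma$ can have $\rho_\gamma$ exceeding a prescribed threshold.
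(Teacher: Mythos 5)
Your proposal is correct and follows essentially the same route as the paper: both use Proposition \ref{isogammaballmodel} to write $\rho_{\gamma}=1+2(\|\gamma\|^2-2)^{-1/2}-\bigl(\tfrac{\|\gamma\|^2+2}{\|\gamma\|^2-2}\bigr)^{1/2}$ and then check by a one-variable derivative computation that this is strictly decreasing on $(2,\infty)$, the sign hinging on $\sqrt{t+2}>2$. The only cosmetic difference is that you differentiate directly in the variable $t=\|\gamma\|^2$, whereas the paper parametrizes by $x$ with $x^2$ in place of $\|\gamma\|^2$.
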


\begin{proof}
Using Proposition \ref{isogammaballmodel}, one obtains that  $\rho_{\gamma}= 1-(\frac{\Vert \gamma \Vert^2+2}{\Vert \gamma \Vert^2-2})^{\frac{1}{2}}+2(\Vert \gamma \Vert^2-2)^{\frac{-1}{2}}$. It is well known that for any $\gamma \in \GL_2(\C)$, we have that $2 \cdot \vert \det (\gamma )\vert \leq \|\gamma\|^2$, and thus  $\Vert \gamma \Vert^2 \geq 2$ if $\gamma \in \SL_{2} ( \C )$,  with equality if and only if $\gamma \in \SU_2(\C)$.  Consider now the continuous function $f: \left]2,+\infty \right[ \longrightarrow \R$ given by $f(x)=1-(\frac{x^2+2}{x^2-2})^{\frac{1}{2}}+2(x^2-2)^{\frac{-1}{2}}$. Then
$f^{\prime}(x)=-2x(x^2-2)^{-3/2}(x^2+2)^{-1/2}[-2+\sqrt{x^2+2}]$, which shows that $f$ is a strictly decreasing function. From this the result follows.
\end{proof}

%***************************
We now come to our algorithm to compute a Dirichlet fundamental polyhedron for  a discrete subgroup $\Gamma$  of $\mbox {Iso}^+(\HQ^3)$ which is  of finite covolume. Recall that a group $\Gamma$ is of finite covolume if the volume of its fundamental domain is finite. More information on hyperbolic volumes may be found in \cite[Chapter 3]{ratcliffe}. We describe this algorithm first under the assumption that the stabilizer $\Psi(\Gamma)_0$ of the point $0$ in $\BQ^3$ is trivial. In the next section we avoid this assumption and explain how to change the algorithm slightly depending on $\Gamma$ being the unit group of an order in a division algebra or a matrix algebra. Because of the concrete formulas obtained in Proposition~\ref{isogammaballmodel} we compute a fundamental domain in $\BQ^3$. Using the map $\eta_{0}$ one can then
convert this to  a fundamental domain in $\HQ^{3}$ which is more suitable for visualization.
For $\gamma \in \SL_2(\C)$, define $B(\gamma)$ to be the intersection of the Euclidean ball $\overline{B}_{R_{\Psi (\gamma )}}(P_{\Psi (\gamma )})$ with the closed unit ball $\overline{\BQ^3}$ (so this full closed ball determined by the sphere $\Sigma_{\Psi (\gamma )}$). Let $f:\Gamma \rightarrow \C$ be the map defined by $f(\gamma)= \Vert \gamma \Vert^2$. Then we order the elements of $Im(f)$ in  a strictly increasing sequence $r_i$ for $i \geq 1$. Note that this is possible because of the discreteness of $\Gamma$. For $n \geq 1$, we define the sets $\F_{n}$ recursively in the following way:
\begin{equation*}
\F_1=\bigcup_{\gamma \in \Gamma} \lbrace B(\gamma) \mid \textrm{ and } \Vert \gamma \Vert ^2 =r_1 \textrm{ and } \gamma \neq 1 \rbrace
\end{equation*}
and for every $n \geq 1$ define
\begin{equation*}
\F_n=\bigcup_{i=r_1}^{r_{n-1}} \F_i \cup \bigcup_{\gamma \in \Gamma} \lbrace B(\gamma) \mid \Vert \gamma \Vert ^2 =r_n \textrm{ and } B(\gamma) \not\subseteq \cup_{i=r_1}^{r_{n-1}} \F_i \rbrace.
\end{equation*}
Note that later in all the examples we consider, the sequence $r_i$ with $i \geq 1$ may be taken inside the set of natural numbers.
Using these definitions, the following proposition describes the \textit{Dirichlet Algorithm  of Finite Covolume} (DAFC). Note that we state the algorithm here under the condition that the stabilizer of $0$ in the ball model, $\Psi(\Gamma)_0$ is trivial. However in the beginning of the next section we explain how one may adapt this algorithm to cases with non trivial stabilizer.

\begin{proposition}[DAFC]\label{DAFC} Let $\Gamma \subseteq \PSL_2(\C)$ be a discrete group of finite covolume and with $\Psi(\Gamma)_{0}$ trivial. Then the following  algorithm computes in a finite number of steps in $\HQ^3$ the Dirichlet fundamental polyhedron with centre $j$ associated to a subgroup of finite index of $\Gamma$.
\begin{enumerate}[Step 1:]
\item Compute $\F_1$, $\F_2$, $\ldots$ in this given order.
\item Set $N$ the minimum such that $\partial \BQ^3 \subseteq  \F_N$.
\item $\F=\overline{ \BQ^3 \setminus \F_{N}}$ is a fundamental polyhedron in $\BQ^3$.
\item Use $\eta_0^{-1}$ to obtain in $\HQ^{3}$ the  fundamental polyhedron $\bigcap_{\gamma \in L} D_{\gamma^{-1}} (j)$, where $L=\{ \gamma \in \Gamma \mid B(\gamma ) \in \F_{N} \}$ (a finite set). The intersection is non-redundant.
\end{enumerate}
Moreover, $\langle \gamma \mid \gamma \in L \rangle$ is a subgroup of finite index in $\Gamma$.
\end{proposition}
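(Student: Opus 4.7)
The plan is to recognise $\F = \overline{\BQ^3 \setminus \F_N}$ as a Dirichlet fundamental polyhedron of $\Psi(\Gamma)$ in $\BQ^3$ centred at $0$, pull it back to $\HQ^3$ via the isometry $\eta_0$ of Proposition~\ref{propels}, and then apply Poincar\'e's Theorem (Theorem~\ref{Poincare}).

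\emph{Geometric identification.} By Theorem~\ref{isosphereB3}, for each $\gamma \in \Gamma \setminus \SU_2(\C)$ the isometric sphere $\Sigma_{\Psi(\gamma)}$ is the bisector of $0$ and $\Psi(\gamma^{-1})(0)$, and Proposition~\ref{isogammaballmodel} gives $\|P_{\Psi(\gamma)}\|^2 - R_{\Psi(\gamma)}^2 = 1$, so the origin is Euclidean-exterior to $B(\gamma)$. Therefore the half-space $D_{\Psi(\gamma^{-1})}(0)$ equals $\BQ^3 \setminus B(\gamma)^{\circ}$, and reindexing $\gamma \mapsto \gamma^{-1}$ yields
\[
\F_D \;=\; \bigcap_{1 \neq \gamma \in \Psi(\Gamma)} D_\gamma(0) \;=\; \overline{\BQ^3 \setminus \bigcup_{1 \neq \gamma \in \Psi(\Gamma)} B(\gamma)^{\circ}}.
\]
Since $\Psi(\Gamma)_0 = \{1\}$ by assumption, $0$ is an admissible centre and no stabiliser correction (\ref{fundwithstab}) is needed; the resulting $\F_D$ is convex, exact and locally finite by standard Dirichlet theory.

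\emph{Termination.} This is the main obstacle. By Lemma~\ref{volume}, $\rho_\gamma = 1 + R_{\Psi(\gamma)} - \|P_{\Psi(\gamma)}\|$ is a strictly decreasing function of $\|\gamma\|^2$, and the explicit formula in its proof shows $\rho_\gamma \to 0$ as $\|\gamma\|^2 \to \infty$; in particular $B(\gamma)$ lies inside the Euclidean shell $\{u \in \overline{\BQ^3} : \|u\| \geq 1 - \rho_\gamma\}$ around $\partial \BQ^3$. The finite-covolume hypothesis guarantees that $\F_D$ meets $\partial \BQ^3$ only at finitely many cusps (empty in the cocompact case), so away from small cusp neighbourhoods $\F_D$ is bounded away from $\partial \BQ^3$ by some $\varepsilon > 0$. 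Thus the shell of depth $\varepsilon$ is already contained in $\bigcup_{\gamma} B(\gamma)$, and by discreteness of $\Gamma$ only finitely many $\gamma$ satisfy $\rho_\gamma \geq \varepsilon$; these appear at some level $r_N$. Every subsequent $\gamma$ has $\rho_\gamma < \varepsilon$, so its cap $B(\gamma)$ sits inside the covered shell and is filtered out by the algorithm, i.e.\ $\F_N = \bigcup_{1 \neq \gamma \in \Psi(\Gamma)} B(\gamma)$ up to lower-dimensional boundary, and the stopping condition $\partial \BQ^3 \subseteq \F_N$ is met (modulo the finite set of cusps, which in practice is handled separately, as illustrated in Section~4).

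\emph{Conclusion via Poincar\'e.} Hence $\F = \F_D$, and $\eta_0^{-1}(\F) = \bigcap_{\gamma \in L} D_{\gamma^{-1}}(j)$ is the Dirichlet fundamental polyhedron of $\Gamma$ with centre $j = \eta_0^{-1}(0)$; non-redundancy is automatic because the algorithm only keeps caps that are not already covered by previous stages. Theorem~\ref{Poincare} now applies to this exact, convex, locally finite fundamental polyhedron: the side-pairings of $\F$ generate $\Gamma$, and they form a subset of $L$. Therefore $\langle L \rangle \supseteq \langle \text{side-pairings}\rangle = \Gamma$, so $\langle L \rangle$ has finite index (in fact index one) in $\Gamma$, which proves the final assertion.
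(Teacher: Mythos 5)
The overall strategy (identify each $B(\gamma)$ with the complement of the Dirichlet half-space $D_{\Psi(\gamma^{-1})}(0)$, stop when the closed caps cover $S^2$, then invoke Poincar\'e) is the same as the paper's, but your argument has a genuine error at its crux: the identification $\F=\F_D$ and the ensuing conclusion that $\langle L\rangle=\Gamma$ with index one. The stopping criterion $\partial \BQ^3\subseteq \F_N$ only controls what happens on the sphere at infinity; there may exist $\gamma\in\Gamma$ with $\|\gamma\|^2>r_N$ whose ball $B(\gamma)$ is \emph{not} contained in $\F_N$, so that its bisector genuinely cuts into $\overline{\BQ^3\setminus\F_N}$ even though its boundary cap adds nothing new on $S^2$. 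Hence $\F\supseteq\F_D$, possibly strictly, $L$ need not contain all side-pairing transformations of the true Dirichlet polyhedron of $\Gamma$, and $\langle L\rangle$ can be a proper subgroup. The paper states exactly this in the remark following the proposition and introduces the Refined DAFC (Proposition \ref{completeset}) precisely to recover the full group. The correct route to the stated conclusion is the weaker one the paper takes: $\F$ is a finite-sided convex polyhedron of finite volume containing the Dirichlet domain, its side-pairing transformations lie in $L$ and generate a group admitting a finite-volume fundamental domain, whence finite (not necessarily trivial) index.

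A second, smaller issue is termination. You derive it from the decay of $\rho_\gamma$ (Lemma \ref{volume}) plus an $\varepsilon$-shell argument, but then concede that the covering of $\partial\BQ^3$ holds only ``modulo the finite set of cusps, which in practice is handled separately''. The stopping condition of the proposition is the literal inclusion $\partial\BQ^3\subseteq\F_N$, so the cusps cannot be waved away: one must check that each ideal vertex is itself contained in one of the closed caps (it is, since the caps are closed and the bounding circles of the caps incident to a cusp pass through it), or, as the paper does, simply invoke the Greenberg--Garland--Raghunathan theorem (Theorem II.2.7 of Elstrodt--Grunewald--Mennicke) that a finite-covolume group has a finite-sided Dirichlet polyhedron, which yields termination directly. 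Your shell argument also silently assumes that every later cap is swallowed by the earlier ones, which is exactly the point that fails and causes the first problem above.
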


\begin{proof}
We claim that for every $n \in \N$, the set $\lbrace B(\gamma) \mid \Vert \gamma \Vert ^2 =n \rbrace$ is finite.
Indeed if $\|\gamma_1\|=\|\gamma_2\|$ then, by Proposition \ref{isogammaballmodel},  $\| P_{\Psi(\gamma_1)}^*\|= \| P_{\Psi (\gamma_2)}^*\|$ and hence also $\Vert \Psi(\gamma_1^{-1})(0) \Vert= \Vert \Psi(\gamma_2^{-1})(0) \Vert$. Since $\Gamma$ is discrete and the ball $\overline{B}_{\Vert \Psi(\gamma_1^{-1})(0) \Vert}(0)$ in $\BQ^3$  is compact, the claim follows. So the sets $\F_i$ for $i \geq 1$ are also finite and thus they are computable in a finite number of steps. Since $\Gamma$ is of finite covolume, results of Greenberg, Garland and Raghunathan (see \cite[Theorem II.2.7]{elstrodt}) imply that the Dirichlet fundamental domain of $\Gamma$ has finitely many sides and thus the DAFC  stops in  a finite number of steps. Define the set $L=\lbrace \gamma \in \Gamma \mid B(\gamma) \in \F_N \rbrace$. Then the fundamental domain $\F$ given by the DAFC is the finite intersection $\bigcap_{\gamma \in L} D_{\gamma^{-1}}(j)$. Lemma \ref{volume} guarantees that this intersection is non-redundant. This finishes the proof of the algorithm.

Clearly, by definition, the Dirichlet fundamental domain with centre $j$ is given by the intersections of the sets  $D_{\gamma}(j)=\{u\in \HQ^3 \mid \rho(j,u)\leq \rho (u,\gamma (j))\}$ for finitely many $\gamma \in \Gamma$. It is well known (see \cite[Theorem 6.7.4]{ratcliffe}) that those finitely many $\gamma$ are exactly the side pairing transformations and hence generate the group $\Gamma$. So the set of $\gamma \in \Gamma$ such that $B(\gamma) \in \F_N$ is a generating set for a subgroup of finite index in $\Gamma$.
\end{proof}

Note that the fundamental domain computed by the DAFC is not necessarily the fundamental domain of the complete group $\Gamma$. In fact the algorithm stops as soon as it has found a fundamental domain of finite volume. By the Poincar\'e Method, this just guarantees that we are dealing with a fundamental domain of a subgroup of finite index in $\Gamma$. However, this is sufficient for the main purpose of this paper: obtaining  finitely many generators for a subgroup of finite index in $\U(\Z G)$. Nevertheless, the next proposition also gives a finite  algorithm  to compute a fundamental domain of the complete group $\Gamma$.

\begin{proposition}[Refined DAFC]\label{completeset}
Suppose $\F=\eta_0^{-1}(\overline{\BQ^3 \setminus \F_N})$ is the fundamental domain in $\HQ^3$ given by the DAFC. Then the following finite algorithm gives a fundamental domain of the complete group $\Gamma$.
\begin{enumerate}[Step 1:]
\item Compute the finite number of vertices $V_i$ of $\F$.
\item Compute $k=\textrm{cosh}^{-1}(\frac{r_N}{2})$, $r=\textrm{max}\lbrace \lbrace \frac{k}{2}   \rbrace \cup \lbrace \rho(j,V_i) \mid V_i \textrm{ vertex of } \F \rbrace \rbrace$ and
$\tilde{N} = 2 cosh (2r)$.
\item $\tilde{\Fe}=\eta_0^{-1}(\overline{\BQ^3 \setminus \F_{\tilde{N}}})$ is a fundamental domain for $\Gamma$.
\end{enumerate}
Moreover, $\Gamma =\langle \gamma \mid B(\gamma ) \subseteq \F_{\tilde{N}} \rangle$.
\end{proposition}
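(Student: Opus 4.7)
The plan is to show that enlarging the collection of bisectors from $\F_N$ (used in the DAFC for a finite-index subgroup) to $\F_{\tilde N}$ captures every remaining side-pairing of $\Gamma$. The main tool is the identity $\|\gamma\|^2 = 2\cosh\rho(j,\gamma(j))$ for $\gamma \in \SL_2(\C)$, which converts the norm filtration $\{r_i\}$ into a filtration by hyperbolic distance. First I would verify this identity using Proposition \ref{isogammaballmodel}: writing $|A|^2-|C|^2=1$ and $|A|^2+|C|^2=\|\gamma\|^2/2$, the computation in the proof of Theorem \ref{isosphereB3} gives $e^{\pm\rho(0,\Psi(\gamma^{-1})(0))}=(|A|\pm|C|)/(|A|\mp|C|)$, whence $2\cosh\rho(0,\Psi(\gamma^{-1})(0))=2(|A|^2+|C|^2)=\|\gamma\|^2$. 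Since $\eta_0$ is an isometry sending $j$ to $0$ and equivariant for $\Psi$, this yields $\|\gamma\|^2 = 2\cosh\rho(j,\gamma(j))$. In particular $k=\cosh^{-1}(r_N/2)$ is precisely $\rho(j,\gamma(j))$ for the $\gamma$ of maximal norm used in $\F_N$.

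Next I would establish that $r$ is the hyperbolic circumradius of $\F$ about $j$: every point $P\in \F$ satisfies $\rho(j,P)\leq r$. Since $\F$ is a convex polyhedron and the hyperbolic distance function $\rho(j,\cdot)$ is convex along geodesics (its sublevel sets are convex metric balls), the maximum of $\rho(j,\cdot)$ over $\F$ is attained at a vertex $V_i$. The role of the extra term $k/2$ is technical but essential: it forces $\tilde{N}=2\cosh(2r)\geq 2\cosh(k)=r_N$, which ensures $\F_{\tilde N}\supseteq \F_N$; hence every bisector already present in the DAFC polyhedron $\F$ remains present in $\tilde{\F}$, and therefore $\tilde{\F}\subseteq \F$.

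The heart of the argument is the following claim: if $\gamma \in \Gamma \setminus \{1\}$ satisfies $\gamma(\F)^\circ \cap \F^\circ \neq \emptyset$ (so $\F$ is not yet a fundamental domain for $\Gamma$), then $\|\gamma\|^2 \leq \tilde{N}$. Choosing $P\in \F$ with $\gamma(P)\in \F$, the triangle inequality combined with the fact that $\gamma$ is an isometry gives
\begin{equation*}
\rho(j,\gamma(j)) \leq \rho(j,\gamma(P)) + \rho(\gamma(P),\gamma(j)) = \rho(j,\gamma(P)) + \rho(P,j) \leq 2r.
\end{equation*}
Applying the identity from the first step yields $\|\gamma\|^2=2\cosh\rho(j,\gamma(j))\leq 2\cosh(2r)=\tilde{N}$, so $B(\gamma)\subseteq \F_{\tilde N}$ by construction. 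Consequently $\tilde{\F}=\eta_0^{-1}(\overline{\BQ^3\setminus \F_{\tilde{N}}})$ has all such extraneous $\Gamma$-identifications cut away, and is therefore a fundamental polyhedron for the full group $\Gamma$. Poincar\'e's theorem (Theorem \ref{Poincare}) applied to $\tilde{\F}$ then identifies the side-pairing transformations and gives $\Gamma=\langle \gamma\mid B(\gamma)\subseteq \F_{\tilde N}\rangle$.

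The step I expect to be the main obstacle is justifying that $r$ is finite and that the maximum of $\rho(j,\cdot)$ on $\F$ is attained at a proper vertex. In the cocompact setting $\F$ is compact, so this is immediate from convexity, but in the finite-covolume non-cocompact case one must either restrict ``vertex'' to mean finite vertex and truncate cusp neighbourhoods appropriately, or use that the covolume bound on $\Gamma$ guarantees only finitely many cusps and that the essential identifications occur in the compact core. Once this technical point is addressed, the argument is a clean triangle-inequality estimate driven entirely by the identity $\|\gamma\|^2=2\cosh\rho(j,\gamma(j))$.
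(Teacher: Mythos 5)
Your proof is correct and takes essentially the same approach as the paper: the paper's entire proof consists of the observation that $k=\max\{\rho(\gamma(j),j)\mid \|\gamma\|^2\le r_N\}$ (which is your identity $\|\gamma\|^2=2\cosh\rho(j,\gamma(j))$) followed by a citation of Proposition 3.1 of \cite{jesetall}, whose content is precisely your triangle-inequality estimate bounding the norm of any $\gamma$ identifying two points of $\F$. The non-cocompact caveat you flag at the end is genuine, but the paper leaves it equally unaddressed, since the result it cites covers only the cocompact case.
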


\begin{proof}
A simple computation shows that $k=\textrm{max}\lbrace \rho(\gamma(j),j) \mid \Vert \gamma \Vert^2 \leq r_N \rbrace$. As in the proof of Proposition 3.1 in \cite{jesetall}, one shows
that  $\tilde{\F}$ is a fundamental domain of the  group $\Gamma$.
\end{proof}

Note that for our application to the unit group of integral group rings, the groups we are working with will be discrete subgroups of unit groups of orders in quaternion division algebras or some discrete subgroups of $\SL_2(\C)$. The DAFC cannot be directly applied to such groups, but only to their projections in $\PSL_2(\C)$. More concretely, if one is interested in finding generators for a discrete subgroup $\Gamma$ of $\SL_2(\C)$, one may use the DAFC to get generators for the projection of $\Gamma$ in $\PSL_2(\C)$.
The generators of $\Gamma$ will then be the pre images in $\SL_{2}(\C )$ of the discovered units together with the (finite) kernel of the action of $\Gamma$ on $\HQ^{3}$.
If, on the other hand,  $\Gamma$ is the discrete group  $\SL_{1}(\h(a,\,b, \mathfrak{o}_K))$ determined by a (division) quaternion algebra $\h (a,b, K)$, then one has to proceed as explained in section \ref{secback}: first one has to embed $\SL_{1}(\h(a,\,b, \mathfrak{o}_K))$ in $\SL_2(\C)$ and then consider its image in $\PSL_2(\C)$. To get a set of generators, one has to add the kernel of the action (i.e. ${\rm{I}}(a,b,K)$) to the pre images  of the set of generators given by the DAFC.

As will be shown in several applications, often Dirichlet fundamental polyhedra of discrete groups contain symmetries. These can be used to shorten the DAFC and to list the generators of the discrete group in a more compact manner (see the next section for details). The following proposition of \cite{jeskiefjuretall} that describes some isomorphisms and some involutions of $\PSL_2(\C)$ will be useful to describe some symmetries for the applications under consideration. %See \cite{jeskiefjuretall} for a proof.

\begin{proposition}\cite{jeskiefjuretall}\label{symmetries}
Let $\gamma = \begin{pmatrix} a& b\\ c& d\end{pmatrix} \in \SL_2(\C) \setminus \SU_2(\C)$ with $|a|^{2}+|c|^{2}\neq 1$ (so $\Sigma_{\gamma}$ is a Euclidean sphere by
Proposition~\ref{isogammaupmodel}).
Denote by $\sigma$ the conjugation by the matrix $\begin{pmatrix}
\sqrt{i} & 0\\
0 & \sqrt{-i}
\end{pmatrix}
$, by $\delta$ the conjugation by the matrix $ \begin{pmatrix}
0 & -1\\
1 & 0
\end{pmatrix}$. Let $\tau (\gamma ) =\overline{\gamma}$ denote complex conjugation of the entries of $\gamma$ and define $\phi=\sigma^2\circ \delta\circ\tau$. Then in $\HQ^3$
\begin{enumerate}
\item $P_{\phi (\gamma)}$ is the reflection of $\ P_{\gamma}$ in $S^2$,
\item $\tau$ induces a reflection in the plane spanned by $1$ and $j$, i.e. $P_{\tau (\gamma )}=\overline{P_{\gamma}}$ and $R_{\tau (\gamma )}=R_{\gamma}$ and
\item $\sigma^2$ induces a reflection in the origin, i.e. $P_{\sigma^2 (\gamma )}=-P_{\gamma}$ and $R_{\sigma^2 (\gamma )}=R_{\gamma}$.
\item $\sigma$ restricted to $\partial \HQ^3=\lbrace z  \in \C \rbrace$ induces a rotation of ninety degrees around the point of origin, i.e. $P_{\sigma(\gamma )}=iP_{\gamma}$ and $R_{\sigma(\gamma )}=R_{\gamma}$.
\end{enumerate}
\end{proposition}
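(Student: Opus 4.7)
The plan is to prove all four items by direct computation, using the explicit formulas
$$P_\gamma \;=\; -\frac{\overline{a}b + \overline{c}d}{|a|^2 + |c|^2 - 1}, \qquad R_\gamma^{\,2} \;=\; \frac{1 + \|P_\gamma\|^2}{|a|^2 + |c|^2}$$
from Proposition \ref{isogammaupmodel}, after first carrying out the conjugations defining $\tau, \sigma, \sigma^2, \delta, \phi$ to obtain explicit matrix forms.

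I would first dispatch items (2), (3), (4), which are essentially mechanical substitutions. For (2), $\tau(\gamma) = \overline{\gamma}$: the denominator $|a|^2 + |c|^2$ is conjugation-invariant and $\overline{a}b + \overline{c}d \mapsto \overline{\overline{a}b + \overline{c}d}$, giving at once $P_{\tau(\gamma)} = \overline{P_\gamma}$ and $R_{\tau(\gamma)} = R_\gamma$. For (3), a short computation gives $\sigma^2(\gamma) = \begin{pmatrix} a & -b \\ -c & d \end{pmatrix}$; the denominator is unchanged while the numerator merely flips sign, yielding $P_{\sigma^2(\gamma)} = -P_\gamma$ and unchanged radius. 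For (4), a similar computation gives $\sigma(\gamma) = \begin{pmatrix} a & ib \\ -ic & d \end{pmatrix}$; the denominator is again fixed and the numerator picks up a global factor $i$, so $P_{\sigma(\gamma)} = iP_\gamma$ and $R_{\sigma(\gamma)} = R_\gamma$.

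For item (1), the main step is to evaluate $\phi(\gamma)$. I would note that the conjugating matrices of $\sigma^2$ and $\delta$ anti-commute in $\SL_2(\C)$, so their conjugation actions coincide; together with the fact that $\tau$ commutes with both, composing yields $\phi(\gamma) = \begin{pmatrix} \overline{d} & \overline{c} \\ \overline{b} & \overline{a} \end{pmatrix}$. Applying Proposition \ref{isogammaupmodel} then gives $P_{\phi(\gamma)} = -(\overline{a}b + \overline{c}d)/(|b|^2 + |d|^2 - 1)$. The correct reading of the claim is that the full Euclidean sphere $\Sigma_{\phi(\gamma)}$ is the image of $\Sigma_\gamma$ under inversion in the unit sphere $S^2 \subset \R^3$, so the center of $\Sigma_{\phi(\gamma)}$ should equal $P_\gamma/(\|P_\gamma\|^2 - R_\gamma^{\,2})$ and its radius $R_\gamma/\bigl|\|P_\gamma\|^2 - R_\gamma^{\,2}\bigr|$. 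With the abbreviations $X = |a|^2 + |c|^2 - 1$, $Y = |b|^2 + |d|^2 - 1$, $u = \overline{a}b + \overline{c}d$, one has $P_\gamma = -u/X$, $P_{\phi(\gamma)} = -u/Y$, and a brief calculation yields $\|P_\gamma\|^2 - R_\gamma^{\,2} = (|u|^2 - X)/(X(X+1))$.

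The crucial identity is $|ad - bc|^2 = 1$, which after expansion is equivalent to $|u|^2 = XY + X + Y$. Substituting this gives $|u|^2 - X = Y(X+1)$, hence $\|P_\gamma\|^2 - R_\gamma^{\,2} = Y/X$, and therefore $P_\gamma/(\|P_\gamma\|^2 - R_\gamma^{\,2}) = -u/Y = P_{\phi(\gamma)}$, exactly as required; a parallel manipulation reconciles the radii. The main obstacle is purely bookkeeping: recognizing that the determinant identity $|ad-bc|^2 = 1$ is the bridge between the algebraic formula for $P_{\phi(\gamma)}$ and the geometric statement of inversion in $S^2$, and keeping the various conjugation conventions straight when composing $\sigma^2$, $\delta$ and $\tau$.
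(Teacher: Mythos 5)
Your proof is correct, but for item (1) it takes a genuinely different route from the paper's. The paper argues in the ball model: using the explicit formula of Proposition~\ref{isogammaballmodel} one sees that $P_{\Psi(\phi(\gamma))}$ is obtained from $P_{\Psi(\gamma)}$ by negating the $j$-component (and $R_{\Psi(\phi(\gamma))}=R_{\Psi(\gamma)}$ since the matrix norm is preserved), so the whole sphere $\Sigma_{\Psi(\phi(\gamma))}$ is the reflection of $\Sigma_{\Psi(\gamma)}$ in the plane $z=0$; transferring by the M\"obius map $\eta_0^{-1}$, which carries that plane to $S^2$, gives the statement in $\HQ^3$, with the explicit caveat that the radius is not controlled. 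You instead verify the inversion statement entirely in the upper half-space model, computing $\phi(\gamma)=\begin{pmatrix}\overline{d} & \overline{c}\\ \overline{b} & \overline{a}\end{pmatrix}$ and matching $P_{\phi(\gamma)}=-u/Y$ against the image of the sphere $(P_\gamma,R_\gamma)$ under inversion via the identity $|u|^2=XY+X+Y$ (equivalent to $|\det\gamma|=1$), which yields $\|P_\gamma\|^2-R_\gamma^2=Y/X$. This buys two things the paper's proof does not give: a precise reading of the ambiguous phrase ``reflection of $P_\gamma$ in $S^2$'' (it is the sphere, not the centre, that is inverted), and the transformation law for the radius as well. The trade-off is that the paper's argument is shorter and makes the geometric origin of the symmetry transparent. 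Two small points: your parenthetical that the conjugating matrices of $\sigma^2$ and $\delta$ anti-commute ``so their conjugation actions coincide'' should read ``commute'' (anti-commutation only says $-I$ is central, so the induced automorphisms of $\PSL_2(\C)$ commute; they certainly do not coincide) --- this does not affect your computed $\phi(\gamma)$, which is correct. Also, your formula $P_{\phi(\gamma)}=-u/Y$ tacitly assumes $Y=|b|^2+|d|^2-1\neq 0$; when $Y=0$ your own identity $\|P_\gamma\|^2-R_\gamma^2=Y/X=0$ shows $\Sigma_\gamma$ passes through the origin and its inverse is a plane, a degenerate case the proposition's statement (and the paper's proof) likewise glosses over.
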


\begin{proof}
To prove the first item, first note that $P_{\Psi (\phi (\gamma ))}=\pi (P_{\Psi (\gamma )})$, where $\pi$ is the reflection in the plane $\lbrace (x,y,z) \in \R^3 \mid z=0 \rbrace$. Since $\eta_0:\HQ^3\rightarrow \BQ^3$ is equivariant, it follows that $P_{\phi (\gamma)}$ is the reflection of $\ P_{\gamma}$ in $S^2$, but the radius is not necessarily maintained.
The next three items follow from mere calculations.
\end{proof}

\begin{remark}
In $\HQ^2$, $\phi$ is a reflection in $S^1$, $\tau$ is a reflection in the imaginary axis  and $\sigma^2$ has the same action on $P_{\gamma}$ as
 $\tau$ does, for every $\gamma \in SL_{2}(\C)$.
\end{remark}

Finally the following result of \cite{jeskiefjuretall} will be useful when implementing the algorithm for a given group. Note that it gives an easily verifiable criteria for cocompactness. Its proof follows easily from Proposition \ref{isogammaballmodel}.

\begin{lemma}\cite{jeskiefjuretall}
\label{controlfundom}
Let $\gamma = \left(
\begin{array}{ll}
a & b \\ c & d
\end{array}
\right)\in \SL_2(\C )$ and $\gamma \notin \SU_2(\C)$. Then
\begin{enumerate}
\item $0\notin \Sigma_{\Psi (\gamma )}$.
\item $j\in \Sigma_{\Psi (\gamma )}$ if and only if $\ |a|^2+|c|^2=1$.
\item $j\in { {\rm Interior}} (\Sigma_{\Psi (\gamma )})$ if and only if $\ |a|^2+|c|^2<1$.
\item $-j\in \Sigma_{\Psi (\gamma )}$ if and only if $\ |b|^2+|d|^2=1$.
\item $-j\in {{\rm Interior}}(\Sigma_{\Psi (\gamma )})$ if and only if $\ |b|^2+|d|^2<1$.
\end{enumerate}
\end{lemma}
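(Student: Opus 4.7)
The plan is to deduce all five items by direct computation from the explicit formulas for $P_{\Psi(\gamma)}$, $\|P_{\Psi(\gamma)}\|^2$ and $R_{\Psi(\gamma)}^2$ provided by Proposition~\ref{isogammaballmodel}. The key quantity is the difference $\|P_{\Psi(\gamma)}\|^2 - R_{\Psi(\gamma)}^2$, which controls the relative position of any test point with respect to the sphere $\Sigma_{\Psi(\gamma)}$.

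First I would compute
\[
\|P_{\Psi(\gamma)}\|^2 - R_{\Psi(\gamma)}^2 \;=\; \frac{2+\|\gamma\|^2}{\|\gamma\|^2-2} - \frac{4}{\|\gamma\|^2-2} \;=\; 1.
\]
Since $\gamma\notin \SU_2(\C)$ gives $\|\gamma\|^2>2$, both $\|P_{\Psi(\gamma)}\|$ and $R_{\Psi(\gamma)}$ are real, and the relation $\|P_{\Psi(\gamma)}\|^2 = R_{\Psi(\gamma)}^2 + 1$ says exactly that $\Sigma_{\Psi(\gamma)}$ is orthogonal to $S^2$ (this is consistent with the orthogonality already used inside the proof of Proposition~\ref{isogammaupmodel}). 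In particular $\|P_{\Psi(\gamma)}\| > R_{\Psi(\gamma)}$, so the origin $0$ lies strictly outside the closed Euclidean ball bounded by $\Sigma_{\Psi(\gamma)}$. This disposes of item~1.

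Next, using item~2 of Proposition~\ref{isogammaballmodel}, I would write $P_{\Psi(\gamma)} = z_0 + h_0 j$ with
\[
z_0 = \frac{-2(\overline{a}b+\overline{c}d)}{\|\gamma\|^2-2}, \qquad
h_0 = \frac{(|b|^2+|d|^2)-(|a|^2+|c|^2)}{\|\gamma\|^2-2},
\]
and expand, in the Euclidean metric of $\R^3$,
\[
\|\pm j - P_{\Psi(\gamma)}\|^2 \;=\; |z_0|^2 + (h_0 \mp 1)^2 \;=\; \|P_{\Psi(\gamma)}\|^2 \mp 2h_0 + 1 \;=\; R_{\Psi(\gamma)}^2 + 2(1 \mp h_0).
\]
Thus $j\in\Sigma_{\Psi(\gamma)}$ iff $h_0=1$, and $j$ lies in the interior iff $h_0>1$; similarly $-j\in\Sigma_{\Psi(\gamma)}$ iff $h_0=-1$ and $-j$ is interior iff $h_0<-1$.

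Finally I would translate these conditions on $h_0$ back into conditions on the entries of $\gamma$. Using $\|\gamma\|^2=|a|^2+|b|^2+|c|^2+|d|^2$, the identity $h_0=1$ becomes
\[
(|b|^2+|d|^2)-(|a|^2+|c|^2) \;=\; \|\gamma\|^2-2 \;=\; (|a|^2+|c|^2)+(|b|^2+|d|^2)-2,
\]
which simplifies to $|a|^2+|c|^2=1$, giving item~2; the strict inequality $h_0>1$ becomes $|a|^2+|c|^2<1$, giving item~3. The analogous manipulations for $-j$ yield items~4 and~5. The argument is purely computational; the only thing to be careful about is the sign of $\|\gamma\|^2-2$, which is positive thanks to the hypothesis $\gamma\notin\SU_2(\C)$ (noted just before Theorem~\ref{isosphereB3}), so that multiplying through by this denominator preserves all inequalities. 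I do not anticipate any conceptual obstacle.
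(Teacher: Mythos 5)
Your proof is correct and follows exactly the route the paper intends: the paper gives no written proof of Lemma~\ref{controlfundom} beyond the remark that it ``follows easily from Proposition \ref{isogammaballmodel}'', and your computation (the identity $\|P_{\Psi(\gamma)}\|^2-R_{\Psi(\gamma)}^2=1$ plus the expansion of $\|\pm j-P_{\Psi(\gamma)}\|^2$ using the $j$-component of $P_{\Psi(\gamma)}$) is precisely that verification, carried out correctly.
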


Note that all the above lemmas and propositions may be established in the two dimensional model. Let $\gamma\in \SL_2(\R )$. As a M\"{o}bius
transformation, $\gamma$ acts on $\HQ^2$ and  $\eta_0$ given by the matrix $\begin{pmatrix}
1 & -i \\ -i & 1
\end{pmatrix}$ gives an
isometry between the two models. Proceeding as in the
$3$-dimensional model, we  obtain explicit
formulas for the bisectors. The role of $\ j$ is played by $i$ and, since we are in the commutative setting, calculations are easier.

%*******************************************************************

\section{Applications}
\label{secappl}

In this section we apply the DAFC to several examples. We divide this section in two subsections. First we treat examples coming from division algebras and hence ameliorate \cite{jesetall}. In fact in \cite{jesetall}, the authors were only able to treat small examples, because of absence of concrete formulas. Then in the second subsection we apply our method to matrix algebras and hence show that our method does not require the condition of cocompactness, which was required in \cite{jesetall}.

\subsection{Division Algebras}\label{divalg}

All examples given are  discrete and cocompact subgroups of ${\rm{Iso}}^+(\BQ^n), n=2,3$. In fact we could just apply the DAFC to the examples. However as these groups are cocompact, we get a sort of "starting point", which will make the implementation much easier. To simplify notations, we will from now on omit the notation $\Gamma$ and denote by $\Gamma$ as well the group $\Gamma$ as its embedding in $\SL_2(\C)$. One encounters two situations.
\vp

\noindent {\bf{Case I:}} {\bf{$\Psi(\Gamma)_0$ is trivial}}. In this case we may just apply the DAFC such as it is stated in Proposition \ref{DAFC} (respectively \ref{completeset}). However Lemma \ref{controlfundom} and the fact that the group is cocompact gives us a way of finding a "special" bisector with which we may start. In fact as the group is cocompact, there has to exist $\gamma_0\in \Gamma$ whose bisector $\Sigma_{\Psi(\gamma_0)}$ separates $j$ and the origin $0$. Because of Lemma~\ref{controlfundom} $\gamma_0= \left(
\begin{array}{ll}
a & b \\ c & d
\end{array}
\right)  $ with $|a|^2+|c|^2<1$. Thus we may look for such a $\gamma_0$ with smallest norm and set $V=\mbox{Exterior} (\Sigma_{\Psi(\gamma_0)})\cap \partial \BQ^3$. Then the DAFC stops when $V\subseteq \F_N$, for some $N \in \N$. As stated before this is absolutely not necessary for the DAFC to run. However if one considers the situation in the upper half space model, instead of the ball model, one notices that, because of cocompactness, there has to exist one bisector separating the points $j$ and $\infty$. In fact the upper half space model being not as symmetric as the ball model, this special bisector represents some kind of "upper dome" (see Figures \ref{d15}, \ref{d23} or \ref{25qi} for example) which guarantees the fundamental domain to be compact.
\vp

\noindent {\bf{Case II:}}{\bf{$\Psi(\Gamma)_0$ is non-trivial}}. In this case we first determine a fundamental domain ${\mathcal{F}}_0$  of $\Psi(\Gamma)_0$,  a polyhedron with the origin on its boundary. In all the applications we consider, the stabilizer $\Psi(\Gamma)_0$ is a small group and hence $\F_0$ is relatively easy to determine. As the fundamental domain is given by the intersection of some construction, based on a Dirichlet fundamental domain, with $\F_0$, we can modify the DAFC in such a way that it does not stop when the whole border of $\BQ^n$ is covered by the different $F_i$ for $i \geq 2$, but earlier. In fact let $V=\partial ( \mathcal{F}_0)\cap \partial \BQ^n$. Moreover the definition of the $\F_i$ has to be adapted in such a way that one only considers $B(\gamma)$ for $\gamma \in \Gamma \setminus \Psi^{-1}(\Psi(\Gamma)_0)$, and not in the whole group $\Gamma$. Then we will find a fundamental domain for a subgroup of finite index of $\Gamma$, as well as a generating set for this subgroup, by letting the DAFC stop when $V\subseteq \F_{N}$, for $N \in \N$. As  output we get  $\langle G_0,\gamma_1,\cdots , \gamma_n\rangle$ which has finite index in  $\Gamma$ and where $G_0$ denotes a generating set for $\Psi(\Gamma)_0$ and $\gamma_i$ for $1 \leq i \leq n$ are such that $B(\gamma_i) \in \F_N$.
\vp

\noindent {\bf{Generators for $ \Gamma$:}} Once obtained a fundamental domain $\mathcal{F}'$, say, of a subgroup of finite index in $\Gamma$, one finds a fundamental domain for $\Gamma$ by applying the refined DAFC of Proposition \ref{completeset} and hence also a set of generators.
\vp

We now have all the tools to implement  the DAFC in a way that the reader can follow and easily reproduce.
\vp

Our first example is a generalized quaternion division algebra $\h(a,b, K)$ with $K=\Q (\sqrt{-d})$, with $d$ a positive square free integer. In this case, the unit group of any order of $\h(a,b,K)$  is a cocompact Kleinian group (see \cite[Theorem X.1.2]{elstrodt}). In particular, we revisit the work of \cite{jesetall}, we consider Example 8 of chapter X of \cite{elstrodt} (page  474), and one example in dimension $2$.

To implement the DAFC, we find an additive basis for the ring of integers of $\ K$. This, together with the fact that we only consider reduced norm one elements in the unit group,  leads to a system of Diophantine equations whose  solution set is $\Gamma$. We get a sieve parameterizing the system by the matrix norm of the elements. For small values of $d$ the algorithm easily can be done by hand, and for larger $d$, we made use of the  software package Mathematica,  to get a Dirichlet fundamental domain and hence a set of generators (up to finite index) (cf. the homepage of the third author).

We first revisit \cite{jesetall}. In this case we get a particular nice system of Diophantine equations one of which is to write a number as the sum of four squares.
\vp

Consider $\h(K)=\h(-1,-1,K)$, with $K
=\Q (\sqrt{-d})$, with $d$ a positive square free
integer. We will consider the cases $d=15$ and $d=23$. Suppose that $d\equiv 3\  \mbox{mod}\
4$.  When $d\equiv 7\  \mbox{mod}\  8$ then
$\h(K)$ is a division algebra and thus $\Gamma
=SL_{1}(\h(-1,\,-1,\mathfrak{o}_K))=\{x
\in \h(-1,\,-1,\mathfrak o_K):N(x)=1\},$ acts discretely and cocompactly
on $\HQ^3$. In all other cases $\h(K)$ is a
matrix algebra and one may show  that in
these cases $\Gamma$ is never cocompact (see also
 Chapter VII of \cite{elstrodt}).

We have that
$\Psi(\Gamma)_0 =\Psi (\Gamma_{j})$ and it is easily seen that $\Gamma_{j}=\langle i,j\rangle \cong Q_8$, the
quaternion group of order $8$ and  that $\eta_0^{-1}(\Fe_0)$  can be taken to be that part of the unit ball centered at the origin whose projection on $\partial \HQ^3$ is the upper half of the unit circle, $\{z\in \C \ | \ |z|\leq1, Im(z)\geq 0\}$.

We have that $\mathfrak
o_K=\mbox{span}_{\Z}[1,w]$, where
$w=\frac{1+\sqrt{-d}}{2}$. Write
$u=u_0+u_1i+u_2j+u_3k\in \h (K )$ and
$u_t=x_t+y_tw$. Define $x=(x_0,x_1,x_2,x_3)$ and
$y=(y_0,y_1,y_2,y_3)$. So $u$ is determined by the vector $(x,y)$. Furthermore, in this example we use the embedding $u\mapsto  \gamma_u = \left(
\begin{array}{ll}
u_0+u_1i & u_2+u_3i \\ -u_2+u_3i &  u_0-u_1i
\end{array}\right)$ as used in \cite{jesetall}.

The next lemma gives formulas to compute $\Vert \gamma_u \Vert$ as well as the bisector associated to an element $\gamma_u$. These concepts are necessary to implement the DAFC.

\begin{lemma}
\label{unitsdivision} Let
$u=u_0+u_1i+u_2j+u_3k\in \h (K )$ and $N(u)= \pm 1$. Then
 \begin{eqnarray}
\begin{cases}
\|x\|^2-(\frac{d+1}{4})\|y\|^2=\pm 1\\
2\langle x|y\rangle +\|y\|^2=0.
\end{cases}
\end{eqnarray}
Moreover, the following hold.
\begin{enumerate}[(i)]
  \item $\| \gamma_u \|^2= 2N(u)+d\|y\|^2\in 2\Z$ \label{normgamma}
  \item $\|P_{\Psi (\gamma )}\|^2=\frac{2+2N(u)+d\|y\|^2}{-2+2N(u)+d\|y\|^2}$
  \item  $R^2_{\Psi (\gamma )}= \frac{4}{-2+2N(u)+d\|y\|^2}$.
  \end{enumerate}

  \end{lemma}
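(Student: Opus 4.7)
The plan is to reduce everything to two basic computations: expressing $N(u)$ in the $\Z$-basis $\{1,w\}$ of $\mathfrak{o}_K$, and computing the Euclidean absolute value $|u_t|^2$ of each complex number $u_t = x_t + y_t w$. The Diophantine conditions will come from the first computation, and item (\ref{normgamma}) from the second; items (ii) and (iii) will then fall out of Proposition~\ref{isogammaballmodel}.

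First I would record the key identities for $w = \tfrac{1+\sqrt{-d}}{2}$: namely $w + \overline{w} = 1$, $w\overline{w} = \tfrac{d+1}{4}$, and (equivalently for the algebraic side) $w^2 = w - \tfrac{d+1}{4}$. Since $a=b=-1$, the quaternion norm is $N(u) = u_0^2+u_1^2+u_2^2+u_3^2 \in K$. Substituting $u_t = x_t + y_t w$ and using $w^2 = w - \tfrac{d+1}{4}$, each square expands as $u_t^2 = \bigl(x_t^2 - \tfrac{d+1}{4}y_t^2\bigr) + (2x_ty_t + y_t^2)\,w$. Summing over $t$ and equating the coefficients of $1$ and $w$ in the equation $N(u) = \pm 1$ (using that $\{1,w\}$ is a $\Q$-basis) produces exactly the two claimed equations
\[
\|x\|^2 - \tfrac{d+1}{4}\|y\|^2 = \pm 1, \qquad 2\langle x|y\rangle + \|y\|^2 = 0.
\]

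For (\ref{normgamma}), I would compute $|u_t|^2 = u_t\overline{u_t}$ in $\C$ using $w+\overline{w}=1$ and $w\overline{w} = \tfrac{d+1}{4}$, obtaining $|u_t|^2 = x_t^2 + x_ty_t + \tfrac{d+1}{4}y_t^2$. Summing and using the second Diophantine equation ($\langle x|y\rangle = -\tfrac{1}{2}\|y\|^2$) followed by the first gives $\sum_t |u_t|^2 = N(u) + \tfrac{d}{2}\|y\|^2$. Next I would observe that in the matrix $\gamma_u$ the pairings of conjugate entries $|u_0+u_1i|^2+|u_0-u_1i|^2 = 2|u_0|^2+2|u_1|^2$ and $|u_2+u_3i|^2+|{-}u_2+u_3i|^2 = 2|u_2|^2+2|u_3|^2$ eliminate the imaginary cross terms, so $\|\gamma_u\|^2 = 2\sum_t |u_t|^2 = 2N(u) + d\|y\|^2$. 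The integrality $2N(u)+d\|y\|^2 \in 2\Z$ follows because the second Diophantine relation forces $\|y\|^2 = -2\langle x|y\rangle \in 2\Z$ (and $2N(u)\in 2\Z$ trivially).

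Finally, items (ii) and (iii) are direct substitutions: Proposition~\ref{isogammaballmodel} gives $\|P_{\Psi(\gamma)}\|^2 = \tfrac{2+\|\gamma\|^2}{-2+\|\gamma\|^2}$ and $R^2_{\Psi(\gamma)} = \tfrac{4}{-2+\|\gamma\|^2}$, and replacing $\|\gamma\|^2$ by $2N(u)+d\|y\|^2$ yields the claimed formulas. No real obstacle arises; the only care needed is in the complex-absolute-value computation, where one must remember that the $i$ appearing in the entries of $\gamma_u$ is the complex unit and not the quaternion $i$, so that the bookkeeping of real versus imaginary parts cleanly produces the cancellation leading to $\|\gamma_u\|^2 = 2\sum_t |u_t|^2$.
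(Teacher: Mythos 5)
Your proof is correct and takes essentially the same route as the paper, whose own proof is just a two-line sketch of exactly this plan: expand $N(u)=\pm 1$ in the integral basis $\{1,w\}$ to obtain the Diophantine system, then substitute $\|\gamma_u\|^2$ into Proposition~\ref{isogammaballmodel} for items (ii) and (iii). Your explicit computation of $\sum_t|u_t|^2=N(u)+\tfrac{d}{2}\|y\|^2$ and of the pairwise cancellation giving $\|\gamma_u\|^2=2\sum_t|u_t|^2$ correctly fills in the only steps the paper leaves implicit.
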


\begin{proof} The first part of the statement follows by  using the integral basis and the condition on the reduced norm. The second part follows from Proposition~\ref{isogammaballmodel}. \end{proof}

By Lemma \ref{unitsdivision} item (\ref{normgamma}), the norm of $\gamma_u$ only depends on $\Vert y \Vert^2$ and hence we may order the different sets $\F_i$ appearing in the DAFC by $\Vert y \Vert^2$ instead of by the norm of $\gamma_u$. Also because of the first two defining equations of Lemma \ref{unitsdivision}, we may define the sequence $r_n$ used in the definition of the sets $\F_n$ appearing in Proposition \ref{DAFC} as follows: for $n \geq 1$, $r_n=2n$. So the definition of the new sets $\F'_n$ will be as follows.

\begin{eqnarray}\label{deffi}
\F'_1 & = & \bigcup_{\gamma \in \Gamma} \lbrace B(\gamma) \mid \textrm{ and } \Vert y \Vert ^2 =2 \textrm{ and } \gamma \neq 1 \rbrace \\
\F'_n & = & \bigcup_{i=r_1}^{r_{n-1}} \F'_i \cup \bigcup_{\gamma \in \Gamma} \lbrace B(\gamma) \mid \Vert y \Vert ^2 =2n \textrm{ and } B(\gamma) \not\subseteq \cup_{i=r_1}^{r_{n-1}} \F'_i \rbrace.
\end{eqnarray}

Let $J(x)=(x_1,-x_0,-x_3,x_2)$ and
$S(x)=(-x_3,x_2,-x_1,x_0)$ and $J(y)$ is analogously defined. Then $J$ and $S$ are
skew orthogonal linear maps and $\langle S,J\rangle$ is isomorphic to $Q_8$.

\begin{lemma}
\label{ballcase} Let $u\in \h (K ), N(u)=\pm 1$
and $\gamma = \gamma_u = \left(
\begin{array}{ll}
u_0+u_1i & u_2+u_3i \\ -u_2+u_3i &  u_0-u_1i
\end{array}\right)= \left(
\begin{array}{ll}
a & b \\ c &  d
\end{array}\right)$. Then
\begin{enumerate}
\item
$|a|^2+|c|^2=N(u)+\frac{d}{2}\|y\|^2+\langle
J(x)|y\rangle\cdot \sqrt{d}$ and
$|b|^2+|d|^2=N(u)+\frac{d}{2}\|y\|^2-\langle
J(x)|y\rangle\cdot \sqrt{d}$
\item
$\overline{a}b+\overline{c}d=[\langle
-S(x)|y\rangle +i\langle SJ(x)|y\rangle]\cdot
\sqrt{d}$
\item
$|\overline{a}b+\overline{c}d|^2=
(N(u)+\frac{d}{2}\| y\|^2)^2-d\langle
J(x)|y\rangle^2 -N(u)$

\item If $N(u)=1$ then $R_{\Psi (\gamma)} =\frac{2\sqrt{d}}{d\| y\|}$

\item If $N(u)=1$ then
$P_{\Psi(\gamma)}=\frac{2\sqrt{d}}{d\|
y\|^2}\cdot [\langle S(x)|y\rangle -\langle
S(x)|J(y)\rangle i-\langle J(x)|y\rangle j\ ]$

\end{enumerate}
\end{lemma}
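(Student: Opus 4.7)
The plan is to rely on three ingredients: the explicit embedding $u \mapsto \gamma_u$, the integral basis relations $w + \overline{w}=1$ and $w\overline{w} = (1+d)/4$ of $\mathfrak{o}_K$, and the norm equations from Lemma \ref{unitsdivision} (namely $\|x\|^2 - \frac{1+d}{4}\|y\|^2 = N(u)$ and $2\langle x|y\rangle + \|y\|^2 = 0$). I would organize the computation so that the five items drop out in order, with the later items piggy-backing on the earlier ones.

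First I would compute the ``diagonal'' piece $\sum_t |u_t|^2$. Writing $u_t = x_t + y_t w$ and $\overline{u_t} = (x_t+y_t) - y_t w$, a direct expansion gives $|u_t|^2 = x_t^2 + x_t y_t + \frac{1+d}{4}y_t^2$. Summing over $t$ and substituting $\langle x|y\rangle = -\tfrac{1}{2}\|y\|^2$ into $\|x\|^2 = N(u) + \frac{1+d}{4}\|y\|^2$ collapses the sum to $\sum_t |u_t|^2 = N(u) + \tfrac{d}{2}\|y\|^2$. This already yields $\|\gamma\|^2 = 2N(u) + d\|y\|^2$ of Lemma \ref{unitsdivision} and is half of item 1.

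Next I would expand $|a|^2 = (u_0+u_1 i)(\overline{u_0} - \overline{u_1}i)$ using that $i$ commutes with each $u_t$ and that $u_t\overline{u_s}$ and $u_s\overline{u_t}$ are complex conjugates. After collecting terms this gives $|a|^2 = |u_0|^2 + |u_1|^2 - 2\,\mathrm{Im}(u_1\overline{u_0})$, and similarly for $|b|^2,|c|^2,|d|^2$. Embedding $K \hookrightarrow \C$ via $\sqrt{-d}\mapsto i\sqrt{d}$ so that $\mathrm{Re}(u_t) = x_t + y_t/2$ and $\mathrm{Im}(u_t) = y_t\sqrt{d}/2$, I compute $\mathrm{Im}(u_s\overline{u_t}) = \tfrac{\sqrt d}{2}(y_s x_t - x_s y_t)$. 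Plugging in the relevant pairs and matching with $J(x) = (x_1,-x_0,-x_3,x_2)$ yields $\mathrm{Im}(u_1\overline{u_0}) + \mathrm{Im}(u_2\overline{u_3}) = -\tfrac{\sqrt d}{2}\langle J(x)|y\rangle$, and combining with Step 1 proves item 1. Item 2 is analogous but more intricate: I would expand $\overline{a}b + \overline{c}d$ into eight monomials in $u_t,\overline{u_s}$, split into real and imaginary parts via the same $\mathrm{Im}$-formula, and recognize the resulting combinations as $\sqrt d\,\langle -S(x)|y\rangle$ and $\sqrt d\,\langle SJ(x)|y\rangle$ respectively. This re-indexing is the main obstacle, since one must sort the cross terms carefully; it is the only part of the proof that is not essentially routine.

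For item 3 I would avoid re-computation by invoking the matrix identity $(|a|^2+|c|^2)(|b|^2+|d|^2) - |\overline{a}b + \overline{c}d|^2 = |\det\gamma|^2$ obtained from the $(1,1)$- and $(2,2)$-entries of $\gamma^*\gamma$ and $\det(\gamma^*\gamma) = |\det\gamma|^2$. Since $\det\gamma_u = N(u)$ (a straightforward check), substituting item 1 gives item 3. Finally, items 4 and 5 follow by feeding $\|\gamma\|^2 = 2N(u) + d\|y\|^2$, the difference $(|b|^2+|d|^2)-(|a|^2+|c|^2) = -2\sqrt d\,\langle J(x)|y\rangle$, and item 2 into Proposition \ref{isogammaballmodel}; the cosmetic identity $\langle SJ(x)|y\rangle = \langle S(x)|J(y)\rangle$, immediate from the definitions of $S$ and $J$, is what turns the output of Proposition \ref{isogammaballmodel} into the stated form of item 5.
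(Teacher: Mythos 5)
Your proposal is correct, and it follows the route the paper itself implicitly intends: the paper prints no proof of Lemma \ref{ballcase} at all (it is left as a routine computation in the spirit of the one-line proof of Lemma \ref{unitsdivision}), and your derivation --- expanding $|u_t|^2$ via the integral basis and the two norm equations, computing $\mathrm{Im}(u_s\overline{u_t})=\frac{\sqrt{d}}{2}(y_sx_t-x_sy_t)$ to identify the $J$-, $S$- and $SJ$-combinations, and then feeding items 1 and 2 into Proposition \ref{isogammaballmodel} --- checks out, including the identity $\langle SJ(x)|y\rangle=\langle S(x)|J(y)\rangle$. One caveat that is the statement's fault rather than yours: your determinant identity in item 3 produces $-|\det\gamma_u|^2=-N(u)^2=-1$ as the final term, which agrees with the printed $-N(u)$ only when $N(u)=1$.
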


The formulas show that the centers of the bisectors in the ball model, up to a scalar in $\Z [\sqrt{d}]$, belong to $\Z^3$ and in the upper half space model they belong to  $\Z [i]$. The group of symmetries of a fundamental domain of $\Gamma$  contains $\langle \sigma, \tau\rangle$, where $\sigma$ and $\tau$ are as defined in Proposition \ref{symmetries}, and $\langle  \Gamma, \sigma , \tau\rangle$ is a discrete group. Note that Proposition \ref{symmetries} also indicates another symmetry which is inversion in $S^2$ and which is denoted by $\phi$. This symmetry also acts on the tessellation of $\HQ^3$ induced by $\Gamma$. However $S^2\cap \HQ^3$ is part of the boundary of the fundamental domain of $\Gamma_j$ and hence also part of the boundary of $\F$. Hence in this case this symmetry is lost. In the next example we will work with a group $\Gamma$ having a trivial stabilizer $\Gamma_j$ and hence the symmetry $\phi$ will show up.

With all this information we are ready to implement the DAFC. We do this for the cases $d=15$ and $d=23$. We analyse the case $d=15$ in details. As stated above, $\Psi ( \langle i, j \rangle )=\Psi (\Gamma)_{0}$  and $\eta_0^{-1}(\F_0)$ is as described above. We define $V=\partial \F_0 \cap\partial \BQ^3$ and hence the DAFC looks for $N$ minimal such that $V \subseteq \F_N$. Moreover by using the symmetries we can make the DAFC even faster. In the upper half space model this may be seen in the following way: $V \subseteq \F_N$ means in the upper half space model that the "base" of $\eta_0^{-1}(\F_0)$ is covered by Euclidean spheres. Mathematically this means that for every point $P \in \lbrace z+rj \in \HQ^3 \mid r=0,\ \vert z \vert ^2 \leq 1 \textrm{ and } Im(z)\geq 0 \rbrace$, there exists $\gamma \in \Gamma$ such that $P \in \eta_0^{-1}(B(\gamma))$. We compute the sets $F'_n$ for $n \geq 1$. However because of the symmetries we do not need to "cover" the whole "base" of $\eta_0^{-1}(\F_0)$, but only one quarter (as shown in figure \ref{d152dim}). In fact, by Proposition \ref{symmetries}, if we denote the set of spheres shown in Figure \ref{d152dim} by $X_{15}$, then $X_{15} \cup \sigma(X_{15}) \cup \tau\sigma^2(X_{15} \cup \sigma(X_{15}))$ "covers" the whole $\eta_0^{-1}(\F_0) \cap \partial \HQ^3$. Hence as a supplementary condition in the definition of $\F'_n$ we set that $B(\gamma) \cap (\eta_0^{-1}(\F_0) \cap \partial \HQ^3) \neq \emptyset$.  At $N=4$, the algorithm stops. Moreover including all the conditions we obtain $5$ different $B(\gamma)$ in $\F'_1$, no $B(\gamma)$ in $\F'_2$ nor in $\F'_3$ and finally $4$ more $B(\gamma)$ in $\F'_4$. These give us the $9$ different bisectors shown in Figure \ref{d152dim}. Thus the set $S_{15}=\Psi^{-1}(\Psi(\Gamma)_0) \cup \{g(\gamma) \mid g\in \langle \sigma , \tau
\rangle ,\ \eta_0^{-1}(B(\gamma)) \in X_{15}\}$ gives a generating set for a subgroup of finite index in $\Gamma$.
We may also apply the refined DAFC. Therefore we first compute $\max \lbrace \rho(j,V_i) \mid V_i \textrm{
vertex of } \F \rbrace$ which gives us $\sim 3.33$. We also compute $k$. By Proposition \ref{completeset},
the definition of $k$ is based on the maximal value $\Vert \gamma \Vert^2$ takes. In this case the maximal value is
$N=4$, which gives a maximal value $r_N=8$, which gives the maximal value for $\Vert y \Vert^2$. By Lemma
\ref{unitsdivision} we get a maximal value $\Vert \gamma \Vert^2=2+15\cdot 8=122$. This gives $k=cosh^{-1}(\frac{122}
{2})=\sim 4.8$. Thus $r=\textrm{max}\lbrace \lbrace \frac{k}{2}   \rbrace \cup \lbrace \rho(j,V_i) \mid V_i
\textrm{ vertex of } \F \rbrace=\sim 3.33$ and $\tilde{N}=2cosh(2\cdot r)=\sim 780.6$. As $2+15 \cdot
52=782$, we have to compute $\F'_n$ for $5 \leq n \leq \frac{52}{2}=26$.
However we find that all $\F'_n$, for $5 \leq n \leq 26$, are empty and hence
$S_{15}$ is a generating set for the whole group. All this and the case $d=23$ is summed up in the following theorem.

Note that in the next theorem $\Psi^{-1}(\Psi(\Gamma)_0) = \langle i,j\rangle$.

\begin{theorem}
\label{onecase}
Let $\Gamma
=\SL_{1}(\h(-1,\,-1,\Z [\frac{1+\sqrt{-d}}{2}]))$ and let $Y_d$ be a finite set of units $\gamma$ such that $B(\gamma) \in \F_N$ and let $S_d=\Psi^{-1}(\Psi(\Gamma)_0)\cup \{g(\gamma) \mid g\in \langle \sigma , \tau
\rangle ,\ \gamma \in Y_d\}$.

\begin{enumerate}
\item If $\ d=15$ then $\Gamma =\langle S_d\rangle $, where
\begin{eqnarray*}
Y_{15} & = & \left\lbrace 2+(-1+\omega)i+2j+\omega k,2+2i+(1-\omega)j+ \omega k,-2+(-2+\omega)i+(1+\omega)j \right.\\
&& -2+(-1+\omega)i+\omega j -2k,(-2+\omega)+(1+\omega)i+2j,\\
&& (-4+2\omega)+2i+3j+(-2-2\omega)k,(-4+2\omega)+3i+2j+(-2-2\omega)k,\\
&& \left. (-4+2\omega)+3i-2j+(-2-2\omega)k,(-4+2\omega)+(2+2\omega)i+3j-2k \right\rbrace.
\end{eqnarray*}
\item If $\ d=23$ then $\langle S_d\rangle $ has finite index in $\Gamma$, where
\begin{eqnarray*}
Y_{23}&=& \left\lbrace(-3+\omega )+(2+\omega )i,(-2+\omega )-2i+(1+\omega )j-2k, \right. \\
&&(-3+\omega )-(2+\omega )k,-2-2i+(-2+\omega )j-(1+\omega )k,-2+(-2+\omega )i+(1+\omega )j-2k, \\
&& -4+2i-2j+(1-2\omega)k,-4+(-1+2\omega)i+2j+2k, (-4+2)-2i+(1+\omega)j-(4+\omega)k,\\
&&(-5+\omega)+(2-\omega)i-2j-(2+2\omega)k,(-4+2\omega)+(1+\omega)i+2j-(4+\omega)k, \\
&&(-4-\omega)+(1+\omega)i-2j+(4-2\omega)k,(-5+\omega)+2i+(-2+\omega)j-(2+2\omega)k,\\
&& (-4-\omega)+(-4+2\omega)i+(1+\omega)j+2k,(-5+\omega)+(2+2\omega)k+(2-\omega)j+2k,\\
&&-6-3i+(-2+2\omega)j-2\omega k, -5+(2+2\omega)i+2j+(4-2\omega)k,-6+2\omega i+3j+(2-2\omega)k, \\
&&-5-2i+(-4+2\omega)j-(2+2\omega)k, -5+(-4+2\omega)i+(2+2\omega)j-2k,\\
 && \left. -6+2\omega i -3j+(2-2 \omega)k \right\rbrace.
\end{eqnarray*}
\end{enumerate}
\end{theorem}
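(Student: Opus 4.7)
The plan is to apply the DAFC in Case~II (non-trivial stabilizer of $j$) to both groups and to verify the covering condition by brute-force enumeration, using the explicit bisector formulas of Lemma~\ref{unitsdivision} and Lemma~\ref{ballcase} together with the symmetries of Proposition~\ref{symmetries}. Since $15,23\equiv 7\pmod 8$, the algebra $\h(-1,-1,K)$ with $K=\Q(\sqrt{-d})$ is a division algebra, so $\Gamma$ is cocompact in $\mathrm{Iso}^+(\HQ^3)$. I would first check that $\Gamma_j=\langle i,j\rangle\cong Q_8$: an integral unit $u$ of reduced norm one fixes $j$ iff $\gamma_u\in\SU_2(\C)$, equivalently $|a(\gamma_u)|^2+|c(\gamma_u)|^2=1$ by Lemma~\ref{controlfundom}, and the only integral solutions in $\mathfrak{o}_K$ are $\pm 1,\pm i,\pm j,\pm k$. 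A fundamental polyhedron $\F_0$ for $\Psi(\Gamma)_0$ on $\BQ^3$ can then be chosen so that $\eta_0^{-1}(V)$, where $V=\partial\F_0\cap\partial\BQ^3$, is the closed upper half of the unit disk in $\C$.

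Next I would reduce the search space. By Lemma~\ref{unitsdivision}(i), $\|\gamma_u\|^2=2+d\|y\|^2$ on norm-one elements, so ordering by matrix norm coincides with ordering by $\|y\|^2=2n$; and for each such $n$ the two defining equations of Lemma~\ref{unitsdivision} constitute a bounded Diophantine system in $(x,y)\in\Z^4\times\Z^4$ which is readily enumerated. The action of $\langle\sigma,\tau\rangle$ from Proposition~\ref{symmetries} on the tessellation then allows me to restrict attention to bisectors whose interior meets a single quadrant of $\eta_0^{-1}(V)$, reducing the workload by a factor of four.

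For $d=15$ I would execute the algorithm and verify that, within this quadrant, $\F'_1$ of~(\ref{deffi}) receives exactly five isometric spheres (those from the $\|y\|^2=2$ units), $\F'_2=\F'_3=\varnothing$, and $\F'_4$ adds four more, after which the $\langle\sigma,\tau\rangle$-orbit of their union covers $V$. Proposition~\ref{DAFC} then yields a generating set for a subgroup of finite index. To upgrade to $\Gamma=\langle S_{15}\rangle$ I would invoke the refined DAFC: with $r_N=8$ we get $k=\cosh^{-1}(61)\approx 4.8$, and the computed vertices of the candidate $\F$ satisfy $\rho(j,V_i)\le 3.33$, so $\tilde N=2\cosh(2r)\approx 780.6$. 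Via $\|y\|^2\le(\tilde N-2)/d$ this reduces to checking that $\F'_n\setminus\bigcup_{i<n}\F'_i=\varnothing$ for $5\le n\le 26$. This exhaustive but finite verification is the principal technical obstacle for $d=15$, and I would delegate it to the Mathematica implementation referenced in the paper.

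For $d=23$ the same procedure applies, yielding a longer list; the enumeration and covering test produce the twenty elements of $Y_{23}$ as representatives of the side-pairing transformations detected, up to $\langle\sigma,\tau\rangle$-symmetry and composition with $\Psi^{-1}(\Psi(\Gamma)_0)$, before $V\subseteq\F'_N$ is first satisfied. Proposition~\ref{DAFC} then gives the stated finite-index conclusion, and no refined DAFC is needed since the theorem only asserts $[\Gamma:\langle S_{23}\rangle]<\infty$. In both cases the hard step is purely computational: confirming via the formulas for $P_{\Psi(\gamma)}$ and $R_{\Psi(\gamma)}$ in Proposition~\ref{isogammaballmodel} that the enumerated isometric spheres really cover the prescribed portion of $\partial\BQ^3$, and in the $d=15$ case ruling out any contribution from the large range of intermediate levels dictated by the refined DAFC.
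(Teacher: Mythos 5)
Your proposal follows essentially the same route as the paper's own argument: DAFC in Case II with stabilizer $\Psi(\Gamma)_0\cong Q_8$, ordering the search by $\|y\|^2$ via Lemma~\ref{unitsdivision}, the quarter-domain reduction through $\langle\sigma,\tau\rangle$, the count $5+0+0+4=9$ spheres at $N=4$, and the refined DAFC bound $\tilde N\approx 780.6$ forcing the (empty) check of levels $5\le n\le 26$ to upgrade $d=15$ to all of $\Gamma$, with $d=23$ left at finite index exactly as in the text. The only quibble is that your stabilizer computation should rest on $\gamma_u\in\SU_2(\C)$, i.e.\ $\|\gamma_u\|^2=2$, rather than on Lemma~\ref{controlfundom}, which characterizes bisectors passing through $j$ for non-unitary $\gamma$ — but this does not affect the conclusion that $\Gamma_j=\langle i,j\rangle$.
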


Part (a) of the figures below
shows the projection on $\partial \HQ^3$ of  the bisectors of the elements in $Y_d$
 and part (b) shows the fundamental domain of the subgroup $\langle S_d\rangle$.

\begin{figure}[H]
\centering
\subfigure[Projection of bisectors in $Y_{15}$.]{
\includegraphics[scale=0.5]{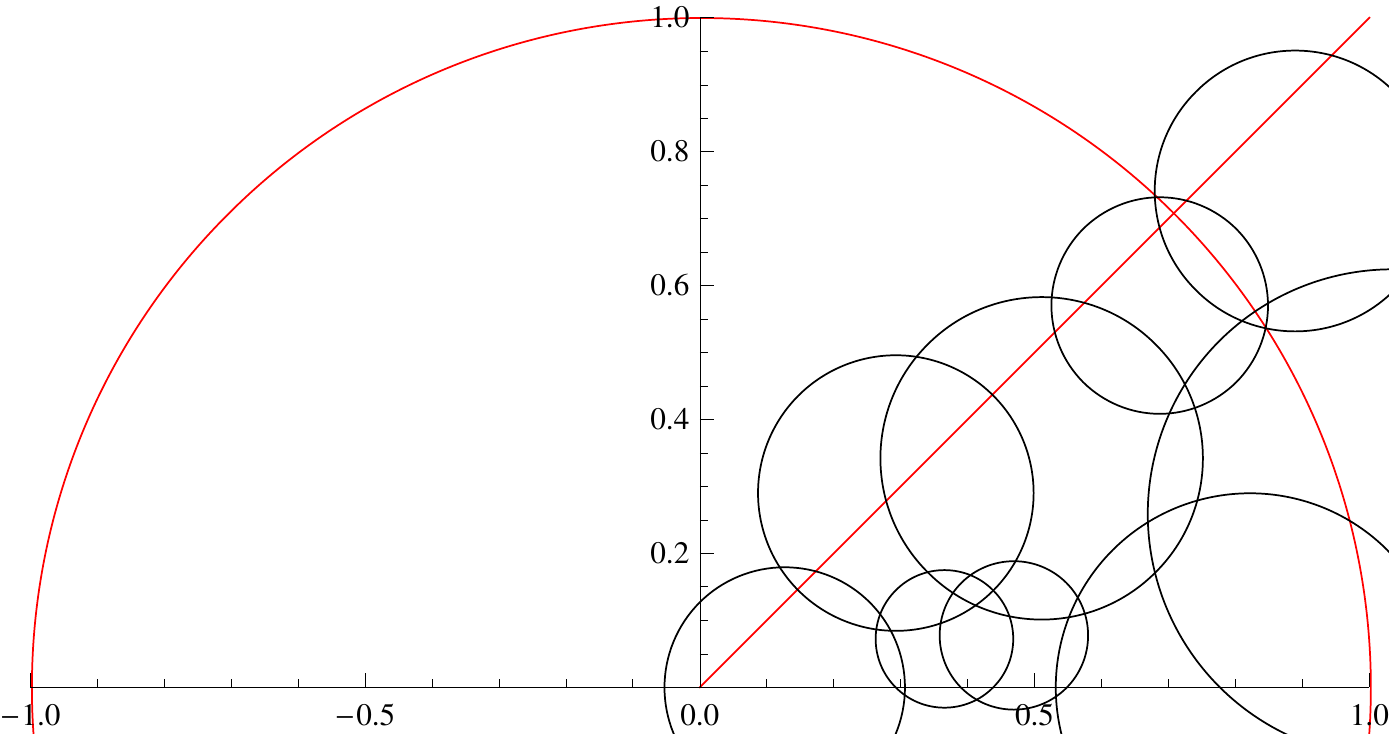}
\label{d152dim}
}
\subfigure[Fundamental domain of $\ \langle S_{15}\rangle$. ]
{\includegraphics[scale=0.5]{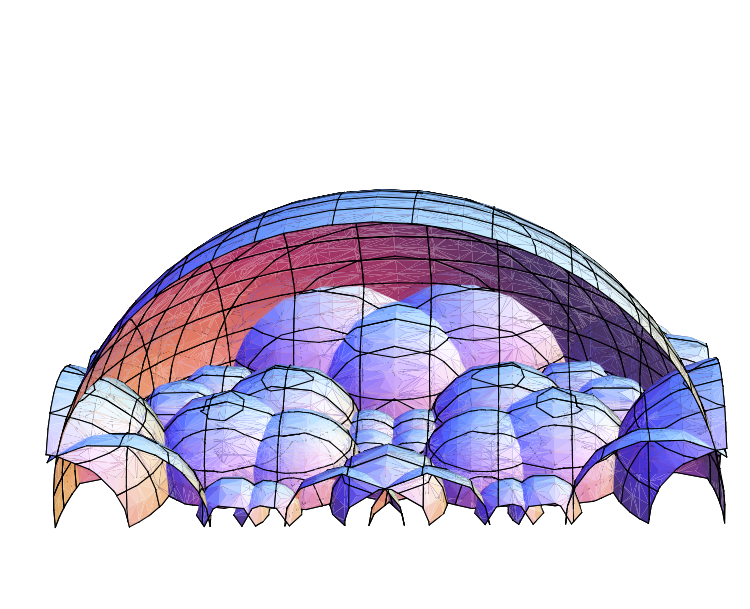}
\label{d15}
}
\caption{}
\end{figure}

\begin{figure}[H]
\centering
\subfigure[Projection of bisectors in $Y_{23}$.]{
\includegraphics[scale=0.5]{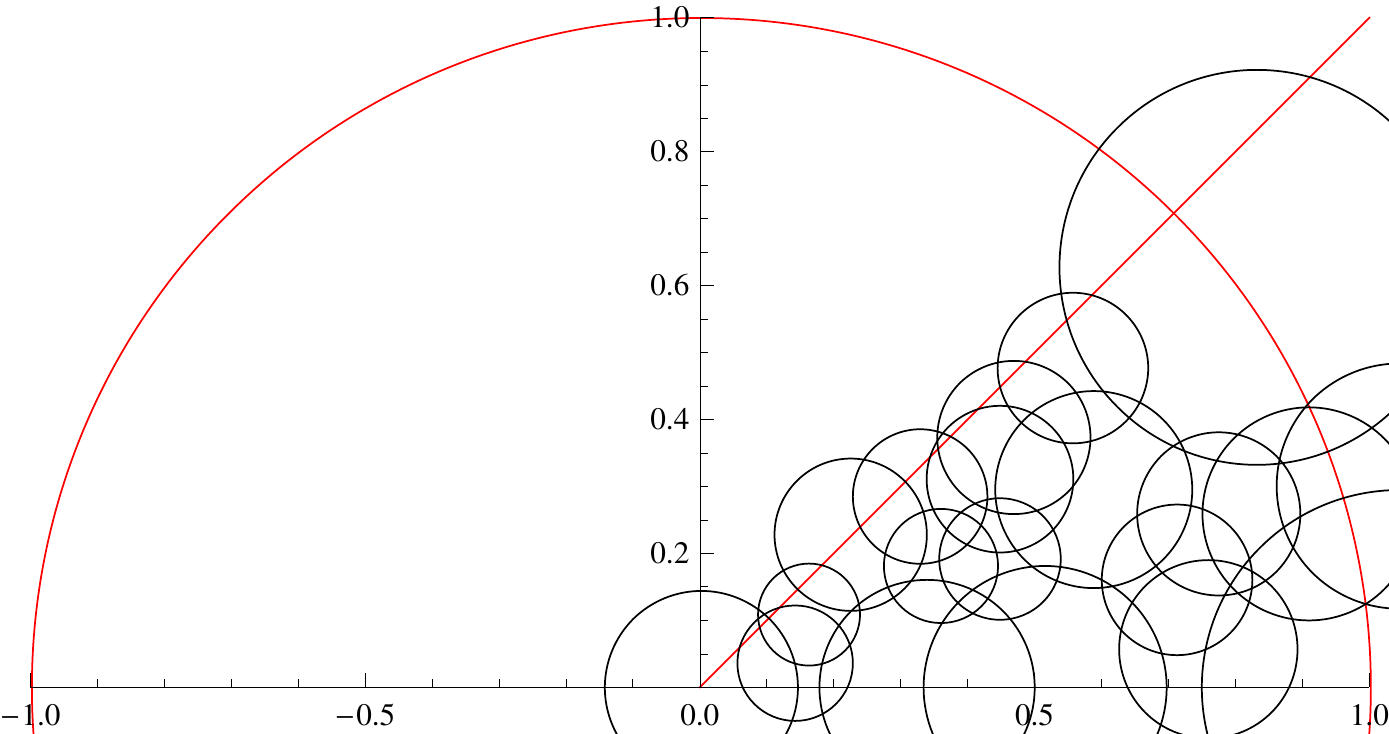}
\label{d232dim}
}
\subfigure[Fundamental domain of $\ \langle S_{23}\rangle$.]{
\includegraphics[scale=0.5]{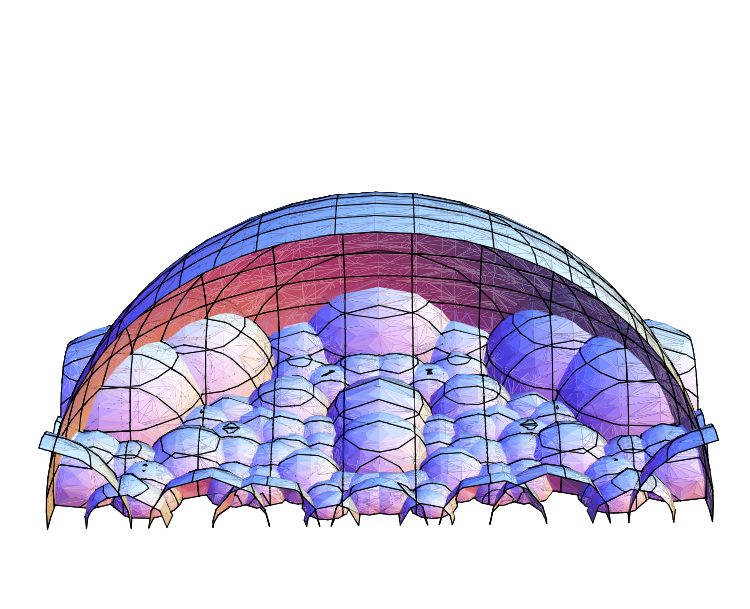}
\label{d23}
}
\caption{}
\end{figure}

\begin{remark}
In the context of  a division algebra, Pell and Gauss units were constructed in \cite{jpsf, jurcal}.  For some of these units $\gamma$, say, we have that  $P_{\gamma}=0$ and these are typically associated to hyperbolic elements.   Together, these units seem to generate a subgroup of finite index. If so, these can be used as generic units in the group ring problem. Still in the same context, in \cite{jpsf} it was proved that $SL_{1}(\h(-1,\,-1,\mathfrak{o}_K))$ is a hyperbolic group for all positive $d \equiv 7 \pmod 8$. This line of classification was introduced and initiated in \cite{jpp}.
\end{remark}

The next cases we treat  are  division algebras of  the form $\h (a,b,\Q(i')), (i'=\sqrt{-1})$, with $ 0 < a < b$ integers  and $\Gamma= \SL_1(\h(a,b,\mathbb{Z}[i']))$. For $a=2$ and $b=5$, this  can be found in \cite[Section X]{elstrodt}, and it is commensurable with $\mbox{PO}_4(\Z, q)$, where $q(x,y,z,w)=-2x^2-5y^2-10z^2+w^2$.

Let $u\in \Gamma$ and write  $u= u_0+u_1i+u_2j+u_3k$, with $u_t \in \mathbb{Z}[i']$. In this case $ \gamma_u=$\\
$ \begin{pmatrix}\label{mat}
u_0+u_1\sqrt{a}&u_2\sqrt{b}+u_3\sqrt{ab} \\
u_2\sqrt{b}-u_3\sqrt{ab}&u_0-u_1\sqrt{a}
\end{pmatrix}$, as stated in section \ref{secback}. Let  $u_t=x_t+y_ti'$, $x=(x_0,x_1,x_2,x_3)$, $y=(y_0,y_1,y_2,y_3)$, $q(x) =x_0^2-ax_1^2-bx_2^2+abx_3^2$ and let $B(x,y)$ be the bilinear form associated to $q(x)$. The next lemma describes $\Gamma$ as a subgroup of $\PSL_2(\C )$ in terms of a system of Diophantine equations.

\begin{lemma}\label{unitshabi}
Let $u=u_0+u_1i+u_2j+u_3k \in \SL_1(\h(a,b,\mathbb{Z}[i']))$, and let $\gamma_u$ and the vectors $(x_0,x_1,x_2,x_3)$ and $(y_0,y_1,y_2,y_3)$ be as described above. Then
\begin{eqnarray*}
\begin{cases}
q(x)-q(y)=1\\
B(x,y)=0
\end{cases}
\end{eqnarray*}
Moreover if we set $x_0^2+ay_1^2+by_2^2+abx_3^2=n\in \N$, we obtain the following system.
\begin{eqnarray}\label{system}
\begin{cases}
x_0^2+ay_1^2+by_2^2+abx_3^2=n\\
y_0^2+ax_1^2+bx_2^2+aby_3^2=n-1\\
B(x,y)=0\\
\Vert \gamma_u \Vert^2=4n-2
\end{cases}\end{eqnarray}
\end{lemma}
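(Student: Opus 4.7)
The proof will be an entirely computational verification, so the plan is to unpack the defining equation $N(u)=1$ into its real and imaginary parts and then rearrange. First I would substitute $u_t=x_t+y_t i'$ (with $x_t,y_t\in\Z$) into the norm form
\begin{equation*}
N(u)=u_0^2-au_1^2-bu_2^2+abu_3^2.
\end{equation*}
Since $u_t^2=(x_t^2-y_t^2)+2x_ty_t i'$, grouping real and imaginary parts gives
\begin{equation*}
N(u)=\bigl[q(x)-q(y)\bigr]+2i'\bigl[x_0y_0-ax_1y_1-bx_2y_2+abx_3y_3\bigr].
\end{equation*}
Since $i'\notin\R$ and the bracketed coefficients are integers, $N(u)=1$ forces $q(x)-q(y)=1$ and $x_0y_0-ax_1y_1-bx_2y_2+abx_3y_3=0$. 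The second expression is exactly $B(x,y)$, the bilinear form associated with $q$, which yields the first two equations.

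Next, assuming $n:=x_0^2+ay_1^2+by_2^2+abx_3^2$, I would rewrite the identity $q(x)-q(y)=1$ by grouping the positive ``$x$-even/$y$-odd'' summands together:
\begin{equation*}
\bigl(x_0^2+ay_1^2+by_2^2+abx_3^2\bigr)-\bigl(y_0^2+ax_1^2+bx_2^2+aby_3^2\bigr)=1,
\end{equation*}
which immediately gives $y_0^2+ax_1^2+bx_2^2+aby_3^2=n-1$. This handles the second line of system \eqref{system}, and the third line was already obtained.

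For the matrix norm, I would just plug into the explicit embedding $u\mapsto\gamma_u$ displayed just before the lemma. The cross terms $\pm u_0\overline{u_1}\sqrt{a}$ and $\pm u_2\overline{u_3}\sqrt{ab}\sqrt{b}$ cancel between the two diagonal (resp. anti-diagonal) entries, leaving
\begin{equation*}
\|\gamma_u\|^2=2|u_0|^2+2a|u_1|^2+2b|u_2|^2+2ab|u_3|^2.
\end{equation*}
Since $|u_t|^2=x_t^2+y_t^2$, splitting the sum as $2n+2(n-1)$ using the two previously established identities gives $\|\gamma_u\|^2=4n-2$, completing the proof.

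The only ``step'' that requires any care is the bookkeeping of which variables appear with positive and which with negative signs when splitting $q(x)-q(y)$ so as to recognise the quantity $n$; the rest is direct expansion. I do not anticipate any real obstacle.
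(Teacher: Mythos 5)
Your proof is correct and follows essentially the same route as the paper: the paper derives the first two equations from $\det(\gamma_u)=1$, which is literally the identity $N(u)=1$ you expand (since $\det(\gamma_u)=N(u)$), and the remaining equations are obtained by the same substitution of $n$ and the same direct computation of $\|\gamma_u\|^2$. No gaps.
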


\begin{proof}
The first set of equations follows from the fact that the determinant of $\gamma_u$ is $1$, $\Gamma$ being a subgroup of $\PSL_2(\C)$. For the three first equations of the second set we replace $x_0^2+ay_1^2+by_2^2+abx_3^2$ by $n$ in the first set. The last equation is just mere calculation.
\end{proof}

Here  $\Gamma_j=1$ and so we are in Case I. Taking $a=2,b=5$ and $n=2$ in system (\ref{system}), we find $u$ such that  $ \gamma_u=\begin{pmatrix}
i'-i'\sqrt{2} & 0 \\
0 & i'+i'\sqrt{2}
\end{pmatrix}$ and thus  $\vert a \vert^2 + \vert c \vert^2 = 3-2\sqrt{2}<1$ and hence the bisector associated to this unit gives a starting point to run the DAFC (as explained in Case I). Moreover Proposition \ref{unitshabi} shows that the sequence $r_n$ of Proposition \ref{DAFC} may be taken as $r_n=4n-2$. Using Mathematica, we compute $N=102$, which means that the units needed in a generating set all have norm smaller than $4\cdot 102-2$. In this case the fundamental domain contains again the symmetries given by the group $\langle \sigma ,\tau \rangle$ described in Proposition \ref{symmetries}. Moreover the symmetry $\phi$ from Proposition \ref{symmetries} may also be taken into account. Indeed, as stated in the proof of Proposition \ref{symmetries}, in $\BQ^3$ the map $\Psi(\phi)$ is the reflection $\pi$ in the plane $\lbrace (x,y,z) \in \R^3 \mid z=0 \rbrace$. As $\Psi(\Gamma)_0$ is trivial here, we have to cover the whole $\partial\BQ^3$ (recall that if $\Psi(\Gamma)_0$ is not trivial, we only have to cover the part $\partial \F_0 \cap \partial \BQ^3$) and hence the symmetry $\pi$ and thus also $\phi$ may be taken into account.
Thus the group $\langle \sigma ,\tau , \phi \rangle$ is contained in the group of symmetries of $\Gamma$.

\begin{theorem}
 In $\SL_1(\h(2,5,\mathbb{Z}[i']))$, the  subgroup $S_{2,5}(\Z [i]) = \langle  -1,g(\gamma) \mid g\in \langle \sigma^2,\tau , \phi \rangle  ,\gamma \in Y \rangle $, where $Y$ is given below,  has finite index.

\begin{eqnarray*}
Y &=& \left\lbrace i'-i'i, 2i'-i'j, 2-i'i-j, 2-2i+i'j, 3i'+k, 3+i'i-i'k, 2+i'i-i'j-i'k, \right. \\
&&4i'-i'i-i'j+k, 3-3i+k, 2-3i+i'j+k, 3-i'i-2j-k,6-i'j-2i'k, \\
&&2+4i'i-i'j-2i'k,6-3j-k, 2-4i'i-3j-k, (1-4i')+(4+2i')i+(2+2i')j+2k, \\ &&(1+4i')+(4-2i')i+(-2-2i')j-2k,3-7i+4i'j+k, \\
&&\left. (8+3i')+(-4+2i')i+(-2-2i')j+(1-2i')k \right\rbrace .
\end{eqnarray*}

\end{theorem}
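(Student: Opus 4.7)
The plan is to apply the DAFC (Proposition~\ref{DAFC}) to $\Gamma = \SL_1(\h(2,5,\mathbb{Z}[i']))$ viewed as a discrete cocompact subgroup of $\PSL_2(\C)$ via the embedding $u \mapsto \gamma_u$ recalled before Lemma~\ref{unitshabi}. Since $a=2, b=5$ yields a division algebra and $\Q(i')$ is imaginary quadratic, the group is indeed discrete and cocompact in $\mbox{Iso}^+(\HQ^3)$. Moreover, a direct check shows $\Gamma_j$ is trivial, so we are in Case~I: the DAFC can be run without first computing a fundamental domain for $\Psi(\Gamma)_0$, and the algorithm terminates when every point of $\partial \BQ^3$ is covered by some $B(\gamma)$.

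First I would enumerate candidate units by solving the system (\ref{system}) from Lemma~\ref{unitshabi} for increasing values of $n \in \N$; note that by that lemma $\|\gamma_u\|^2 = 4n-2$, so the natural ordering $r_n = 4n-2$ replaces the abstract norm-ordering in the definition of the sets $\F_n$ of Proposition~\ref{DAFC}. The smallest admissible value $n=2$ yields the unit $u = i' - i'i$, whose associated bisector satisfies $|a|^2 + |c|^2 = 3 - 2\sqrt{2} < 1$; by Lemma~\ref{controlfundom}(iii) this sphere separates $0$ from $j$, providing the ``upper dome'' starting point recommended for Case~I. Using Proposition~\ref{isogammaballmodel} together with the embedding formula, for each candidate $u$ one computes $P_{\Psi(\gamma_u)}$ and $R_{\Psi(\gamma_u)}$ and adjoins $B(\gamma_u)$ to the current $\F_n$ only if it is not already contained in the previously constructed $\F_{n-1}$, exactly as in the recursive definition preceding Proposition~\ref{DAFC}.

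To make the search tractable, I would exploit the symmetries described in Proposition~\ref{symmetries}. Since $\Psi(\Gamma)_0$ is trivial the full boundary $\partial \BQ^3$ needs to be covered, so the reflection $\phi$ in $S^2$ (which is lost when the stabilizer is nontrivial, because $S^2 \cap \HQ^3$ lies on $\partial \F_0$) is now available in addition to $\sigma$ and $\tau$; the group $\langle \sigma^2, \tau, \phi\rangle$ acts on the tessellation, so it suffices to cover a fundamental region for this symmetry group on $\partial \BQ^3$ and then orbit the resulting bisectors. This is why the generating set in the statement is written as $\{g(\gamma) \mid g \in \langle \sigma^2, \tau, \phi\rangle,\ \gamma \in Y\}$ together with the central unit $-1$ (which generates the kernel of $\SL_1 \to \PSL_1$, since $\mathrm{I}(2,5,\Q(i')) = \{\pm 1\}$). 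One then iterates the DAFC until the first index $N$ at which $\partial \BQ^3$ is contained in $\F_N$; running the computation in Mathematica (as indicated in the text) yields $N = 102$, and the elements $\gamma$ with $B(\gamma)$ contributing a new side, modulo the symmetry group, form precisely the set $Y$ displayed in the theorem.

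The main obstacle is purely computational: Lemma~\ref{volume} guarantees non-redundancy and the theorems of Greenberg--Garland--Raghunathan guarantee finite termination (cf.\ the proof of Proposition~\ref{DAFC}), but one must still verify by machine that no further side-pairings appear for $n \le 102$ after quotienting by $\langle \sigma^2, \tau, \phi\rangle$ and that the resulting polyhedron is exact. Once this is checked, Theorem~\ref{Poincare} applied to the polyhedron $\F = \bigcap_{\gamma \in L} D_{\gamma^{-1}}(j)$ of Proposition~\ref{DAFC} shows that the projections of the elements $g(\gamma)$ for $g \in \langle \sigma^2,\tau,\phi\rangle$ and $\gamma \in Y$ generate a subgroup of $\PSL_1(\h(2,5,\mathbb{Z}[i']))$ of finite index (the index bounded by the number of $\Gamma$-translates of $\F$ needed to form a fundamental domain of $\Gamma$ itself, as in Proposition~\ref{completeset}). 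Pulling back to $\SL_1$ and adjoining the kernel element $-1$ gives the claimed finite-index subgroup $S_{2,5}(\Z[i])$.
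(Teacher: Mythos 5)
Your proposal follows essentially the same route as the paper: run the DAFC in Case I (trivial stabilizer $\Psi(\Gamma)_0$) using the Diophantine system of Lemma~\ref{unitshabi} with the ordering $r_n=4n-2$, take the $n=2$ unit $i'-i'i$ (whose bisector has $|a|^2+|c|^2=3-2\sqrt{2}<1$) as the starting ``dome'', exploit the symmetry group $\langle\sigma^2,\tau,\phi\rangle$ (available here precisely because the stabilizer is trivial), terminate at $N=102$ by machine computation, and adjoin $-1$ as the kernel of the action. This matches the paper's argument, which is likewise a direct application of Propositions~\ref{DAFC} and~\ref{symmetries} plus the Mathematica computation.
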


Figures \ref{25qi2dim} and \ref{25qi} show a part of the fundamental domain  and  the projection on $\partial \HQ^3$ of the fundamental domain. The bisectors coming from the elements in $Y$ are drawn in bold face.

\begin{figure}[H]
\centering
\subfigure[Projection of the fundamental domain of $S_{2,5}(\Z \lbrack i \rbrack )$.]{
\includegraphics[scale=0.5]{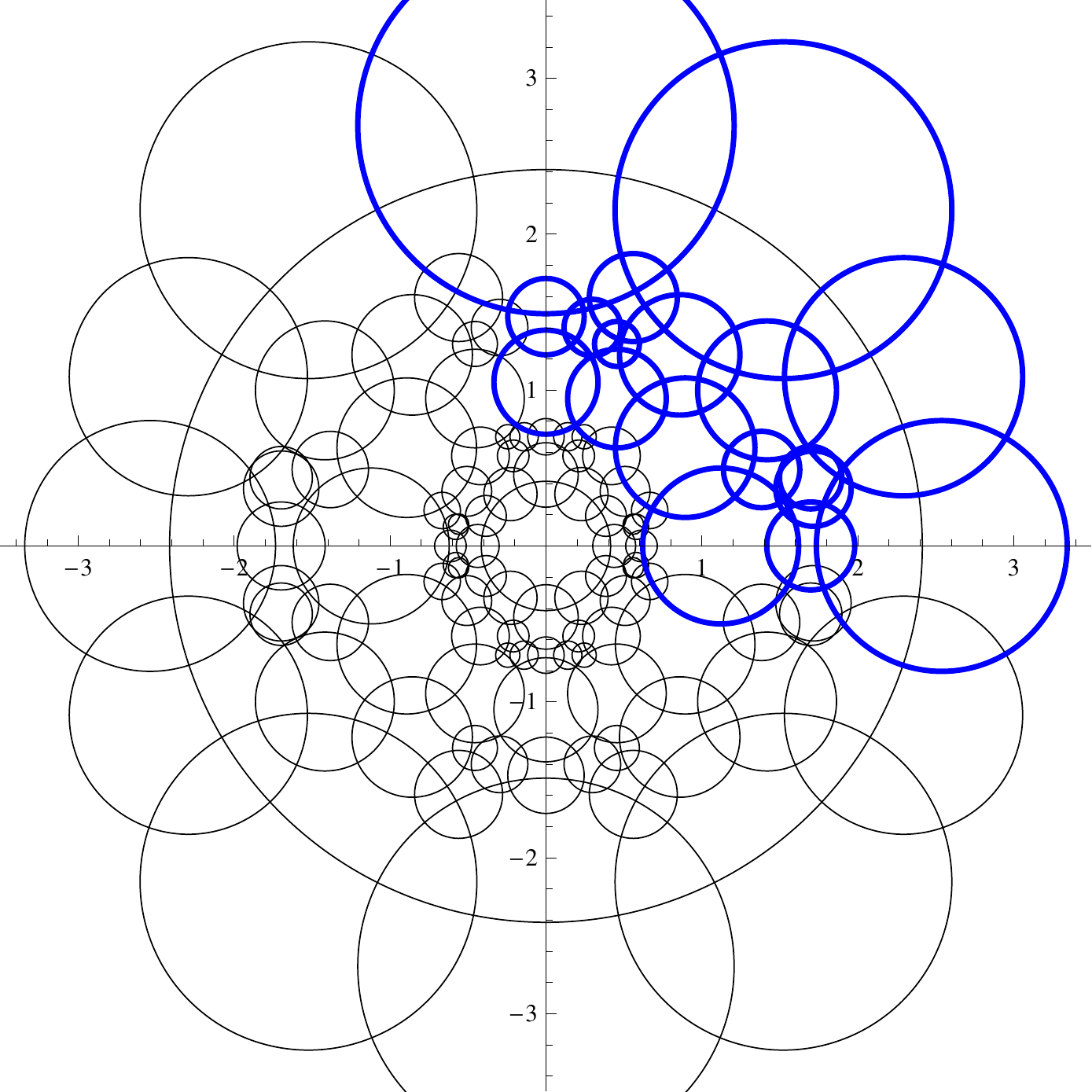}
\label{25qi2dim}
}
\subfigure[Part of the fundamental domain of $S_{2,5}(\Z \lbrack i \rbrack )$. ]{
\includegraphics[scale=0.5]{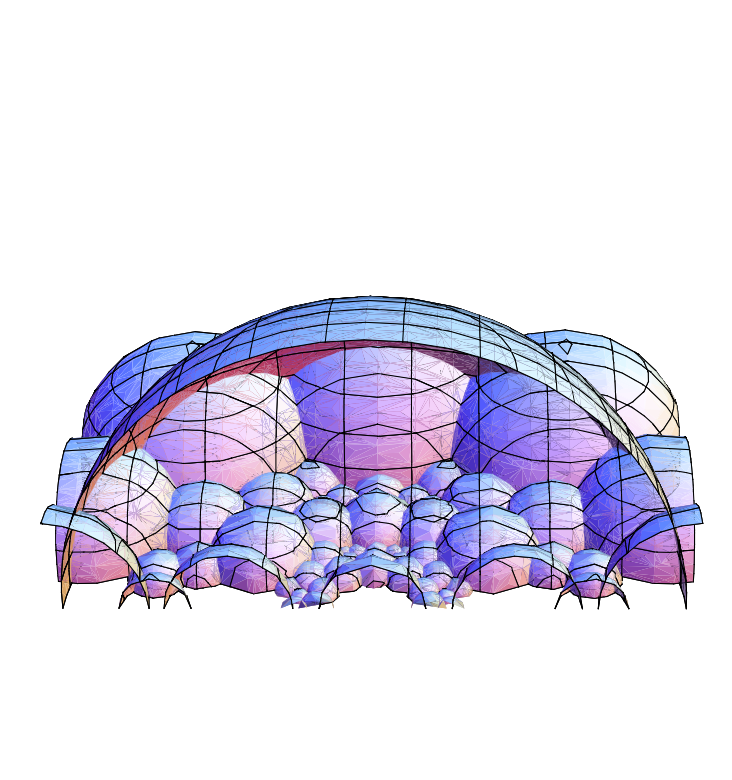}
\label{25qi}
}
\caption{}
\end{figure}

We finish this section with an example of orders in division algebras over $\Q$, i.e., with Fuchsian groups.
Consider a division algebra of the type $\h(a,b,\mathbb{Q})$ with $a>0, b>0$ integers (for example with $a=2$ and $b=5$). We consider the following order $\h(a,b,\mathbb{Z})$.
For $u=x_0+x_1i+x_2j+x_3k \in \h(a,b,\mathbb{Z})$   we have
$ \gamma_u =
\begin{pmatrix}
x_0 + x_1\sqrt{a} & x_2\sqrt{b} + x_3 \sqrt{ab} \\
x_2\sqrt{b} - x_3 \sqrt{ab} & x_0 - x_1\sqrt{a}
\end{pmatrix}.$  Letting $x=(x_0,x_1,x_2,x_3)$, $q_1(x)=x_0^2-ax_1^2-bx_2^2+abx_3^2$ and $q_2(x)= x_0^2+ax_1^2+bx_2^2+abx_3^2$, we obtain the following lemma.

\begin{lemma}
\begin{eqnarray} \begin{cases}
q_1(x)=1 \\
2q_2(x) =\|\gamma\|^2
\end{cases} \end{eqnarray}
Moreover if $q_2(x_0,0,0,x_3)=n$ we obtain
\begin{eqnarray} \begin{cases}
x_0^2+abx_3^2=n \\
ax_1^2+bx_2^2=n-1\\
\| \gamma\|^2= 4n-2, n\in \mathbb{N}
\end{cases} \end{eqnarray}
\end{lemma}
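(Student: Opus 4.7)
The plan is to verify this lemma by direct computation, exploiting the fact that the entries of $\gamma_u$ now live in $\R$ (so complex moduli become ordinary squares) and that the hypothesis $u \in \SL_1(\h(a,b,\Z))$ implicit in the context forces $N(u)=1$. The lemma really consists of two computations (the general system and the specialized system), and the second follows from the first by a substitution, so the work concentrates on the first part.

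For the first equation $q_1(x)=1$, I would simply observe that since $\gamma_u$ is the matrix representation of $u$ under the embedding $\h(a,b,K) \to M_2(\C)$ recalled in Section~\ref{secback}, one has $\det(\gamma_u) = (x_0+x_1\sqrt{a})(x_0-x_1\sqrt{a}) - (x_2\sqrt{b}+x_3\sqrt{ab})(x_2\sqrt{b}-x_3\sqrt{ab}) = x_0^2 - ax_1^2 - bx_2^2 + abx_3^2 = q_1(x)$, which equals the reduced norm $N(u)$, and this is $1$ by hypothesis. For the second equation $2q_2(x) = \|\gamma_u\|^2$, I would expand each of the four squared moduli directly: $(x_0 \pm x_1\sqrt{a})^2 = x_0^2 + ax_1^2 \pm 2x_0 x_1\sqrt{a}$ and $(x_2\sqrt{b} \pm x_3\sqrt{ab})^2 = bx_2^2 + abx_3^2 \pm 2bx_2 x_3\sqrt{a}$, then sum the four expressions. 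The cross terms cancel pairwise and the result is $2(x_0^2 + ax_1^2 + bx_2^2 + abx_3^2) = 2q_2(x)$, as desired.

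For the refined system, I would set $n := q_2(x_0,0,0,x_3) = x_0^2 + abx_3^2$ so that the first refined equation holds by definition. Subtracting this from $q_1(x)=1$ gives $x_0^2 + abx_3^2 - (ax_1^2 + bx_2^2) = 1$, hence $ax_1^2 + bx_2^2 = n-1$, which is the second refined equation. Finally, combining the two yields $q_2(x) = n + (n-1) = 2n-1$, and the already established identity $\|\gamma_u\|^2 = 2q_2(x)$ gives $\|\gamma_u\|^2 = 4n-2$. That $n \in \N$ follows because $x_0,x_3 \in \Z$ and $a,b > 0$ so $n \geq 0$, and $n=0$ would force $x_1 = x_2 = 0$ in $ax_1^2 + bx_2^2 = -1$, which is impossible, so $n \geq 1$.

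There is essentially no obstacle here: everything reduces to expanding $(r \pm s)^2$ and a single substitution. The only point that requires a moment's attention is the implicit assumption $N(u)=1$ (i.e.\ that $u$ lies in $\SL_1(\h(a,b,\Z))$ rather than merely in the order), which is the only reason $q_1(x)$ equals $1$ rather than an arbitrary integer and the only reason $\|\gamma_u\|^2$ is constrained to the arithmetic progression $\{4n-2\}$ that will be used to index the sets $\F_n$ of the DAFC.
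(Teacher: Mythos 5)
Your proposal is correct and follows exactly the route the paper intends: the paper omits the proof of this lemma, remarking only that it is ``similar to Lemma \ref{unitshabi}'', whose proof is precisely the determinant computation giving the first equation and the substitution of $n$ giving the refined system. Your direct expansion of $\det(\gamma_u)=q_1(x)=N(u)=1$ and of $\|\gamma_u\|^2=2q_2(x)$ (valid here since $a,b>0$ makes all entries real), followed by subtracting $q_1(x)=1$ from $n=x_0^2+abx_3^2$, reproduces this argument in full, and your observation that $n\geq 1$ is a correct small addition.
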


This lemma being similar to Lemma \ref{unitshabi}, we omit the proof. Taking $a=2, b=5$ we have that $ n=1 \ \mbox{or}\  n\geq 9$. Clearly  the stabilizer of $i$, $\Gamma_i$, is trivial (note that as we are in dimension $2$ the role of $j$ is played by $i$.) For $n=1$ we find the identity matrix and for $n=9$ we find an element whose bisector $\Sigma$ separates $i$ and $0$ in the ball model, which gives a starting point for the DAFC as described in Case I. The sequence $r_n$ needed for the DAFC is defined here by $r_n=4n-2$ and $N=46$. Hence we obtain a set of generators for a subgroup of finite index containing units of norm smaller than $4\cdot 46-2$. Figure \ref{H25q} shows the resulting fundamental domain.

\begin{figure}[H]
\centering
\includegraphics[scale=0.50]{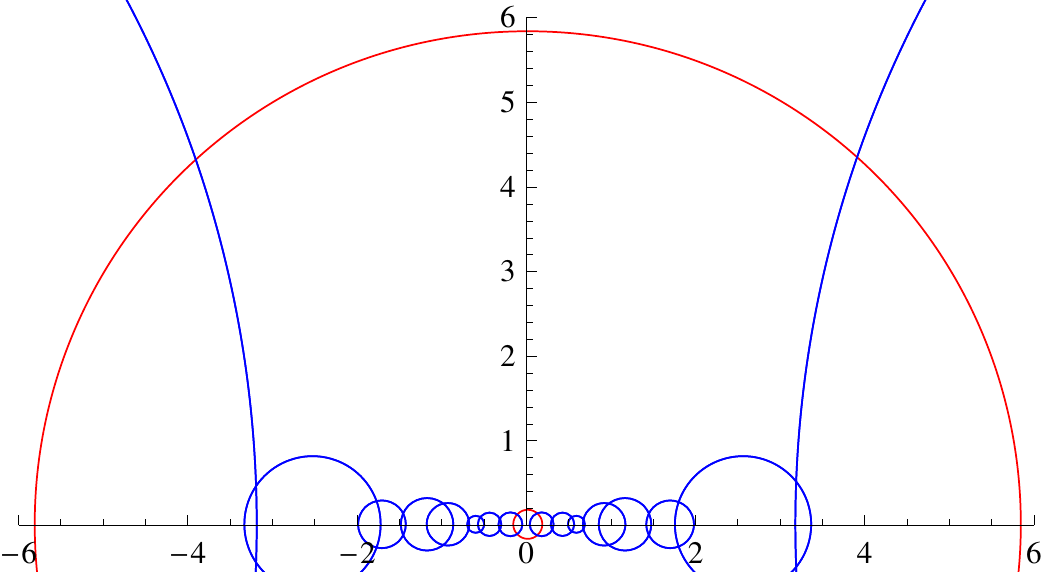}
\caption{Fundamental domain of $S_{2,5}(\Z )$.}
\label{H25q}
\end{figure}

\begin{theorem}
The group  $S_{2,5}(\Z )=\langle -1, g(\gamma) \mid g\in \langle   \sigma^2 , \phi\rangle ,\gamma \in Y\}$, where $Y = \lbrace 3+2i, 2+2i+j+k, 3+3i+k, 6+3j+k \rbrace$, is a subgroup of finite index of $\SL_1( \h(2,5,\mathbb{Z}))$.
\end{theorem}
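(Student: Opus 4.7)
The plan is to apply the DAFC of Proposition \ref{DAFC} to the Fuchsian group $\Gamma=\SL_1(\h(2,5,\Z))$ viewed, via $u\mapsto \gamma_u$, as a discrete subgroup of $\SL_2(\R)$ acting on $\HQ^2$. Because we are in the two-dimensional setting, the role of $j$ is played by $i$, and the lemma immediately preceding the theorem parameterises $\Gamma$ by the system
\begin{equation*}
x_0^2+10x_3^2=n,\qquad 2x_1^2+5x_2^2=n-1,\qquad \|\gamma_u\|^2=4n-2,
\end{equation*}
so the strictly increasing sequence of admissible values of $\|\gamma\|^2$ used in the definition of the sets $\F_n$ can be taken as $r_n=4n-2$, with $n=1$ yielding only $\pm 1$ and the first non-trivial solutions appearing at $n=9$.

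First I would verify that $\Gamma_i=1$, so that we are in Case I of the applications section. For $n=9$ one obtains the unit $3+2i$, whose associated matrix $\gamma$ satisfies $|a|^2+|c|^2<1$ by Lemma \ref{controlfundom}, so its bisector separates $i$ from $0$ in the ball model and provides the ``starting dome'' guaranteeing that the truncated fundamental domain is of finite hyperbolic area. Next I would exploit the symmetries from Proposition \ref{symmetries} (and the remark following it for $\HQ^2$): the centres $P_{\gamma_u}$ and radii $R_{\gamma_u}$ are invariant, in the appropriate sense, under $\sigma^2$ (reflection in the imaginary axis) and under $\phi$ (inversion in $S^1$). Since in this example $\Psi(\Gamma)_0$ is trivial, the whole boundary $\partial\BQ^2$ has to be covered and hence both $\sigma^2$ and $\phi$ can be used to reduce the computation to a single quadrant of the disc.

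Having installed these reductions, I would run the DAFC: step through $n=9,10,11,\dots$, solving the Diophantine system for each $n$, compute for each solution $u$ the set $B(\gamma_u)$ using the explicit formulas of Proposition \ref{isogammaballmodel}, and add it to $\F_n$ only if it is not already covered by the previous bisectors and by their $\langle\sigma^2,\phi\rangle$-translates. Lemma \ref{volume} ensures that no sphere with larger matrix norm can cover a point left uncovered by a sphere with smaller matrix norm, so the intersection remains non-redundant. The process terminates at the first $N$ for which the quotient of $\partial \BQ^2$ under $\langle\sigma^2,\phi\rangle$ is entirely covered; by direct computation one checks that this happens at $N=46$, i.e.\ $\|\gamma\|^2 \le 4\cdot 46-2=182$, and that the only units up to symmetry whose bisectors actually contribute a side are the four listed elements of $Y$, namely $3+2i$, $2+2i+j+k$, $3+3i+k$ and $6+3j+k$.

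Finally, Theorem \ref{Poincare} applied to the exact convex fundamental polygon $\F$ produced by the DAFC yields that the corresponding side-pairing transformations generate a subgroup of finite index, and this subgroup is precisely $S_{2,5}(\Z)=\langle -1,\ g(\gamma)\mid g\in\langle \sigma^2,\phi\rangle,\ \gamma\in Y\rangle$; the factor $-1$ is included because we are lifting from $\PSL_2(\R)$ to $\SL_1(\h(2,5,\Z))$, which differs from its projective image only by the kernel $\{\pm 1\}$ of the action on $\HQ^2$. The main obstacle will be the bookkeeping in the covering step: one must certify that every point of the fundamental quadrant of $\partial\BQ^2$ is genuinely caught by one of the bisectors associated with $Y$ (and no candidate bisector of smaller norm was overlooked in the Diophantine enumeration); this is where the explicit formulas for $P_{\gamma}$ and $R_{\gamma}$ from Proposition \ref{isogammaballmodel}, combined with a computer-algebra verification in the spirit of the preceding examples, do the decisive work.
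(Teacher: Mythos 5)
Your proposal is correct and follows essentially the same route as the paper: set up the Diophantine parameterisation with $r_n=4n-2$, note that $\Gamma_i$ is trivial (Case I), use the $n=9$ solution as the starting bisector, run the DAFC up to $N=46$ exploiting the $\langle\sigma^2,\phi\rangle$ symmetries, and invoke the Poincar\'e theorem plus the kernel $\{\pm1\}$ of the action. The only nitpick is that for $u=3+2i$ itself one has $|a|^2+|c|^2=(3+2\sqrt2)^2>1$; it is the inverse $3-2i$ (equally a norm-$34$ solution, and in the $\langle\sigma^2,\phi\rangle$-orbit relevant to $Y$) whose bisector separates $i$ from $0$.
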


%*********************************

\subsection{Matrix Algebras}

In this section we consider groups $\Gamma$ of finite covolume and with  at least one ideal vertex (so $\Gamma$ is not cocompact). First we remind that a horosphere $\Sigma$ of $\BQ^3$, based at a point $P$ of $S^2$, is the intersection with $\BQ^3$ of a Euclidean sphere in $\overline{\BQ^3}$ tangent to $S^2$ in $P$. The interior of a horosphere is called a horoball. Recall then that an ideal vertex of a convex polyhedron $\F$ in $\BQ^3$ is a point $P$ of $\F \cap \partial \BQ^3$ for which there is a closed horoball $C$ of $\overline{\BQ^3}$ based at $P$ such that $C$ meets just the sides of $\F$ incident to $P$ and such that $\partial C \cap \F$ is compact. More details on this may be found in \cite[Chapter 6.4]{ratcliffe}. To describe how to implement the DAFC and obtain a fundamental domain we work in $\BQ^3$ and suppose that $j$ is an ideal vertex. Recall that $S^2=\partial \BQ^3$.
\vp

\noindent {\bf{Case I: $\Psi(\Gamma)_0$ is trivial.}} As in subsection \ref{divalg}, in this case again, we may just run the DAFC as stated in Proposition \ref{DAFC} (respectively \ref{completeset}). However again because of visualisation in the upper half space model, we are first looking for some "starting point". So find a $\gamma = \left(
\begin{array}{ll}
a & b \\ c & d
\end{array}
\right)\in \Gamma$, with $|a|^2+|c|^2=1$. By Lemma \ref{controlfundom}, these are exactly the units associated to bisectors that
contain the point $j$ (note that since $\Psi(\Gamma)_{0}$ is trivial we have that $\gamma \not\in \SU_{2}(\C)$). Since $\Gamma$ has finite covolume, a finite number of them, those with smallest matrix norm, $\gamma_1,\cdots ,
\gamma_m$ say, can be chosen such that  there exists a neighbour $V_j$ of $j$ in $S^2$ such that $(V_j \setminus j) \subseteq  \bigcup\limits_{1\leq k\leq m}\mbox{Interior}(\Sigma_{\Psi (\gamma_k)})$. To know the value of $m$ one just has to know the link of $j$. Recall that
the link of the ideal vertex $j$ is defined to be the set $\Sigma_j \cap \F$, where $\Sigma_j$ is a horosphere based at $j$ that meets just the sides of $\F$ incident with $j$. Set
$$V=(S^2 \setminus \lbrace j \rbrace) \cap \bigcap\limits_{1\leq k\leq m} \mbox{Exterior}(\Sigma_{\Psi (\gamma_k)}).$$
Since $\Gamma$ is of finite covolume, we may use the DAFC to find $N \in \N$ such that $V$ is contained in $\F_N$.  The   output is $\langle \gamma_1,\cdots \gamma_n\rangle$, a subgroup of finite index in $\Gamma$, where $\{\gamma_{m+1}.\cdots \gamma_n\}$ is the set of all $\gamma_i$ such that $B(\gamma_i) \in \F_N$.
\vp

\noindent {\bf{Case II: $\Psi(\Gamma)_0$  non-trivial.}} In this case, proceed as in Case I with $$V=(\partial\mathcal{F}_0)\cap (S^2 \setminus \{j\})\bigcap\limits_{1\leq k\leq m}\mbox{Exterior}(\Sigma_{\Psi (\gamma_k)}),$$
where $\mathcal{F}_0$ is a fundamental domain of $\Psi(\Gamma)_0$. Then $\langle G_0,\gamma_1,\cdots \gamma_n\rangle$ is of finite index in $\Gamma$, where $G_0$ is a generating set of $\Psi(\Gamma)_0$.
\vp

The examples we give here are the Bianchi Groups, i.e. the groups $\PSL_2(\O_d)$ with $\O_d$ the ring of integers in
$\Q(\sqrt{-d})$ and $d>0$, (see \cite{elstrodt, floge}), for which a Ford Fundamental region is given  in \cite[Theorem
VII.3.4]{elstrodt}. Recall that a Ford fundamental region is defined in the following way. Let $\Gamma$ be a
group acting discontinuously on $\HQ^3$, such that no non-trivial element of $\Gamma$ fixes the point $\infty \in \partial \HQ^3$.
Recall that $\Iso_{\gamma}$ denotes the isometric sphere associated to any non-trivial $\gamma \in \Gamma$ and  denote the exterior of $\Iso_{\gamma}$ by $H_{\gamma}$. Then it may be shown that $\bigcap_{\gamma \in \Gamma, \gamma \neq 1}
H_{\gamma}$ is a fundamental region (i.e. a fundamental domain which is not necessary connected). This is called the
Ford fundamental region. We are not going into further details on this topic, but the interested reader we refer to
\cite[Chapter 9.5]{beardon}. Here we  describe a Dirichlet fundamental polyhedron for all $d$. Note that the Bianchi
groups can also be handled as groups commensurable with  the unit group of an order in the split quaternion
algebra  $\h(K)$,  $K
=\Q (\sqrt{-d})$  and $d\equiv 1,2 \mbox{ mod }4$ or $d\equiv 3 \textrm{ mod } 4$ and $\h(K)$ not a division ring. All this can be handled as in the previous section (the division assumption in the previous section was only used to guarantee that the groups were cocompact and hence of finite covolume, see \cite[Theorem X.1.2]{elstrodt}).

Let $\gamma =\begin{pmatrix}
a && b\\
c && d
\end{pmatrix} \in \Gamma =\PSL_2(\O_d)$. Let $\omega=\sqrt{-d}$ if $d \equiv 1,2 \  mod\  4$ and $\omega =\frac{1+\sqrt{-d}}{2}$ if $d \equiv 3\  mod\  4$. $\O_d$ is defined as $\Z[\omega]$. Note that $\Gamma$ is not cocompact. This follows from \cite[Theorem VII.1.1]{elstrodt} %or from the fact that $|a|^2,|b|^2, |c|^2,|d|^2\in \Z$ and Corollary 3.7 in \cite{jeskiefjuretall}. 
We have that $\Psi^{-1}(\Psi(\Gamma)_0) = \Gamma \cap \SU_2(\C)$. Recall from section \ref{seciso} that $\gamma \in \SU_2(\C)$ if and only if $\Vert \gamma \Vert^2=2$. Thus in this case $\vert a(\gamma) \vert^2+ \vert b(\gamma) \vert^2 + \vert c(\gamma) \vert^2 + \vert d(\gamma) \vert^2=2$ and $a(\gamma)d(\gamma)-b(\gamma)c(\gamma)=1$. If $d=2$ or $d>3$, then for every element $a \in \O_d$, $\vert a \vert^2 >1$ except if $a= \pm 1$ or $a=0$ and thus the only element in $\Gamma \cap \SU_2(\C)$ in those cases is $\gamma_0=\begin{pmatrix} 0 && -1\\ 1 && 0\end{pmatrix}$. So $\Psi(\Gamma)_0 = \langle \Psi(\gamma_0)\rangle$. However if $d=1$, one easily computes that $\Psi(\Gamma)_0 = \langle \Psi (\gamma_1), \Psi(\gamma_2) \rangle \cong C_2 \times C_2$, where $\gamma_1=\begin{pmatrix} i & 0 \\ 0 & -i \end{pmatrix}$ and $\gamma_2=\begin{pmatrix} 0 & i \\ i & 0 \end{pmatrix}$. If $d=3$, $\Psi(\Gamma)_0 = \langle \Psi (\gamma_1), \Psi(\gamma_2) \rangle \cong S_3$, where $\gamma_1=\begin{pmatrix} \omega & 0 \\ 0 & \overline{\omega} \end{pmatrix}$ and $\gamma_2=\begin{pmatrix} 0 & -1 \\ 1 & 0 \end{pmatrix}$. Since $\Psi(\gamma_0) \in \Psi(\Gamma)_0,$ for all $d$, we have that a fundamental domain $\mathcal{F}_0$ of $\Gamma$ is a subset of $\{(x,y,z)\in \BQ^3 \mid z \geq 0 \}$ in the ball model and in the upper half unit sphere in the upper half space model. Since $j$ is an ideal vertex of $\Gamma$, we have to find the elements $\gamma$ such that $j\in \Sigma_{\Psi (\gamma )}$, as it is explained in Case I. By Lemma \ref{controlfundom}, this is the case if and only if $\vert a \vert^2 + \vert c \vert^2=1$. As $\vert a \vert \geq 1$ for every $0 \neq a \in \O_d$, one of $a$ or $c$ has to be $0$. We may suppose that $c (\gamma )=0$. Indeed if $c (\gamma )\neq 0$ then $a(\gamma_0\gamma )\neq 0$, and hence $c(\gamma_0\gamma)=0$. As $\Sigma_{\Psi (\gamma_0\gamma )}=\Sigma_{\Psi (\gamma )}$, we can hence suppose $c(\gamma)$ to be $0$. If $c(\gamma)=0$, then $\gamma$ fixes the point $\infty \in \partial \HQ^3$ and hence $\Psi(\gamma)\in \Psi(\Gamma)_{j}$ in $\BQ^3$. Denoting  by $\mathcal{F}_j$ a fundamental domain of $\Psi(\Gamma)_j$ acting on $\BQ^3$, we have that $\mathcal{F} \subseteq \mathcal{F}_0\cap \mathcal{F}_j\cap \{(x,y,z)\in \BQ\ | z\geq 0\}$. So, referring to Case II, we may take $\{\Psi(\gamma_1),\cdots , \Psi(\gamma_m)\}\subseteq \Psi(\Gamma)_j$ and $V= (\partial \mathcal{F}_j)\cap (\partial \mathcal{F}_0)\cap (S^2 \setminus \lbrace j \rbrace)$.

Using $\eta_0: \HQ^3 \rightarrow \BQ^3$ we transfer this information to $\HQ^3$. In this model $\Gamma_{\infty}=\{ \begin{pmatrix}
a && b\\
0 && d
\end{pmatrix}  \ | \ ad=1, a,b,d\in \Z [\omega] \}$.
Let $\hat{\mathcal{F}}_{\infty}$ be a fundamental domain of  $\Gamma_{\infty}$ acting on $\C$. Then,  for all $d$, $\mathcal{F}_{\infty}=\{ z+rj\in \HQ^3\ |\ z\in \hat{\mathcal{F}}_{\infty}\}$ is a fundamental domain of $\Gamma_{\infty}$  in $\HQ^3$.
The following lemma gives more details on $\F_{\infty}$. As it is proved by easy computations we omit the proof.

\begin{lemma}\label{startingpoint}
\begin{enumerate}
\item If $1<d \equiv 1,2 \  mod\  4$ then  $\mathcal{F}_{\infty} = \lbrace z + rj \in \HQ^3 \mid -\frac{1}{2} \leq Re(z) \leq \frac{1}{2},\  -\frac{\sqrt{d}}{2} \leq Im(z) \leq \frac{\sqrt{d}}{2} \rbrace$
\item If $3<d \equiv 3\  mod\  4$ then  $\mathcal{F}_{\infty} = \lbrace z + rj \in \HQ^3 \mid -\frac{1}{2} \leq Re(z) \leq \frac{1}{2},\ -\frac{1+d}{4} \leq Re(z)+\sqrt{d}Im(z) \leq \frac{1+d}{4},\ -\frac{1+d}{4} \leq Re(z)-\sqrt{d}Im(z) \leq \frac{1+d}{4} \rbrace$
\item If $d=3$, then $\mathcal{F}_{\infty} = \lbrace z + rj \in \HQ^3 \mid 0 \leq Re(z) \leq \frac{1}{2},\ 0 \leq Re(z)+\sqrt{3}Im(z) \leq 1 \rbrace$.
\item If $d=1$ then $\mathcal{F}_{\infty} = \lbrace z + rj \in \HQ^3 \mid -\frac{1}{2} \leq Re(z) \leq \frac{1}{2},\  0 \leq Im(z) \leq \frac{1}{2} \rbrace$.
\end{enumerate}
\end{lemma}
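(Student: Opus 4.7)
The plan is to first reduce the statement to a purely planar question. For $\gamma = \begin{pmatrix} a & b \\ 0 & d \end{pmatrix} \in \Gamma_\infty$ the condition $ad = 1$ with $a, d \in \O_d$ forces $a$ and $d$ to be units in $\O_d$, so in particular $|a| = |d| = 1$, and the explicit action formula from Section~\ref{secback} collapses to
\begin{equation*}
\gamma(z+rj) \;=\; \frac{a}{d}\, z + \frac{b}{d} + \frac{r}{|d|^{2}}\, j \;=\; a^{2} z + ab + rj .
\end{equation*}
Hence $\Gamma_\infty$ preserves every horizontal slice $\{z + rj : z \in \C\}$, and it suffices to show that the planar set $\hat{\mathcal{F}}_\infty$ is a fundamental domain for the induced action of $\Gamma_\infty/\{\pm I\}$ on $\C$.

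Next I would list the units of $\O_d$: they are $\{\pm 1\}$ for $d = 2$ or $d > 3$, the group $\{\pm 1, \pm i\}$ for $d = 1$, and the sixth roots of unity for $d = 3$. Squaring these and passing to $\PSL_2$ shows that the point-stabilizer of the origin inside $\Gamma_\infty/\{\pm I\}$ is trivial in cases (i) and (ii), cyclic of order two (acting as $z \mapsto -z$) in case (iv), and cyclic of order three (acting as multiplication by $\omega^{2}$) in case (iii). Thus in cases (i)--(ii) one must recover the Voronoi cell of the lattice $\O_d$ around $0$, and in cases (iii)--(iv) one must further reduce this cell by the appropriate rotation subgroup.

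For case (i) I would intersect the half-planes $\{z : 2\,\text{Re}(z\bar v) \leq |v|^{2}\}$ over the nearest neighbours $v = \pm 1, \pm\sqrt{-d}$ and read off the rectangle in the statement. For case (ii) the relevant neighbours are $\pm 1,\pm \omega,\pm(\omega-1)$, all with $|\omega|^{2} = |\omega-1|^{2} = (1+d)/4$; expanding $2\,\text{Re}(z\bar\omega) = \text{Re}(z) + \sqrt{d}\,\text{Im}(z)$ and $2\,\text{Re}(z\overline{(\omega-1)}) = -\text{Re}(z) + \sqrt{d}\,\text{Im}(z)$ gives exactly the three pairs of inequalities displayed. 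Case (iv) is then immediate: restricting the square Voronoi cell $[-\tfrac{1}{2}, \tfrac{1}{2}]^{2}$ of $\Z[i]$ to the upper half-plane $\text{Im}(z) \geq 0$ is a fundamental domain for the extra involution $z \mapsto -z$.

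The main obstacle is case (iii), since the proposed fundamental domain is a parallelogram rather than a symmetric $120^{\circ}$ sector of the regular hexagon. Here I would verify the claim by producing explicit side-pairings inside $\Gamma_\infty/\{\pm I\}$. Labelling the vertices $A = 0$, $B = \tfrac{1}{2} - \tfrac{i}{2\sqrt{3}}$, $C = \tfrac{1}{2} + \tfrac{i}{2\sqrt{3}}$, $D = \tfrac{i}{\sqrt{3}}$, a direct calculation shows that the rotation $z \mapsto \omega^{2} z$ fixes $A$ and sends $B$ to $D$, pairing the sides $AB$ and $AD$, while the element $z \mapsto \omega^{2} z + 1$ (an order-three rotation about $C$) fixes $C$ and sends $D$ to $B$, pairing the sides $CD$ and $CB$. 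A determinant computation gives Euclidean area $\sqrt{3}/6$, exactly one third of the covolume $\sqrt{3}/2$ of $\Z[\omega]$, which matches the order of the rotation subgroup. Poincar\'e's Theorem (Theorem~\ref{Poincare}) applied to this planar polygon then certifies that the parallelogram is a fundamental domain for $\Gamma_\infty/\{\pm I\}$, completing the proof.
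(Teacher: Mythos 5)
Your argument is correct, and it is worth noting up front that the paper itself gives no proof of Lemma \ref{startingpoint} (it is dismissed with ``as it is proved by easy computations we omit the proof''), so your write-up actually supplies the missing details; moreover your output is consistent with the surrounding text, e.g.\ the hexagon vertices implicit in your case (ii) agree with the ones listed in the paragraph after the lemma. The reduction $\gamma(z+rj)=a^2z+ab+rj$ is exactly the right normalization: it identifies $\Gamma_\infty$ with the plane crystallographic group generated by translations by $\mathcal{O}_d$ and by multiplication by squares of units, so that cases (i) and (ii) amount to the Voronoi cells of $\Z+\Z\sqrt{-d}$ and $\Z+\Z\omega$, while cases (iii) and (iv) are those cells cut down by the point groups $C_3$ and $C_2$. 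Two small repairs are advisable. First, your appeal to Theorem \ref{Poincare} in case (iii) cites the wrong direction of Poincar\'e's theorem: that statement extracts side-pairing generators from a polyhedron already known to be an exact fundamental domain, whereas you need the converse. Either invoke the full Poincar\'e polygon theorem and check the vertex cycles --- $\{A\}$ and $\{C\}$ each with angle $2\pi/3$ and pairing of order $3$, and $\{B,D\}$ with total angle $2\pi/3$ and cycle transformation of order $3$ --- or, more elementarily, observe that since $C=B+D$ the parallelogram is $\{sB+tD: 0\le s,t\le 1\}$, its rotates $P$, $\omega^2P$, $\omega^4P$ occupy the three disjoint sectors of aperture $2\pi/3$ at the origin and have total area $\sqrt{3}/2$, so their union is exactly the Voronoi hexagon of $\Z[\omega]$; the tiling of $\C$ by translates of that hexagon then shows directly that $P$ is a fundamental domain for the whole group $\{z\mapsto\omega^{2k}z+\lambda\}$. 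Second, in case (iv) you should say explicitly that the square Voronoi cell is invariant under $z\mapsto -z$, which is what legitimizes intersecting it with $\{\mathrm{Im}(z)\ge 0\}$. Neither point affects the validity of the proof.
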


For $d \equiv 1,2 \  \mbox{mod}\  4$,  $\hat{\mathcal{F}}_{\infty}$ is a rectangle with vertices $\pm \frac{1}{2}\pm \frac{\sqrt{d}}{2}i$ and for  $d \equiv 3\  mod\  4$ it is a hexagon with vertices $\pm\frac{ (d+1)\sqrt{d}}{4d}i$ and $\pm \frac{1}{2}\pm \frac{(d-1)\sqrt{d}}{4d}i$. For $d\neq 3$, all vertices of this hexagon lie on the circle centered at  $0$ with radius $\frac{(d+1)\sqrt{d}}{4d}$. Hence, for $d\in \{1,2,3,7,11\}$, $\F_{\infty} \cap \partial \HQ^3$ is included in the interior of $S^2 \cap \partial \HQ^3$ and hence $\infty$ is the only ideal vertex of $\Gamma$ in $\widehat{\HQ}^3$, respectively $j$ is the only ideal vertex in $\widehat{\BQ}^3$.

We implemented the DAFC for some Bianchi groups for $d\equiv 3 \mod 4$. Note that the implementation for Bianchi groups for $d \equiv 1,2 \mod 4$ is done in the same way, the only difference lies in the definition of $\omega$ and $\F_{\infty}$.
So let $\Gamma=\PSL_2(\Z[\omega])$ where $\omega=\frac{1+\sqrt{-d}}{2}$ for $d\equiv 3 \mod 4$. The following lemma describes the group $\Gamma$ in terms of Diophantine equations.
\begin{lemma}
Let $\gamma=\begin{pmatrix} a && b \\ c && d \end{pmatrix} \in \Gamma$, $a=x_0+y_0\omega$, $b=x_1+y_1\omega$, $c=x_2+y_2\omega$, $d=x_3+y_3\omega$, $x=(x_0,x_1,x_2,x_3)\in \Z^{4}$, $y=(y_0,y_1,y_2,y_3)\in \Z^{4}$,  $\det (x) = x_0x_3-x_1x_2$, $J(x)=(x_3,-x_2,-x_1,x_0)$ and $\det(y)$ and $J(y)$ are analogously defined as $det(x)$ and $J(x)$.
Then
\begin{eqnarray}
\begin{cases}
\det (x)-(\frac{d+1}{4})\det (y)=1\\
\det (y)+\langle x|J(y)\rangle =0\\
\|\gamma\|^2=\|x\|^2+(\frac{d+1}{4})\|y\|^2+\langle x|y\rangle\in \N.
\end{cases}
\end{eqnarray}
\end{lemma}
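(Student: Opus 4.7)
The plan is to perform a direct computation using the minimal polynomial of $\omega$, separating rational parts from $\omega$-parts.

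First I would record the basic arithmetic of $\omega=\frac{1+\sqrt{-d}}{2}$ for $d\equiv 3\pmod 4$: since $\omega+\bar\omega=1$ and $\omega\bar\omega=\frac{d+1}{4}$, the element $\omega$ satisfies $\omega^{2}=\omega-\frac{d+1}{4}$, where crucially $\frac{d+1}{4}\in\Z$ under our congruence hypothesis. I would then expand the product $ad$ using the expressions $a=x_0+y_0\omega$ and $d=x_3+y_3\omega$, substituting $\omega^{2}=\omega-\frac{d+1}{4}$ to obtain
\[
ad=\Bigl(x_0x_3-\tfrac{d+1}{4}y_0y_3\Bigr)+\bigl(x_0y_3+y_0x_3+y_0y_3\bigr)\omega,
\]
and compute $bc$ by the same recipe.

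Next I would subtract to get
\[
ad-bc=\Bigl(\det(x)-\tfrac{d+1}{4}\det(y)\Bigr)+\Bigl(x_0y_3-x_1y_2-x_2y_1+x_3y_0+\det(y)\Bigr)\omega.
\]
Because $\gamma\in\PSL_{2}(\Z[\omega])$ satisfies $ad-bc=1$, and because $\{1,\omega\}$ is a $\Q$-basis of $\Q(\sqrt{-d})$, I can equate the rational and $\omega$ coefficients with $1$ and $0$ respectively. The rational equation yields the first equation of the system. For the $\omega$-coefficient I would identify $x_0y_3-x_1y_2-x_2y_1+x_3y_0$ as $\langle x\,|\,J(y)\rangle$ by reading off the definition of $J(y)=(y_3,-y_2,-y_1,y_0)$, producing the second equation.

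For the norm identity I would compute $|a|^{2}=a\bar a=(x_0+y_0\omega)(x_0+y_0\bar\omega)$ and use $\omega+\bar\omega=1$, $\omega\bar\omega=\frac{d+1}{4}$ to obtain $|a|^{2}=x_0^{2}+x_0y_0+\frac{d+1}{4}y_0^{2}$; the analogous formulas hold for $|b|^{2},|c|^{2},|d|^{2}$. Summing the four contributions gives
\[
\|\gamma\|^{2}=\|x\|^{2}+\langle x\,|\,y\rangle+\tfrac{d+1}{4}\|y\|^{2},
\]
which lies in $\N$ because each summand is an integer (using $\frac{d+1}{4}\in\Z$) and the sum is positive for $\gamma\neq 0$. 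There is no real obstacle here — the statement is essentially bookkeeping; the only care needed is the consistent use of $\omega^{2}=\omega-\frac{d+1}{4}$ and recognizing that the $\omega$-component of $ad-bc$ packages itself exactly into the inner product with $J(y)$.
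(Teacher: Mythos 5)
Your proof is correct and follows essentially the same route as the paper: a direct expansion of $ad-bc=1$ and of $\|\gamma\|^2$ followed by comparison of coefficients over a basis of $\Q(\sqrt{-d})$. The only (cosmetic) difference is that you expand with respect to $\{1,\omega\}$ via $\omega^2=\omega-\frac{d+1}{4}$, which yields the two determinant equations immediately, whereas the paper expands with respect to $\{1,\sqrt{-d}\}$ and then eliminates $\langle x\,|\,J(y)\rangle$ by substitution.
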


\begin{proof}
If we compute the determinant of $\gamma$, we get that
$$det(x)-\frac{d-1}{4}det(y)+\frac{1}{2}\langle x|J(y)\rangle + (det(y)+\langle x|J(y)\rangle)\frac{\sqrt{-d}}{2}=1.$$
Hence we have that
$$det(x)-\frac{d-1}{4}det(y)+\frac{1}{2}\langle x|J(y)\rangle=1$$
and
$$det(y)+\langle x|J(y)\rangle=0.$$
Replacing $\langle x|J(y)\rangle$ by $-det(y)$ in the first equation, we get the two first equations of the lemma. The third equation comes from mere computations of $\Vert \gamma \Vert^2$.
\end{proof}

If $\|\gamma\|^2=n\in \N$ then $\mbox{max}\{|a|,|b|,|c|,|d|\}\leq \sqrt{n}$  and hence the equation $\|\gamma\|^2=n$ has a finite number of solutions $(x,y)$. Consequently, for each $n\in \N$ the system above has a finite number of solutions $(x,y)$ such that $\|\gamma\|^2=n$. To implement the DAFC we choose the sequence $r_n=n$, for  $n \geq 1$, because $\Vert \gamma \Vert^2 \in \N$. The next theorem gives the outcome of the DAFC for three  examples.

\begin{theorem}

For $d\in \{19,23,27\}$, the subgroup  $\Gamma_{(d)}=\langle \Psi^{-1}(\Psi(\Gamma)_0), g(Y_d)\ |\ g \in \langle \sigma^2 , \tau
\rangle \rangle$, where $Y_d$ is given below, is of finite index in $\PSL_2(\mathbb{Z}[\omega])$, where  $\omega=\frac{1+\sqrt{-d}}{2}$.

$ Y_{19}= \left\lbrace
\begin{pmatrix}
1 & 1\\
0 & 1
\end{pmatrix},
\begin{pmatrix}
1 & -\omega\\
0 & 1
\end{pmatrix},
\begin{pmatrix}
1-\omega & 2\\
2 & \omega
\end{pmatrix}\right\rbrace,
$

\vp

$ Y_{23}= \left\lbrace
\begin{pmatrix}
1 & 1\\
0 & 1
\end{pmatrix},
\begin{pmatrix}
1 & -\omega\\
0 & 1
\end{pmatrix},
\begin{pmatrix}
-2+\omega & 3\\
-1-\omega & -3
\end{pmatrix},
\begin{pmatrix}
-3+\omega & 2+\omega\\
-2-\omega & -3+\omega
\end{pmatrix} \right\rbrace,
$

\vp

$ Y_{27}= \left\lbrace
\begin{pmatrix}
1 & 1\\
0 & 1
\end{pmatrix},
\begin{pmatrix}
1 & -\omega\\
0 & 1
\end{pmatrix},
\begin{pmatrix}
2 & -\omega\\
1-\omega & -3
\end{pmatrix}\right\rbrace,
$
\vp

\end{theorem}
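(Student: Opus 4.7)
The plan is to prove the theorem by applying the DAFC (Proposition \ref{DAFC}) in the Case II form laid out at the beginning of Section 4.2 to each of the groups $\PSL_2(\Z[\omega])$ with $d\in\{19,23,27\}$ separately. All calculations take place in the ball model $\BQ^3$, and the claimed generating sets will arise as the side-pairing transformations of a fundamental polyhedron produced by the algorithm (pulled back to $\HQ^3$ via $\eta_0^{-1}$ if visualisation is desired).

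First I would record the data needed to initialize Case II. Since $d>3$, the analysis preceding Lemma \ref{startingpoint} identifies $\Psi^{-1}(\Psi(\Gamma)_0)$ with $\langle\gamma_0\rangle$, where $\gamma_0=\begin{pmatrix}0&-1\\1&0\end{pmatrix}$, and a fundamental domain $\mathcal{F}_0$ for $\Psi(\Gamma)_0$ may be taken to be the upper half-ball $\{(x,y,z)\in\BQ^3:z\geq 0\}$. The point $j$ is the unique ideal vertex of $\Gamma$. By Lemma \ref{controlfundom} a bisector passes through $j$ exactly when $|a|^2+|c|^2=1$; since $|a|\geq 1$ for every nonzero $a\in\Z[\omega]$ when $d\geq 19$, one may assume $c=0$ (replacing $\gamma$ by $\gamma_0\gamma$ if necessary), so the relevant stabilizing bisectors come from $\Gamma_\infty$. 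The two translations $\begin{pmatrix}1&1\\0&1\end{pmatrix}$ and $\begin{pmatrix}1&-\omega\\0&1\end{pmatrix}$ displayed as the first two entries of each $Y_d$ generate a subgroup of $\Gamma_\infty$ whose fundamental domain in $\HQ^3$ is the hexagonal $\mathcal{F}_\infty$ described in Lemma \ref{startingpoint}(b), which together with $\mathcal{F}_0$ pins down the region to be shaved by the DAFC.

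Next I would execute the DAFC with $V=(\partial\mathcal{F}_\infty)\cap(\partial\mathcal{F}_0)\cap(S^2\setminus\{j\})$ and sequence $r_n=n$. Using the Diophantine system from the lemma preceding the statement, the fibre $\{\gamma\in\Gamma:\|\gamma\|^2=n\}$ is finite for each $n\in\N$ (because $\max(|a|,|b|,|c|,|d|)\leq\sqrt{n}$) and can be enumerated by a finite search in $\Z^4\times\Z^4$. By Proposition \ref{symmetries}, $\sigma^2$ and $\tau$ both preserve $\PSL_2(\Z[\omega])$ (note $\overline{\omega}=1-\omega\in\Z[\omega]$ for $d\equiv 3\pmod 4$) and act on the tessellation as Euclidean symmetries of $\BQ^3$, so the search may be restricted to a single quadrant of $\partial\BQ^3$ and the remaining bisectors are recovered by applying $\langle\sigma^2,\tau\rangle$. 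Iterating, I would locate the minimal $N$ for which $V\subseteq\F_N$; the $\gamma$ with $B(\gamma)\in\F_N$ that contribute new sides, taken modulo these symmetries, are precisely the remaining entries of each $Y_d$ listed in the statement.

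Termination is guaranteed by the finite covolume of $\PSL_2(\Z[\omega])$: by the Greenberg--Garland--Raghunathan theorem cited in Proposition \ref{DAFC}, the Dirichlet fundamental polyhedron has only finitely many sides, so such an $N$ exists. Poincar\'e's theorem (Theorem \ref{Poincare}) then yields that the corresponding side-pairing transformations generate the subgroup whose fundamental domain one has built, and this subgroup has finite index in $\PSL_2(\Z[\omega])$. The main obstacle is computational rather than conceptual: one must carry out the Diophantine enumeration for each $d$ up to the a priori unknown bound $N$, redundancy-test each candidate $B(\gamma)$ against the growing union $\bigcup_{n'<n}\F_{n'}$ via the formulas of Proposition \ref{isogammaballmodel}, and verify that the specific $Y_d$ printed above are both sufficient and non-redundant. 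A secondary subtlety for $d=27$ is that $\Z[\omega]$ is a non-maximal order (conductor $3$) inside $\O_{\Q(\sqrt{-3})}=\Z\bigl[\tfrac{1+\sqrt{-3}}{2}\bigr]$; discreteness and finite covolume of $\PSL_2(\Z[\omega])$ then follow from its commensurability with the Bianchi group $\PSL_2(\O_3)$, so the framework of the DAFC still applies.
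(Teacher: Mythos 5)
Your proposal takes essentially the same route as the paper, which offers no proof beyond presenting the theorem as the output of the DAFC run in Case II with the stated Diophantine system, the stabilizer $\langle\gamma_0\rangle$, the hexagonal $\F_{\infty}$ of Lemma 4.8, and the symmetries $\sigma^2,\tau$; your reconstruction of that pipeline, including the termination argument via finite covolume and the Poincar\'e theorem, is faithful (and your remark on the non-maximal order for $d=27$ is a welcome addition). The only slip is the claim that $j$ is the unique ideal vertex: the paper's own computation shows this holds only for $d\in\{1,2,3,7,11\}$, so for $d\in\{19,23,27\}$ the covering of $V$ must be understood up to the finitely many remaining cusps (as the paper states explicitly in the $\Gamma_2(8\Z)$ example) --- a point that does not affect the algorithm or the conclusion.
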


The pictures below show a fundamental domain, rotated over ninety degrees, of $\Gamma_d$ and its projection on $\partial \HQ^3$ for $d\in\{19,23,27\}$,  respectively.

\begin{figure}[H]
\centering
\subfigure[$\Gamma_{(19)}$]{
\includegraphics[scale=0.2]{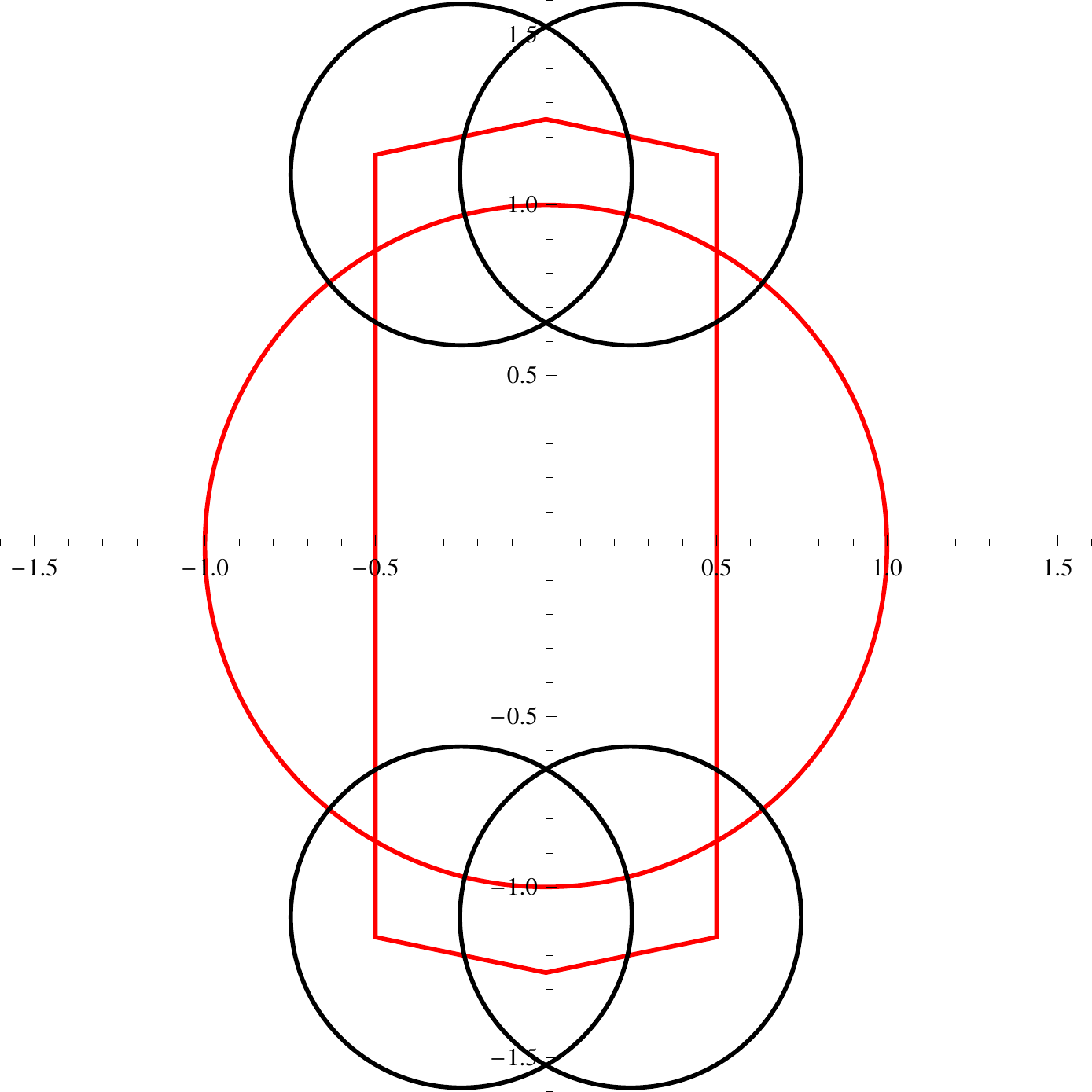}
\label{psl192dim}
}
\subfigure[$\Gamma_{(19)}$]{
\includegraphics[scale=0.3]{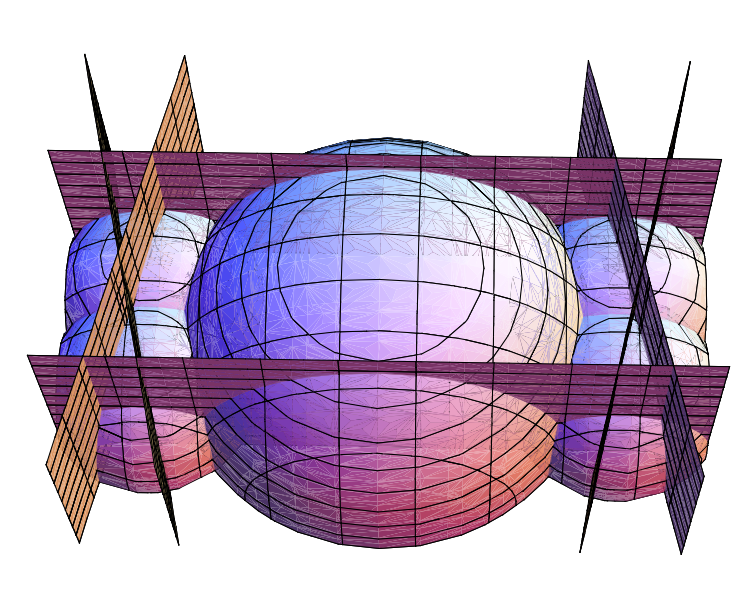}
\label{psl19}
}
\caption{}
\end{figure}

\begin{figure}[H]
\centering
\subfigure[$\Gamma_{(23)}$]{
\includegraphics[scale=0.2]{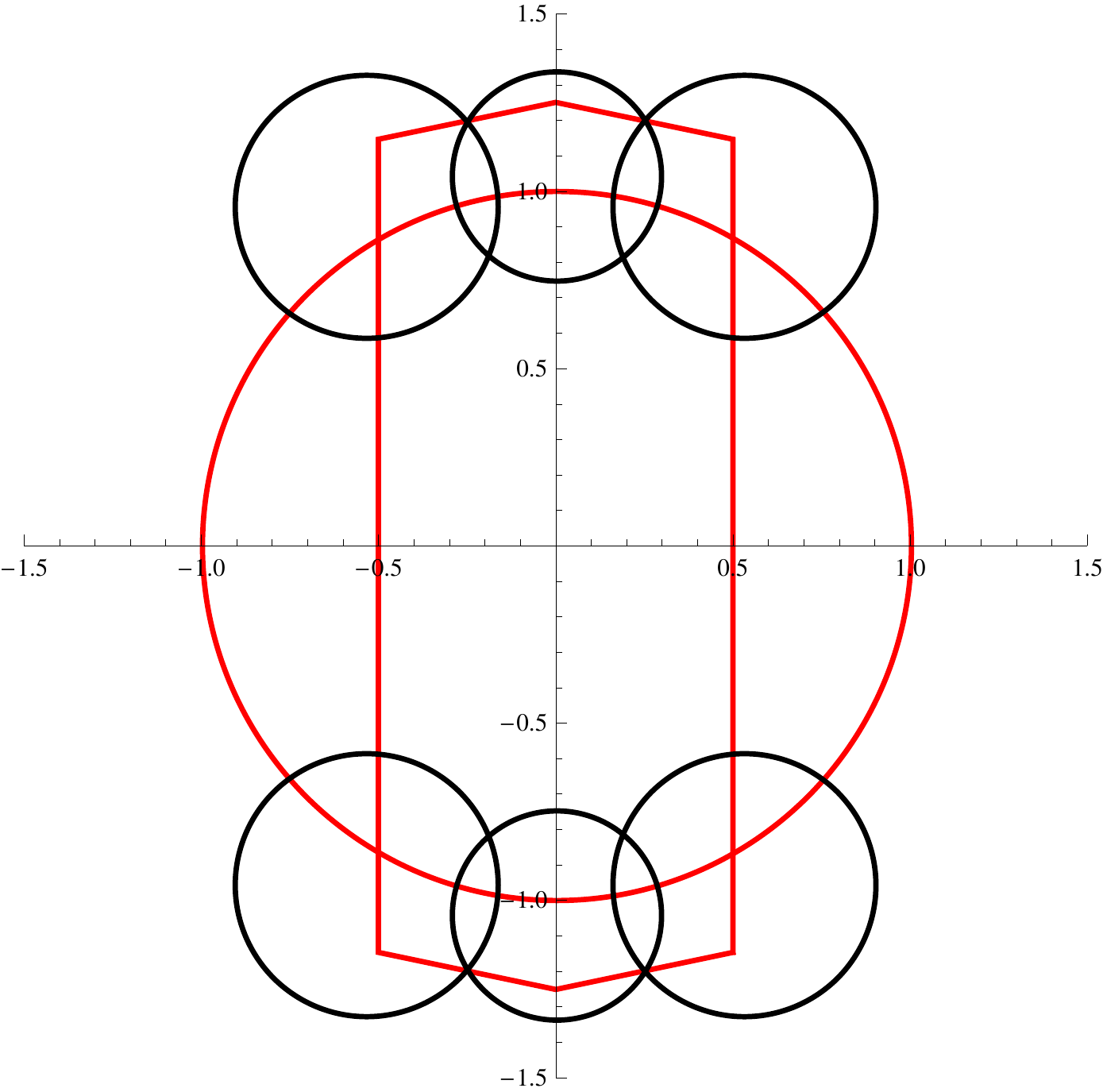}
\label{psl232dim}
}
\subfigure[$\Gamma_{(23)}$]{
\includegraphics[scale=0.3]{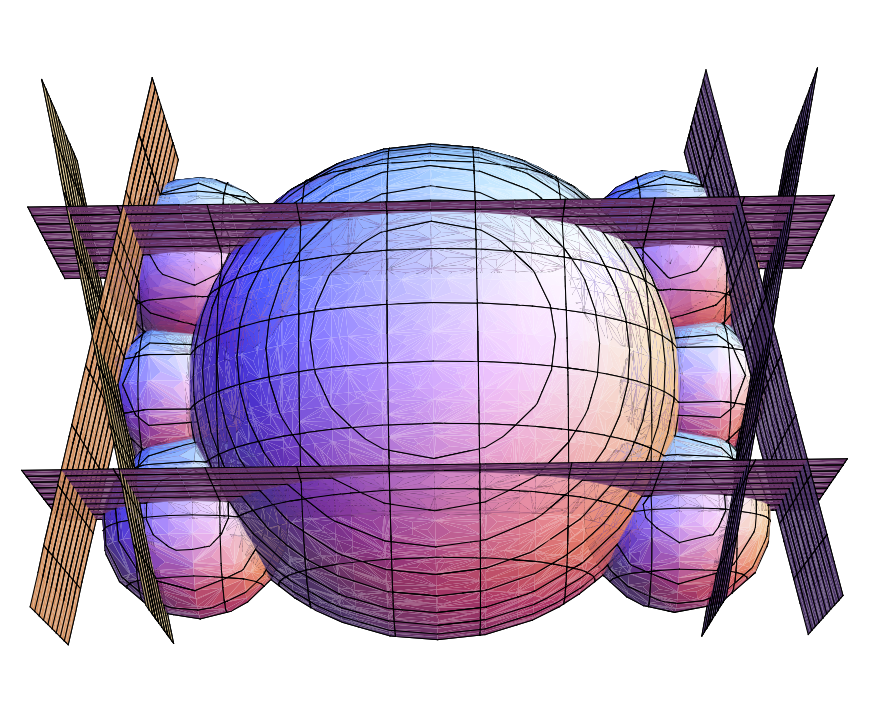}
\label{psl23}
}
\caption{}
\end{figure}

\begin{figure}[H]
\centering
\subfigure[$\Gamma_{(27)}$]{
\includegraphics[scale=0.2]{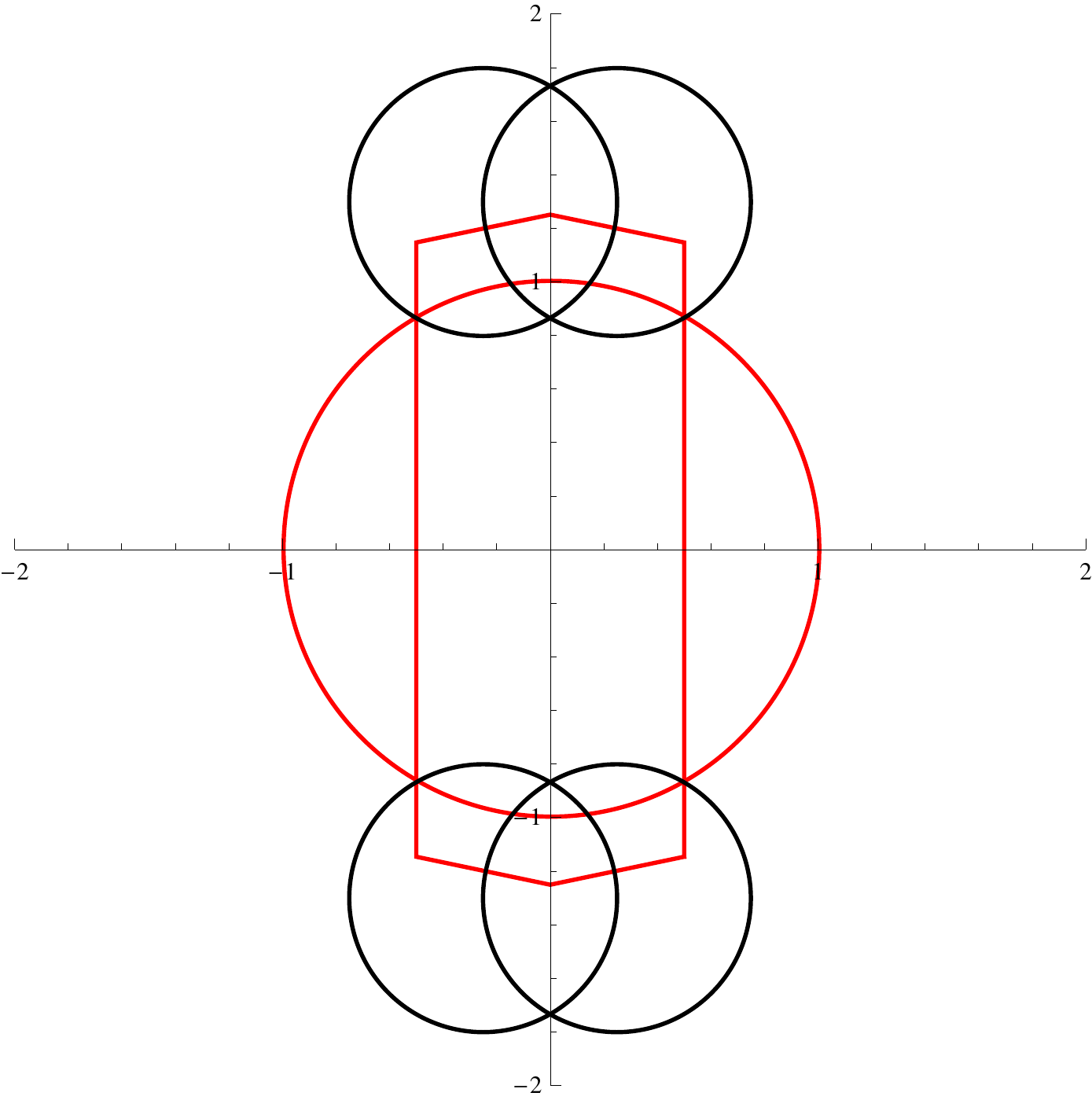}
\label{psl272dim}
}
\subfigure[$\Gamma_{(27)}$]{
\includegraphics[scale=0.3]{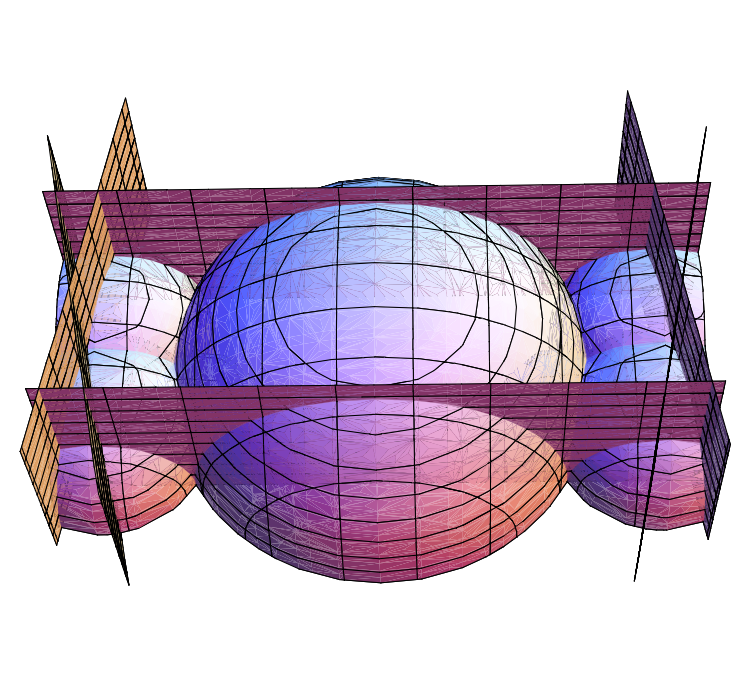}
\label{psl27}
}
\caption{}
\end{figure}

\section{Group Rings}
\label{secgr}

Let $G$ be a finite group. As an application of our algorithm given in Section~\ref{secappl} we are able to find finitely many generators in $\U (\Z G)$ that generate a subgroup of finite index in orders of simple components of $\Q G$ that are  non-commutative division algebras, which are quaternion algebras over $\Q$ or quadratic imaginary extensions of $\Q$, as well as for $2$-by-$2$ matrix algebras over $\Q$ or a quadratic imaginary extension of $\Q$.
This together with the results mentioned in the introduction allow us to describe finitely many generators for a subgroup of finite index in $\U (\Z G)$ for many more groups than previously known. For simplicity we state our result in the case $G$ is nilpotent.

If all Wedderburn components of $\ \Q G$ contain an order whose group of reduced norm one units acts  discretely  on $\HQ^3$, then the finite group $G$ is said to be  of Kleinian type. This subject was treated in \cite{olt-drio} and the classification of these groups was completed in  \cite{jespers-pita-rio-ruiz}. An interesting consequence of this classification  is that $G$ is of Kleininan type if and only if  $\U (\Z G)$ is virtually a direct product of free-by-free groups.

So suppose the finite group $G$ is nilpotent.
For a subset $X$ of $G$ we denote by $\widetilde{X}$ the element $\frac{1}{|X|}\sum_{x\in X}x \in \Q G$. If $X$ is a subgroup then $\widetilde{X}$
is an idempotent of $\Q G$. If, furthermore, $X$ is a normal subgroup then $\widetilde{X}$ is a central idempotent of $\Q G$.
In \cite{jes-olt-rio} the rational representations of a finite nilpotent group $G$
have been explicitly described by exhibiting a set of matrix units of each simple component of
$\Q G$. In particular, a precise  description is given when
a simple component $\Q Ge$ ($e$ a primitive central idempotent) is of exceptional type (this also follows from \cite{jes-leal-deg12} or
\cite{baniq}). \textit{These simple
components  are  one of the following algebras: $\h (\Q( \xi_{2^{m-1}} + \xi_{2^{m-1}}^{-1},\xi_{n}))$ with $1\neq n$ a positive odd integer such that $2$ has odd order modulo $n$ and an integer $m \geq 3$, $M_2(\Q)$,
$M_2(\Q (\sqrt{-2}))$, $M_2(\Q (i))$, $M_2(\Q (\sqrt{-3}))$ or $M_2(\h(\Q))$. Let $G_{e}$ denote the stabilizer of $e$ in $G$. Note that $\Q Ge $ is a simple component of $\Q G \widetilde{G}\cong \Q (G/G_{e})$. For each simple algebra
$\Q Ge$, a  description of $G/G_e$ is  given and, in case $\Q G e$ is a matrix algebra, a complete set of matrix units  is also given}
(see also \cite{jes-leal-deg12}).
The exceptional simple components $\Q Ge$ of the type $\h(\Q (\xi_{2^{m-1}} + \xi_{2^{m-1}}^{-1},\xi_{n}))$ are determined by groups  $G/G_{e}$ of the type
$\Q_{2^{m}}\times C_{n}$ with $1\neq n$ a positive integer such that $2$ has odd order modulo $n$ and an integer $m\geq 3$. The exceptional simple components of the type
$M_{2}(\h (\Q ))$ are determined by some specific $2$-groups of order at most $64$.
The exceptional simple components $\Q Ge$ of the type $M_{2}(F)$ are determined by groups  $G/G_{e}$ of the following type:
$D_{8}$ the dihedral group of order $8$, $D_{16}^{-}=\langle a,b\mid a^{8}=b^{2}=1, ba =a^{3}b\rangle$ the semidihedral group of order $16$, $D_{16}^{+}=\langle a,b \mid a^{8}=b^{2}=1,
ba =a^{5}b\rangle$  the modular group of order 16, ${\cal D} =\langle a,b,c \mid a^{2}=b^{2}=c^{4}=1,ac=ca,bc=cb, ba=c^{2}ab\rangle$, $D_{8}\times C_{3}$, $Q_{8}\times C_{3}$, $D^{+}= \langle a,b,c \mid a^{4}=1, b^{2}=1, c^{4}=1, ca=ac, cb=bc,
            ba=ca^{3}b\rangle $.
In all of these cases, in \cite{jes-olt-rio}, the idempotent $e$ has been explicitly described as well as an  explicit
set of matrix units $E_{11}, E_{22}, E_{12}, E_{21}$ is given. It is this set of units that we will use to describe the following congruence subgroup of level $m$ in $\Q Ge$
(with $m$ a positive integer):
\begin{equation*}
\Gamma_{2,N}(m\O )=\left(1+\sum_{1\leq i,j\leq 2}(m\O )E_{ij} \right) \cap \SL_2(\O) =(1+M_2(m\O ) ) \cap  \SL_2(\O).
\end{equation*}
With the DAFC algorithm one can calculate a finite set of generators for such a group. If one chooses the integer $m$ appropriately then
$1-e+\Gamma_{2,N}(m\O)\subseteq \Z G$. Generators of these groups are  the  units that are used in part 3 of the following result.
The units listed in part 4 are to deal with orders determined by fixed point free groups. Because we exclude simple components that are
division algebras that are not totally definite quaternion algebras the only fixed non-commutative point free epimorphic images of $G$  that can
occur are $Q_{8}\times C_{n}$ with the order of $2$ modulo $n$ even. We then use the matrix idempotents (as part of a set of matrix units) determined
in \cite{jes-olt-rio}. Because of the results mentioned in the introduction, the units described in part  4 of the theorem generate a subgroup
of finite index in the reduced normed one  units of the respective components. The multiple $m$ guarantees that they belong to $\Z G$.
So, all the above together with the results stated in the introduction give us the following result.

\begin{theorem}\label{generators-nilpotent-group} Let $G$ be a nilpotent finite group
of nilpotency class $n$. Assume that the rational
group algebra $\Q G$ does not have simple components of the type
$\h (\Q (\xi_{n},\xi_{2^{m-1}}+\xi_{2^{m-1}}^{-1}))$ with $n$ an odd integer so that the order of $2$ mod $n$ is odd
and an integer $m\geq 3$, or $M_{2}(\h(\Q))$
(equivalently $G$ does not have epimorphic images of the type $Q_{8}\times C_n$ for such $n$ nor some special $2$-groups of order at most $64$.)
Then the group
generated by the following units is of finite
index in $\U (\Z G)$:
\begin{enumerate}[(i)]
\item  $b_{(n)}$, with $b$ a Bass cyclic unit in $\Z G$,
\item the
bicylic units in $\Z G$,
\item generators for the groups $\Gamma_{2,N}(m\O)$  (where for the respective congruence groups we use the matrix units
  described in \cite{jes-olt-rio})  with
$N$ a normal subgroup of $G$ so that
\begin{enumerate}
\item $\O =\Z$, $m=8|N|$ and  $G/N =D_{8}$,
\item $\O =\Z [sqrt{-2}]$, $m=8|N|$ and  $G/N=D_{16}^{-}$,
\item $\O =\Z [i]$, $m=8|N|$ and  $G/N=D_{16}^{+}$,
\item $\O =\Z [i]$, $m=2|N|$ and $G/N={\cal D}$,
\item $\O = \Z [\sqrt{-3}]$, $m=24|N|$ and  $G/N =D_{8}\times C_{3}$,
\item $\O =\Z [\sqrt{-3}]$, $m=24|N|$ and  $G/N =Q_{8}\times C_{3}$,
\item $\O =\Z [i]$, $m=32|N|$ and  $G/N =D^{+}$,
\end{enumerate}
\item  $u_{g,N}=1+mE_{11} g E_{22}$ and $u'_{g,N}=1+mE_{22}gE_{11}$, with $m=|N|2n$, $g\in G$ and $N$ a normal subgroup of $G$ so that $G/N=Q_{8}\times C_{n}=\langle a,b,c \mid
a^{4}=1,a^{2}=b^{2},ba=a^{-1}b,ca=ac,
cb=bc\rangle$ such that $2$ has even  order in $\U (\Z_{n})$, where $E_{11}=e\frac{1}{2}(1+xa+yab)$ and $E_{22}=e\frac{1}{2}(1-xa-yab)$. Moreover $e=\widetilde{N}
\frac{1}{2}(1-a^{2})\left(
1-\frac{1}{n}(\hat{c})\right)$, where $x=\frac{1}{2}(\alpha +b\alpha b^{-1}),\ y=b^{3}\frac{1}{2}(\alpha -b\alpha b^{-1})\in \Z \langle c^{n/p}\rangle $, and $\alpha =\prod_{k=0}^{m-1} \left( 1+bc^{(n/p)2^{k}} \right)$
and $p$ is a prime divisor of $n$  so that $2^{p}\equiv -1 \mod p$.
\end{enumerate}
\end{theorem}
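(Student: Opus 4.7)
The plan is to prove the theorem by following the Wedderburn decomposition strategy outlined in the introduction, reducing the question to each simple component $\Q Ge_i = M_{n_i}(D_i)$ and treating three disjoint regimes according to which components are exceptional and which epimorphic images $Ge_i$ are fixed point free. First I would fix a maximal order $\mathcal{O}$ of $\Q G$ containing $\Z G$ and recall that $\U(\mathcal{O})$ is commensurable with $\prod_i \GL_{n_i}(\mathcal{O}_i)$, so that it suffices to exhibit, inside the group generated by the units in (i)--(iv), a finite index subgroup of each $\GL_{n_i}(\mathcal{O}_i)$, after clearing finitely many denominators via suitable integer multipliers.

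For the central part, I would invoke the result of Jespers, Parmenter and Sehgal cited in the introduction: the units $b_{(n)}$ obtained from Bass cyclic units by the iterated conjugation sum over the upper central series already generate a subgroup of finite index in $\mathcal{Z}(\U(\Z G))$, hence in $\prod_i \U(\mathcal{Z}(\mathcal{O}_i))$. For each simple component $\Q Ge_i = M_{n_i}(D_i)$ with $n_i \geq 2$ whose epimorphic image $Ge_i$ is \emph{not} fixed point free and whose $D_i$ is not an exceptional division algebra, the result of Jespers and Leal guarantees that the bicyclic units in (ii) generate, modulo finite index, $1 - e_i + \SL_{n_i}(\mathcal{O}_i)$; together with the centrally generated part this already handles all components that are not of one of the two problematic types.

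The substantive new input is in (iii). For each simple component $\Q Ge = M_2(F)$ of the explicitly enumerated exceptional types, the normal subgroup $N = G_e$ is given, along with explicit matrix units $E_{11},E_{22},E_{12},E_{21}$ constructed in \cite{jes-olt-rio}. Let $\Gamma = \Gamma_{2,N}(m\mathcal{O})$; the choice of $m$ (as a multiple of $|N|$ and the small denominator needed to clear the entries of the $E_{ij}$ inside $\Z G$) ensures $1 - e + \Gamma \subseteq \U(\Z G)$. Since $F$ is either $\Q$ or a quadratic imaginary extension of $\Q$, the group $\Gamma$ is a discrete subgroup of $\SL_2(\C)$ (respectively $\SL_2(\R)$) of finite covolume. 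The DAFC from Section~3 applies and produces, in finitely many steps, a finite set of side-pairing transformations generating a subgroup of finite index in $\Gamma$, and by Bass--Milnor--Serre / Vaserstein / Venkataramana the congruence subgroup $\Gamma$ itself has finite index in $\SL_2(\mathcal{O})$, so we obtain a finite index subgroup of $1 - e + \SL_2(\mathcal{O})$, which combined with the central Bass contributions yields finite index in $1 - e + \GL_2(\mathcal{O})$.

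Finally, for components determined by a fixed point free image $G/N \cong Q_8 \times C_n$ with the order of $2$ modulo $n$ even (the only fixed point free images surviving our hypothesis that $\Q G$ has no non--totally definite division quaternion exceptional components), I would verify that the units $u_{g,N}, u_{g,N}'$ of part (iv), constructed from the matrix idempotents $E_{11}, E_{22}$ of \cite{jes-olt-rio}, project inside the relevant component to elementary matrices of the form $1 + m\alpha E_{12}$ and $1 + m\alpha E_{21}$ for $\alpha$ ranging over a $\Z$-spanning set of a non-zero ideal of $\mathcal{O}$; Venkataramana's theorem then yields finite index in $\SL_2(\mathcal{O})$. The integer factor $m = 2n|N|$ is calibrated so that these expressions actually lie in $\Z G$; checking this integrality, and verifying that the elementary matrices obtained do generate a non-zero ideal of $\mathcal{O}$, is the principal technical step. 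The main obstacle I anticipate is precisely this bookkeeping: one must exhibit, for each group $G/N$ appearing in (iii) and (iv), that the explicit matrix units of \cite{jes-olt-rio} do deliver congruence data sufficient to apply the congruence subgroup theorem, and that the denominator $m$ chosen is indeed adequate to keep every generator inside $\Z G$ while still hitting a non-zero ideal of $\mathcal{O}$.
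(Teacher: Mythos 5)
Your proposal follows essentially the same route as the paper: the paper's own justification is precisely this synthesis of the Wedderburn decomposition, the Jespers--Parmenter--Sehgal result for the central units $b_{(n)}$, the Jespers--Leal argument for bicyclic units on non-exceptional non-fixed-point-free components, the DAFC applied to the congruence subgroups $\Gamma_{2,N}(m\O)$ for the exceptional $M_2(F)$ components, and the explicit matrix units of \cite{jes-olt-rio} for the $Q_8\times C_n$ images. The only small inaccuracy is that the finite index of $\Gamma_{2,N}(m\O)$ in $\SL_2(\O)$ needs no deep congruence subgroup theorem (it is the kernel of reduction modulo $m$, so the quotient embeds in the finite group $\SL_2(\O/m\O)$); the Vaserstein--Liehl--Venkataramana results are needed only where elementary matrices must generate a finite index subgroup, as in parts (ii) and (iv).
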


In \cite{gia-seh} different units were
used for (iv). In general, the result fails if one
does not include the units listed in (iii) (we refer the reader to \cite[Section 25]{seh2}).

The groups listed in Theorem~\ref{generators-nilpotent-group}(iii) (with for example $|N|=1$) are of finite covolume (coarea)  but not cocompact and a set of generators can be calculated  using the DAFC. However, for most of them it gives a too large set of generators to be listed here. As a matter of example, we therefore restrict ourselves to determine a set of generators for a subgroup of finite index for each of the following groups:     $\Gamma_{2}(8\Z)$,
$\Gamma_{2}(2\Z[\sqrt{-2}])$ and $\Gamma_{2}(2\Z[i])$.

Let $n_0\in \N$. In $\PSL_2(\Z)$ consider the discrete Fuchsian  subgroup
\begin{equation*}
\Gamma_{2}(n_0\Z)= \lbrace \gamma =
\begin{pmatrix}
1+n_0a & n_0b\\
n_0c & 1+n_0d
\end{pmatrix}, a,b,c,d \in \Z \rbrace
\end{equation*}

The following lemma defines the group $\Gamma_{2}(n_0\Z)$ in terms of a system of algebraic equations.
\begin{lemma}\label{eqcong8}
Let $\gamma \in \Gamma_{2}(n_0\Z)$. Write $\gamma =1+n_0\hat{\gamma}$, where $\hat{\gamma}=
\begin{pmatrix}
a & b\\
c & d
\end{pmatrix}$. If we set $\|\hat{\gamma}\|^2-2\det (\hat{\gamma})=n$, then we get the following system of equations.
\begin{eqnarray}
 \begin{cases}
\Vert \hat{\gamma} \Vert ^2-2\det (\hat{\gamma})=n \\
\tr(\hat{\gamma})+n_0\det (\hat{\gamma})=0\\
\Vert \gamma \Vert^2= 2+ n_0^2n, n\in  \N
\end{cases}
\end{eqnarray}
\end{lemma}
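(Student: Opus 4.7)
My plan is to derive the three equations by direct algebraic manipulation, using only the defining properties that $\gamma \in \SL_2(\Z)$ together with the decomposition $\gamma = 1 + n_0 \hat{\gamma}$.

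First, the equation $\|\hat{\gamma}\|^2 - 2\det(\hat{\gamma}) = n$ is simply the defining equation for the integer $n$ introduced in the statement, so there is nothing to prove here; I will just observe that the left-hand side equals $a^2 + b^2 + c^2 + d^2 - 2(ad - bc)$, which is a non-negative integer since it equals $(a-d)^2 + (b+c)^2 \geq 0$. This last identity is the key algebraic observation and also immediately shows that $n \in \N$ (including $0$, which only happens when $\gamma = 1$).

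Next, I would derive the middle equation from the condition $\det \gamma = 1$. A direct expansion gives
\begin{equation*}
\det(1 + n_0 \hat{\gamma}) = 1 + n_0 \tr(\hat{\gamma}) + n_0^2 \det(\hat{\gamma}),
\end{equation*}
and setting this equal to $1$ and dividing through by $n_0$ yields $\tr(\hat{\gamma}) + n_0 \det(\hat{\gamma}) = 0$.

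Finally, for the norm equation, I would compute $\|\gamma\|^2$ directly:
\begin{equation*}
\|\gamma\|^2 = (1+n_0 a)^2 + (n_0 b)^2 + (n_0 c)^2 + (1+n_0 d)^2 = 2 + 2n_0 \tr(\hat{\gamma}) + n_0^2 \|\hat{\gamma}\|^2.
\end{equation*}
Substituting $\tr(\hat{\gamma}) = -n_0 \det(\hat{\gamma})$ from the previous step gives
\begin{equation*}
\|\gamma\|^2 = 2 + n_0^2 \bigl( \|\hat{\gamma}\|^2 - 2\det(\hat{\gamma}) \bigr) = 2 + n_0^2 n,
\end{equation*}
which is the third equation. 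Since all computations are elementary, there is no real obstacle; the only subtle point is justifying $n \in \N$, which is handled by the sum-of-squares identity $\|\hat{\gamma}\|^2 - 2\det(\hat{\gamma}) = (a-d)^2 + (b+c)^2$ mentioned above.
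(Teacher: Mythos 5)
Your proof is correct and follows essentially the same route as the paper: expand the determinant condition to get $\tr(\hat{\gamma})+n_0\det(\hat{\gamma})=0$, compute $\|\gamma\|^2=2+2n_0\tr(\hat{\gamma})+n_0^2\|\hat{\gamma}\|^2$, and substitute. Your added observation that $\|\hat{\gamma}\|^2-2\det(\hat{\gamma})=(a-d)^2+(b+c)^2$ is a nice explicit justification of $n\in\N$ that the paper leaves implicit.
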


\begin{proof}
The norm of $\gamma$ is given by $\Vert \gamma \Vert^2=2+n_0^2(\Vert\hat{\gamma}\Vert^2+\frac{2}{n_0}\tr(\hat{\gamma}))$. One easily computes that $\det (\gamma )=1$  if and only if $\ \tr(\hat{\gamma})+n_0\det (\hat{\gamma})=0$. Hence, it follows that $\|\gamma\|^2=2+n_0^2(\|\hat{\gamma}\|^2-2\det (\hat{\gamma}))$. Set $\|\hat{\gamma}\|^2-2\det (\hat{\gamma})=n$ and the result follows.
\end{proof}

Next, working in $\BQ^2$, we look for elements $\gamma$ whose isometric circle passes through $i$. In this case, these are exactly the elements stabilizing $i$. Thus, for such elements $\gamma$,  we have that $(1+n_0a)^2+n_0^2c^2=1$ and therefore  $\gamma = \begin{pmatrix}
1 & n_0b\\
0 & 1
\end{pmatrix}$. Hence, their  defining bisectors, in $\mathbb{H}^2$, which are vertical lines, are given by the equation $Re (n_0bz)+\frac{n_0^2b^2}{2}=0$ or equivalently $x+\frac{n_0b}{2}=0$. The  bisector corresponding to such an element of smallest norm is the line $x+\frac{n_0}{2}=0$.  We use the DAFC to cover the compact region $[\frac{-n_0}{2}\leq x\leq \frac{n_0}{2}]$, up to a finite number of points, which are precisely the remaining ideal vertices of $\Gamma$. For $n_0=8$, we get the compact region $[-4,4]$ which is covered using solutions of the above system with $n\leq 2304$. This is depicted in Figure \ref{cong8}.
As this example gives a lot of generators, we do not write them as matrices as usual, but under a more compact form (the three columns give the entries $a$, $b$, $c$ and $d$ of the $19$ generating matrices).

\begin{theorem}
$\tilde{\Gamma}_2(8\Z ) =\langle g(\gamma)\mid g\in \langle \sigma^2,\phi \rangle, \gamma \in X \rangle < \Gamma_2(8\mathbb{Z})$, where the transformations $\sigma^2$ and $\phi$ are given in Lemma \ref{symmetries} and where $X$ is given by the table below, is of finite index in $\Gamma_2(8\Z)$.

\begin{tabular}{cccc | cccc |cccc}
a & b & c & d & a & b & c & d & a & b & c & d \\
\hline
0 & 0 & 1 & 0 &  -3 & 1 & -9 & 3&  8 & -6 & -11 & 8\\
-1 & 1 & -1 & 1 & 5 & -2 & -8 & 3&  13 & -8 & 8 & -5\\
2 & -1 & 4 & -2 & -10 & 7 & -3 & 2  &  8 & -3 & -22 & 8\\
-7 & 2 & 3 & -1 & 6 & -4 & 9 & -6  &  16 & -26 & -10 & 16\\
-4 & 3 & 5 & -4 & 11 & -4 & 8 & -3&  21 & -8 & 34 & -13\\
7 & -2 & -4 & 1 & -4 & 1 & 15 & -4 & -16 & 6 & 42 & -16\\
 -5 & 2 & 7 & -3 & & & & & & & &
\end{tabular}
\end{theorem}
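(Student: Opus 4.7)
The plan is to carry out the DAFC (Proposition \ref{DAFC}) in the two-dimensional model $\HQ^2$, adapted exactly as described for matrix algebras in Case I of Section~\ref{secappl}, and then invoke Poincar\'e's theorem to extract generators of a subgroup of finite index. Since $\Gamma_2(8\Z)$ is a discrete Fuchsian subgroup of $\PSL_2(\R)$ of finite covolume (it is a congruence subgroup of $\PSL_2(\Z)$), the algorithm is guaranteed to terminate.

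First I would parameterize the group as in Lemma~\ref{eqcong8}: writing each $\gamma=1+8\widehat\gamma \in \Gamma_2(8\Z)$ in terms of the integer quadruple $(a,b,c,d)$ gives $\|\gamma\|^2 = 2 + 64 n$, where $n = \|\widehat\gamma\|^2 - 2\det(\widehat\gamma)$ is a nonnegative integer, subject to the linear-in-$\widehat\gamma$ constraint $\tr(\widehat\gamma)+8\det(\widehat\gamma)=0$. This reduces the enumeration in each shell $\|\gamma\|^2 = 2 + 64n$ to listing a finite set of integer solutions, so the sequence $r_n = 2 + 64 n$ can serve as the ordering needed for the recursive sets $\F_n$.

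Next I would locate the ``starting bisectors''. As explained before the statement, an element of $\Gamma_2(8\Z)$ whose bisector contains the point $i$ must fix $i$, and the only such elements have the form $\gamma_b = \begin{pmatrix}1 & 8b \\ 0 & 1\end{pmatrix}$, with bisector the vertical line $x + 4b = 0$ in the upper half-plane. The two smallest-norm such elements ($b=\pm 1$) bound the vertical strip $|x| \leq 4$, which is exactly the image under projection of the fundamental domain of $\Gamma_\infty$. Using the symmetries from Proposition~\ref{symmetries} (transferred to $\HQ^2$ as in the Remark after that proposition): $\sigma^2$ is the reflection in the imaginary axis and $\phi$ is the inversion in $S^1$. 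Under the action of $\langle \sigma^2,\phi \rangle$ the strip decomposes into four congruent fundamental pieces, so it suffices to cover one quadrant, say $\{0 \leq x \leq 4,\ |z| \leq 1\}$, by interiors of bisectors coming from the set $X$.

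Finally I would run the DAFC: at each stage compute the bisectors of all $\gamma$ with $n(\gamma) = k$ (via the explicit formulas of Proposition~\ref{isogammaupmodel}), discard those whose half-plane is already contained in the union of previously found ones (Lemma~\ref{volume} guarantees no redundant elements are added), and continue until the target region has been exhausted. The computation terminates at $n = 2304$, i.e.\ $\|\gamma\|^2 \leq 2+64\cdot 2304$, producing precisely the nineteen matrices listed in $X$. By Poincar\'e's theorem (Theorem~\ref{Poincare}) the resulting polygon is a fundamental domain for the group $\tilde\Gamma_2(8\Z) = \langle g(\gamma) \mid g \in \langle \sigma^2, \phi\rangle,\ \gamma \in X\rangle$, and its image in $\Gamma_2(8\Z)$ has finite hyperbolic area; since $\Gamma_2(8\Z)$ itself has finite coarea, $[\Gamma_2(8\Z) : \tilde\Gamma_2(8\Z)]$ is finite. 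The only genuinely delicate point is the bookkeeping needed to certify exhaustion of the quadrant up to the ideal vertices on the real line; this is the step that requires the Mathematica-assisted enumeration rather than a closed-form argument.
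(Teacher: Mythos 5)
Your proposal reproduces the paper's own argument: parameterize $\Gamma_2(8\Z)$ via the Diophantine system of Lemma~\ref{eqcong8}, identify the elements $\left(\begin{smallmatrix}1&8b\\0&1\end{smallmatrix}\right)$ whose vertical-line bisectors $x+4b=0$ bound the strip $|x|\leq 4$, then run the DAFC (using the symmetries $\sigma^2$ and $\phi$ to cut down the region to be covered) until the boundary interval is exhausted at $n=2304$, which produces the nineteen matrices of $X$ and yields finite index by Poincar\'e's theorem. This is essentially the same approach as the paper, including the reliance on the Mathematica-assisted enumeration for the covering step.
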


\begin{figure}[H]
\centering
\includegraphics[scale=0.75]{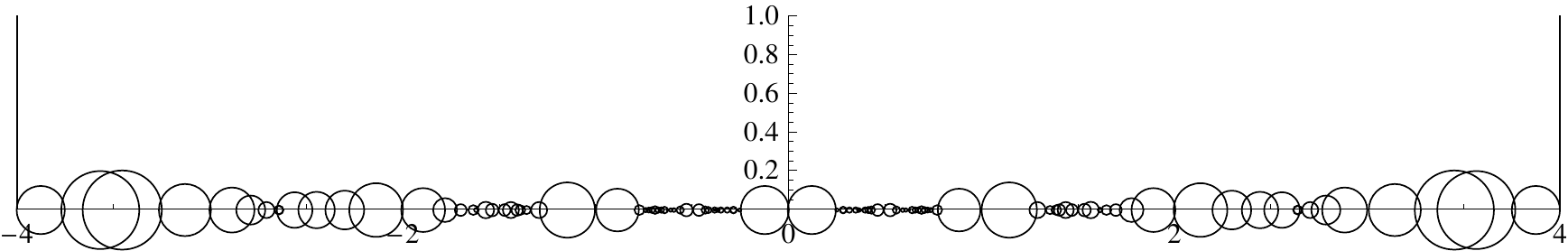}
\caption{Fundamental domain of $\tilde{\Gamma}_2(8\Z )$.}
\label{cong8}
\end{figure}

We now move on to the groups $\Gamma_{2}(2\Z[\sqrt{-2}])$ and $\Gamma_{2}(2\Z[i])$.
The defining system of Diophantine equations to describe the groups (and to obtain generators for these  groups) in the next theorem are exactly the same as in Lemma \ref{eqcong8}, i.e.,

\begin{eqnarray}
 \begin{cases}
\|\hat{\gamma}\|^2-2\det (\hat{\gamma})=n \\
\tr(\hat{\gamma})+n_0\det (\hat{\gamma})=0\\
\| \gamma\|^2= 2+ n_0^2n, n\in  \N
\end{cases}
 \end{eqnarray}
only the entries of $\hat{\gamma}$ are now in $\Z [\sqrt{-d}]$. This is the case for any congruence subgroup which we still have to deal with.

\begin{theorem}
\begin{enumerate}
 \item The subgroup ${\tilde{\Gamma}}_{2}(2\Z [\sqrt{-2}])=\langle -1, g(X_1)\ |\ g\in \langle \sigma^2\circ\tau , \phi \rangle \rangle <\Gamma_{2}(2\Z[\sqrt{-2}])$, where $X_1$ is given below, is of finite index .

\item The subgroup ${\tilde{\Gamma}}_{2}(2\Z [i])=\langle -1,  g(X_2) \ |\ g\in \langle \sigma^2\circ\tau , \phi \rangle \rangle<\Gamma_{2}(2\Z[i])$,where $X_2$ is given below, is of finite index .
\end{enumerate}

\begin{eqnarray*}
X_1 &=& \left\lbrace
\begin{pmatrix}
1 & -2\\
0 & 1
\end{pmatrix},
\begin{pmatrix}
1 & -2\sqrt{-2}\\
0 & 1
\end{pmatrix},
\begin{pmatrix}
-1 & 0\\
2 & -1
\end{pmatrix},
\begin{pmatrix}
-1 & 0\\
-2\sqrt{-2} & -1
\end{pmatrix},
\begin{pmatrix}
-3 & 2\sqrt{-2}\\
-2\sqrt{-2} & -3
\end{pmatrix}, \right. \\
&& \begin{pmatrix}
1-2\sqrt{-2} & 2\sqrt{-2}\\
-2\sqrt{-2} & -1-\sqrt{-2}
\end{pmatrix},
%\end{equation*}
%\begin{equation*}
\begin{pmatrix}
-1+2\sqrt{-2} & 2\\
4 & -1-\sqrt{-2}
\end{pmatrix},
\begin{pmatrix}
-1+2\sqrt{-2} & 4\\
2 & -1-\sqrt{-2}
\end{pmatrix},            \\
&&\begin{pmatrix}
-3+2\sqrt{-2} & 4\\
4 & -3-2\sqrt{-2}
\end{pmatrix},
\begin{pmatrix}
-3-2\sqrt{-2} & 2\sqrt{-2}\\
-4\sqrt{-2} & -3+2\sqrt{-2}
\end{pmatrix},\\
&&\left. \begin{pmatrix}
-3-2\sqrt{-2} & 4\sqrt{-2}\\
-2\sqrt{-2} & -3+2\sqrt{-2}
\end{pmatrix},
\begin{pmatrix}
5-2\sqrt{-2} & 4\sqrt{-2}\\
-4\sqrt{-2} & 5+2\sqrt{-2}
\end{pmatrix}
\right\rbrace ,
\end{eqnarray*}

\begin{eqnarray*}
X_2= \left\{
\begin{pmatrix}
1 & -2\\
0 & 1
\end{pmatrix},
\begin{pmatrix}
1 & -2i\\
0 & 1
\end{pmatrix},
\begin{pmatrix}
-1 & 0\\
-2i & -1
\end{pmatrix},
\begin{pmatrix}
-1 & 0\\
2 & -1
\end{pmatrix},
\begin{pmatrix}
-1+2i & 2\\
2 & -1-2i
\end{pmatrix},
\begin{pmatrix}
-1-2i & 2i\\
-2i & -1+2i
\end{pmatrix}
\right\}.
\end{eqnarray*}

\end{theorem}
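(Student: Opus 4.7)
The plan is to apply the DAFC (Proposition~\ref{DAFC}) in the non-cocompact matrix-algebra setting of Section~\ref{secappl}, imitating the treatment of $\Gamma_{2}(8\Z)$ and of the Bianchi groups. Both $\Gamma_{2}(2\Z[\sqrt{-2}])$ and $\Gamma_{2}(2\Z[i])$ are finite-index subgroups of the Bianchi groups $\PSL_{2}(\Z[\sqrt{-2}])$ and $\PSL_{2}(\Z[i])$ respectively, hence discrete Kleinian subgroups of $\PSL_{2}(\C)$ of finite covolume, so the algorithm applies and is guaranteed to terminate. I would parametrise elements using the system of Diophantine equations displayed just before the theorem, with $n_{0}=2$: writing $\gamma = I + 2\hat\gamma$ with $\hat\gamma\in M_{2}(\mathcal{O})$, $\mathcal{O}\in\{\Z[\sqrt{-2}],\Z[i]\}$, the conditions $\det\gamma =1$ and $\|\gamma\|^{2}=2+4n$ yield, for each $n\in\N$, a finite and explicitly enumerable list of candidate matrices, exactly as in the $\Gamma_{2}(8\Z)$ case.

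Next I would set up the geometric data needed to run the algorithm. First, the stabiliser $\Psi(\Gamma)_{0}$ (equivalently $\Gamma_{j}$) is trivial in $\PSL_{2}$: if $\gamma = I+2\hat\gamma\in\SU_{2}(\C)$ then $|1+2a|^{2}+|2c|^{2}=1$ forces, in either $\Z[\sqrt{-2}]$ or $\Z[i]$, that $\hat\gamma\in\{0,-I\}$, so in $\PSL_{2}$ only the identity remains. Hence we are in Case I of Section~\ref{secappl}. The stabiliser $\Gamma_{\infty}$ in the upper half-space model consists of the translations $z\mapsto z+2\beta$ with $\beta\in\mathcal{O}$, so a fundamental domain $\F_{\infty}$ has base in $\partial\HQ^{3}$ the rectangle with vertices $\pm 1\pm\sqrt{-2}$ (resp.\ $\pm 1\pm i$), and the bisectors through $j$ (by Lemma~\ref{controlfundom}, elements with $c=0,\,|a|=1$) are precisely the four vertical planes bounding this rectangle. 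To shorten the algorithm I would then invoke the two symmetries $\sigma^{2}\circ\tau$ (acting on $\partial\HQ^{3}$ as $z\mapsto -\bar z$) and $\phi$ (inversion in $S^{2}$) from Proposition~\ref{symmetries}; both normalise the congruence subgroup since $\mathcal{O}$ is closed under complex conjugation and the matrix conjugations defining $\sigma^{2}$ and $\delta$ preserve the congruence condition modulo $2$.

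With these preparations, running the DAFC amounts to enumerating units $\gamma$ by increasing $\|\gamma\|^{2}=2+4n$ through the Diophantine system, and, for each $n$, checking whether the interiors of the Euclidean hemispheres $\Sigma_{\gamma}$, taken modulo the symmetry group $\langle\sigma^{2}\circ\tau,\phi\rangle$, cover the base of $\F_{\infty}$ minus the ideal vertex $j$. By Proposition~\ref{DAFC} this process terminates at some finite $N$; a (Mathematica-based) implementation then shows that the explicit matrices listed in $X_{1}$ (resp.\ $X_{2}$), together with $-I$ and the action of $\langle\sigma^{2}\circ\tau,\phi\rangle$, produce a set of side-pairing transformations of the resulting polyhedron. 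Poincaré's theorem (Theorem~\ref{Poincare}) then guarantees that $\tilde\Gamma_{2}(2\mathcal{O})$ generated by these matrices is of finite index in $\Gamma_{2}(2\mathcal{O})$.

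The main obstacle, as in the non-cocompact examples already treated in the paper, is the combinatorial/computational step: bounding $n$ so that the finite search certainly suffices, enumerating the (finitely many) solutions of the Diophantine system up to that bound, and verifying that the corresponding hemispheres do cover the required fundamental region modulo symmetries. Once this is carried out, the generators fall out explicitly as the finite lists $X_{1}$ and $X_{2}$ recorded in the statement.
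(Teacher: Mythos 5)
Your proposal is correct and follows essentially the same route as the paper: the paper reduces these two groups to the Diophantine system of Lemma~\ref{eqcong8} with entries now in $\Z[\sqrt{-d}]$, notes the trivial stabiliser and the translation subgroup fixing $\infty$, and then runs the DAFC computationally modulo the symmetry group $\langle \sigma^{2}\circ\tau,\phi\rangle$ to produce the lists $X_{1}$ and $X_{2}$, exactly as you describe. The only difference is that you spell out some verifications (triviality of $\Psi(\Gamma)_{0}$, the shape of $\F_{\infty}$, normalisation by the symmetries) that the paper leaves implicit.
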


A fundamental domain and its projection on $\partial \HQ^3$ is given below for the two groups above.

\begin{figure}[H]
\centering
\subfigure[${\tilde{\Gamma}_{2}(2\Z \lbrack \sqrt{-2}\rbrack)}$]{
\includegraphics[scale=0.35]{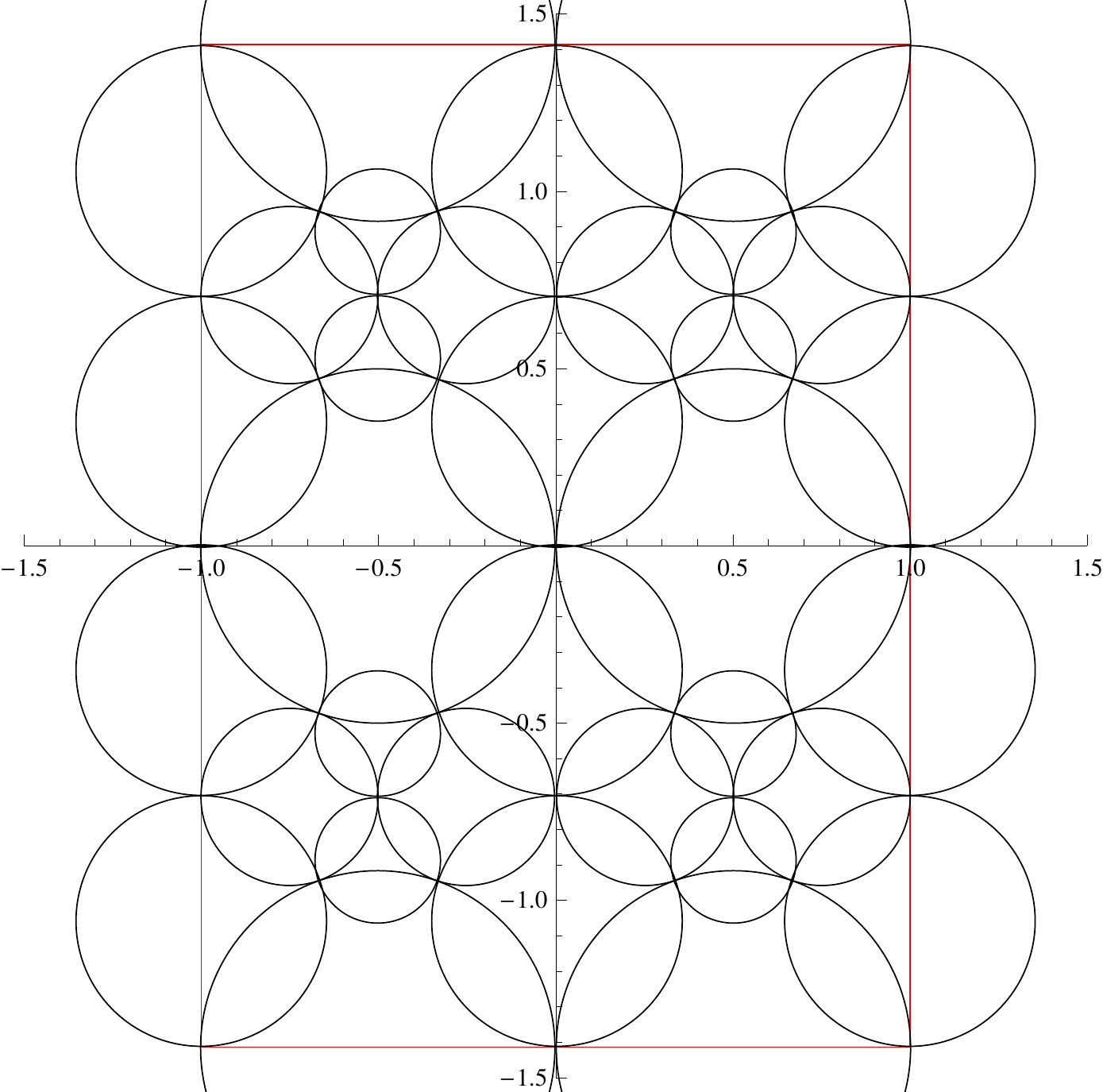}
\label{congruence2zsqrt-22dim}
}
\subfigure[$\tilde{\Gamma}_{2}(2\Z\lbrack\sqrt{-2}\rbrack)$]
{\includegraphics[scale=0.5]{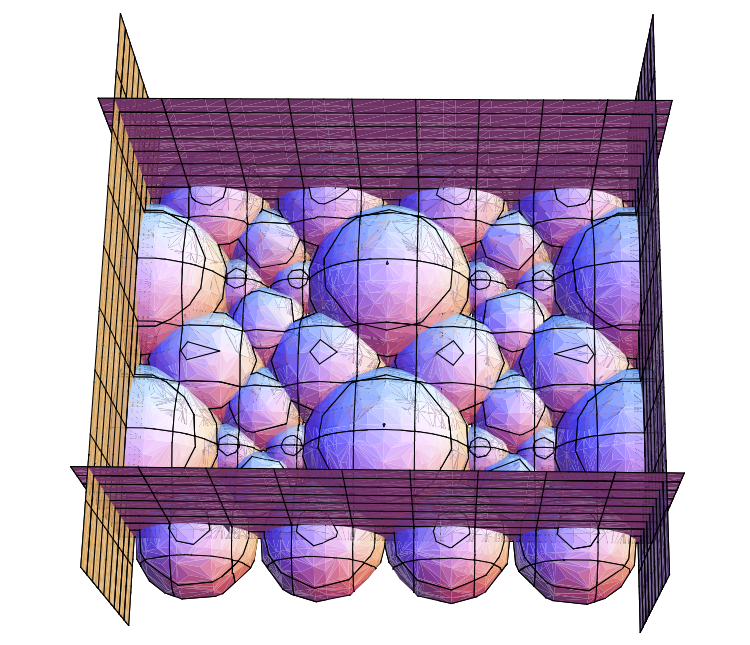}
\label{congruence2zsqrt-2} }
\caption{}
\end{figure}

\begin{figure}[H]
\centering
\subfigure[$\tilde{\Gamma}_{2}(2\Z\lbrack i\rbrack)$]{
\includegraphics[scale=0.35]{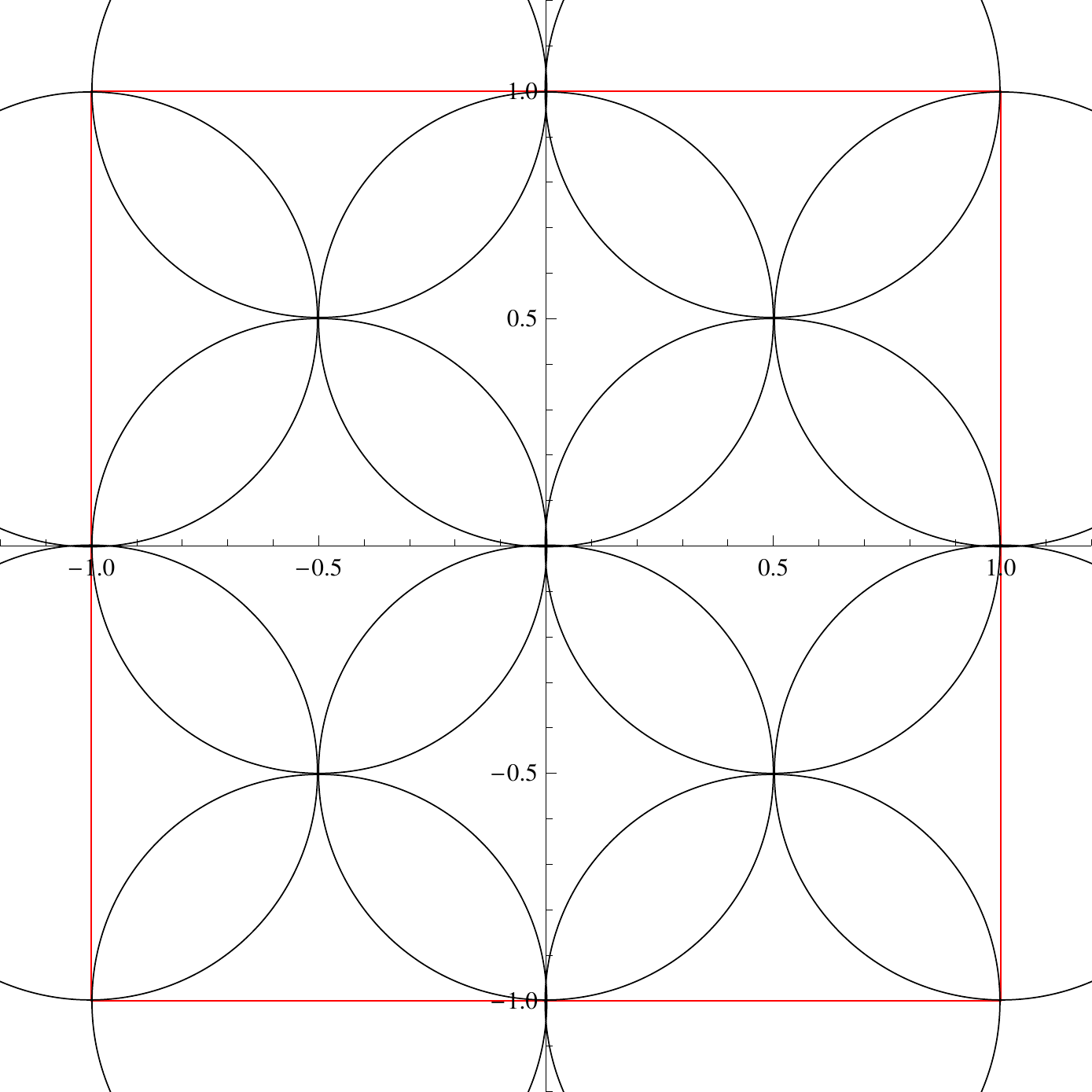}
\label{congruence2zi2dim}
}
\subfigure[$\tilde{\Gamma}_{2}(2\Z \lbrack i\rbrack)$]{
\includegraphics[scale=0.5]{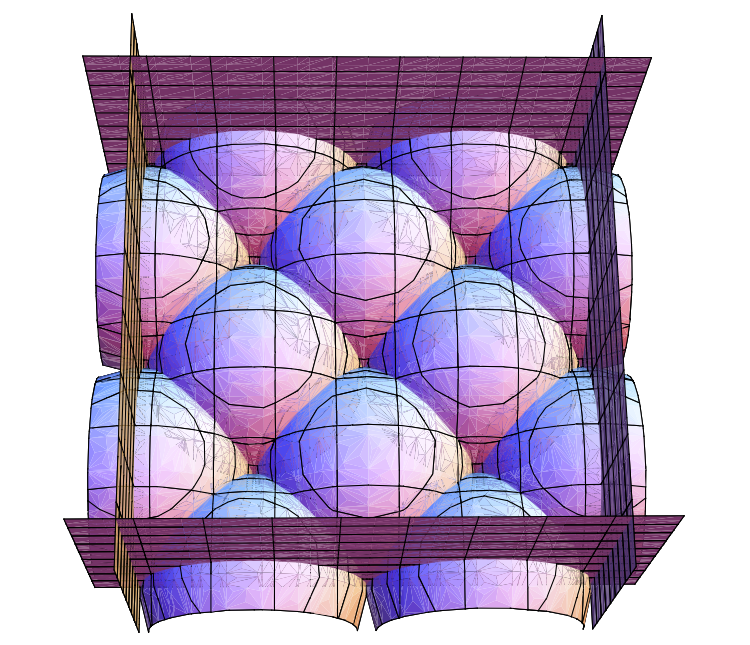}
\label{congruence2zi}
}
\caption{}
\end{figure}

\noindent {\bf{Acknowledgment.}} The second author is grateful to the Vrije Universiteit Brussel for its hospitality during his sabbatical year, while this research was being done. The third author would like to thank Fred Simons for his assistance with Mathematica.

%******Bibliography*****************

\nocite{*}
\bibliographystyle{abbrv}
\bibliography{KiJeJuSiSBiblio}

\vspace{.25cm}

\noindent Department of Mathematics, \newline Vrije
Universiteit Brussel,\newline Pleinlaan 2, 1050
Brussel, Belgium\newline
emails: efjesper@vub.ac.be and akiefer@vub.ac.be

\vspace{.25cm}

\noindent Instituto de Matem\'atica e Estat\'\i
stica,\newline Universidade de S\~ao Paulo
(IME-USP),\newline Caixa Postal 66281, S\~ao
Paulo,\newline CEP  05315-970 - Brasil \newline
email: ostanley@usp.br

\vspace{.25cm}

\noindent Departamento de Matematica\\
Universidade Federal da Paraiba\\
e-mail: andrade@mat.ufpb.br

\vspace{.25cm}

\noindent Escola de Artes, Ci\^encias e
Humanidades,\newline Universidade de S\~ao Paulo
(EACH-USP),\newline Rua Arlindo B\'ettio, 1000,
Ermelindo Matarazzo, S\~ao Paulo, \newline CEP
03828-000 - Brasil \newline email:
acsouzafilho@usp.br

\end{document}